\theoremstyle{plain}
\newtheorem{thm}{Theorem}
\newtheorem{lem}[thm]{Lemma}
\newtheorem{cor}[thm]{Corollary}
\newtheorem{prop}[thm]{Proposition}
\newtheorem{conj}[thm]{Conjecture}
\newtheorem*{mainthm}{Main Theorem}
\theoremstyle{definition}
\newtheorem*{defi}{Definition}
\newtheorem*{ex}{Example}
\newtheorem*{rmk}{Remark}
\newcommand{\cH}{\mathcal{H}}
\newcommand{\cV}{\mathcal{V}}
\newcommand{\cE}{\mathcal{E}}
\newcommand{\cI}{\mathcal{I}}
\newcommand{\cC}{\mathcal{C}}
\newcommand{\cL}{\mathcal{L}}
\newcommand{\cG}{\mathcal{G}}
\newcommand{\cP}{\mathcal{P}}
\newcommand{\N}{\mathbb{N}}
\newcommand{\R}{\mathbb{R}}
\newcommand{\comple}{\preccurlyeq}
\newcommand{\symd}{\mathop{\triangle}}
\newcommand{\wo}{\setminus}
\newcommand{\of}{\subseteq}
\DeclarePairedDelimiter\set{\{}{\}}
\DeclarePairedDelimiterX\setof[2]{\{}{\}}{#1\,:\,#2}
\DeclarePairedDelimiter\abs{\lvert}{\rvert}
\DeclarePairedDelimiter{\floor}{\lfloor}{\rfloor}
\DeclareMathOperator{\ind}{ind}
\definecolor{light-gray}{gray}{0.95}
\definecolor{medium-gray}{gray}{0.65}
\definecolor{dark-gray}{gray}{0.35}
\newcounter{x}
\newcounter{y}
\newcounter{z}
\newcommand\xaxis{210}
\newcommand\yaxis{-30}
\newcommand\zaxis{90}
\newcommand\topside[3]{
  \fill[fill=light-gray, draw=black,shift={(\xaxis:#1)},shift={(\yaxis:#2)},
  shift={(\zaxis:#3)}] (0,0) -- (30:1) -- (0,1) --(150:1)--(0,0);
}
\newcommand\leftside[3]{
  \fill[fill=medium-gray, draw=black,shift={(\xaxis:#1)},shift={(\yaxis:#2)},
  shift={(\zaxis:#3)}] (0,0) -- (0,-1) -- (210:1) --(150:1)--(0,0);
}
\newcommand\rightside[3]{
  \fill[fill=dark-gray, draw=black,shift={(\xaxis:#1)},shift={(\yaxis:#2)},
  shift={(\zaxis:#3)}] (0,0) -- (30:1) -- (-30:1) --(0,-1)--(0,0);
}
\newcommand\cube[3]{
  \topside{#1}{#2}{#3} \leftside{#1}{#2}{#3} \rightside{#1}{#2}{#3}
}
\newcommand\planepartition[1]{
 \setcounter{x}{-1}
  \foreach \a in {#1} {
    \addtocounter{x}{1}
    \setcounter{y}{-1}
    \foreach \b in \a {
      \addtocounter{y}{1}
      \setcounter{z}{-1}
      \foreach \c in {1,...,\b} {
        \addtocounter{z}{1}
        \cube{\value{x}}{\value{y}}{\value{z}}
      }
    }
  }
}
\title{Maximizing 2-Independent Sets in 3-Uniform Hypergraphs}
\author{L. Keough}
\author{A.J. Radcliffe}
\begin{document}

\maketitle

\begin{abstract}
There has been interest recently in maximizing the number of independent sets in graphs.  For example, the Kahn-Zhao theorem gives an upper bound on the number of independent sets in a $d$-regular graph.  Similarly, it is a corollary of the Kruskal-Katona theorem that the lex graph has the maximum number of independent sets in a graph of fixed size and order.  In this paper we solve two equivalent problems. 

The first is: what $3$-uniform hypergraph on a ground set of size $n$, having at least $t$ edges, has the most $2$-independent sets? Here a $2$--independent set is a subset of vertices containing fewer than $2$ vertices from each edge. This is equivalent to the problem of determining which graph on $n$ vertices having at least $t$ triangles has the most independent sets. The (hypergraph) answer is that, ignoring some transient and some persistent exceptions, a $(2,3,1)$-lex style $3$-graph is optimal.

We also discuss the problem of maximizing the number of $s$-independent sets in $r$-uniform hypergraphs of fixed size and order, proving some simple results, and conjecture an asymptotically correct general solution to the problem.
\end{abstract}

\section{Introduction}

For many years, there has been interest in finding the maximum size of a variety of sub-structures (such as independent sets or matchings) in a graph satisfying certain conditions.  In recent years, there has been increased interest in extremal questions about the \emph{number} of these sub-structures.   That is, rather than asking for the size of the largest independent set, one could ask which graph has the most independent sets, given some set of conditions.
In fact, many extremal problems for the the number of independent sets have been studied.  A classic example is the Kahn-Zhao theorem, proved initially by Kahn \cite{Kahn} in the bipartite case, and then extended to the general case by Zhao \cite{Zhao}.

\begin{thm}[Kahn-Zhao]\label{thm:KahnZhao}
    If $G$ is a $d$-regular graph then $\ind(G)$, the number of independent sets in $G$, satisfies
    \[
        \ind(G) \le \left(2^{d+1}-1\right)^{\frac{n}{2d}} = (\ind(K_{d,d}))^{\frac{n}{2d}}
    \]
    where $K_{d,d}$ is the complete balanced bipartite graph on $2d$ vertices.
\end{thm}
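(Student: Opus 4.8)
The plan is to take the route implicit in the two names: first prove the bound when $G$ is bipartite, by an entropy argument in the style of Kahn~\cite{Kahn}, and then deduce the general $d$-regular case from the bipartite one via the bipartite double cover, following Zhao~\cite{Zhao}. Bipartiteness is exactly what makes the entropy step run: it lets one split a uniformly random independent set into its two ``sides'' so that, once one side is revealed, the other is a product of independent coordinate choices.

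Concretely, suppose first that $G$ is bipartite with parts $A$ and $B$; since $G$ is $d$-regular, counting edges gives $d\abs A=\abs{E(G)}=d\abs B$, so $\abs A=\abs B=n/2$. Choose $\mathbf I$ uniformly among independent sets of $G$ and put $X=\mathbf I\cap A$, $Y=\mathbf I\cap B$, so that $H(\mathbf I)=\log_2\ind(G)$. The chain rule gives $H(\mathbf I)=H(Y)+H(X\mid Y)$. The conditional term is exact: once $Y=y$ is fixed, $X$ is uniform over all subsets of $A\setminus N(y)$, so $H(X\mid Y)=\sum_{a\in A}\Pr[N(a)\cap Y=\emptyset]$. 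For $H(Y)$ I would apply Shearer's entropy inequality to the cover of $B$ by the neighborhoods $\setof{N(a)}{a\in A}$; since $G$ is $d$-regular, every vertex of $B$ lies in exactly $d$ of these sets, so $H(Y)\le\frac1d\sum_{a\in A}H(Y\cap N(a))$. Adding the two bounds,
\[
    \log_2\ind(G)\;\le\;\sum_{a\in A}\left(\tfrac1d\,H(Y\cap N(a))+\Pr[N(a)\cap Y=\emptyset]\right).
\]

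Everything then reduces to a one-variable estimate for each summand. Writing $q_a=\Pr[N(a)\cap Y=\emptyset]$ and using that $Y\cap N(a)$, conditioned on being nonempty, takes at most $2^d-1$ values, one gets $H(Y\cap N(a))\le h(q_a)+(1-q_a)\log_2(2^d-1)$, with $h$ the binary entropy. So it remains to check, by a routine calculus argument, that
\[
    \tfrac1d\bigl(h(q)+(1-q)\log_2(2^d-1)\bigr)+q\;\le\;\tfrac1d\log_2\!\bigl(2^{d+1}-1\bigr)\qquad\text{for all }q\in[0,1],
\]
with equality when $q$ equals $q^\ast=2^d/(2^{d+1}-1)$, the fraction of independent sets of $K_{d,d}$ lying inside one fixed part. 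Summing over the $n/2$ vertices of $A$ yields $\log_2\ind(G)\le\frac{n}{2d}\log_2(2^{d+1}-1)$, i.e.\ $\ind(G)\le(2^{d+1}-1)^{n/(2d)}$, which is the bipartite case.

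For a general $d$-regular $G$ on $n$ vertices, I would pass to the bipartite double cover $G\times K_2$, with vertex set $V(G)\times\set{0,1}$ and $(u,0)$ adjacent to $(v,1)$ whenever $uv\in E(G)$; this graph is bipartite and $d$-regular on $2n$ vertices. The leverage comes from Zhao's inequality $\ind(G)^2\le\ind(G\times K_2)$: independent sets of $G\times K_2$ biject with pairs $(S,T)$ of vertex subsets of $G$ having no edge between them, and a short but ingenious combinatorial argument produces at least $\ind(G)^2$ such pairs from ordered pairs of independent sets of $G$. Applying the bipartite case to $G\times K_2$ gives $\ind(G)^2\le(2^{d+1}-1)^{2n/(2d)}$, and taking a square root finishes the proof. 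Of the two non-routine ingredients --- tuning the Shearer cover so that the per-vertex optimization lands exactly on $\ind(K_{d,d})$, and Zhao's lemma --- I expect the latter to be the main obstacle, since the bipartite bound was known long before the general one; following the equality cases through both steps should moreover pin down disjoint unions of copies of $K_{d,d}$ as the unique extremizers.
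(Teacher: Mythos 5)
The paper cites this theorem to \cite{Kahn} and \cite{Zhao} without giving a proof of its own, and your proposal is a correct reconstruction of exactly those two references: Kahn's Shearer-entropy argument for the bipartite case (including the correct stationary point $q^\ast = 2^d/(2^{d+1}-1)$, which indeed makes the per-vertex optimum equal $\tfrac1d\log_2(2^{d+1}-1)$), followed by Zhao's reduction via the bipartite double cover and the inequality $\ind(G)^2\le\ind(G\times K_2)$. Since the paper relies on the same two sources, there is no divergence to report.
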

In particular, if $2d$ divides $n$, the $d$-regular graph with the most independent sets is a disjoint union of complete balanced bipartite graphs.  In a different vein, one could consider the independent set maximization problem for graphs having $n$ vertices and $e$ edges.    It has been shown (see, e.g.,  \cite{IndLex}) that the Kruskal-Katona Theorem \cite{Kruskal, Katona} implies that the lex graph, $\cL(n,e)$, has the greatest number of independent sets among graphs having $n$ vertices and $e$ edges.  The \emph{lex graph}, $\cL(n,e)$ is the graph that has vertex set $[n]$ and edge set the first $e$ sets in the lex (or dictionary) order, $<_L$, on $\binom{[n]}{2}$.  %Recall the lex order is given by  $A<_L B$ if $\min(A\Delta  B)\in A$ where $A\Delta B$ is the symmetric difference of $A$ and $B$.

It is natural to try to extend these extremal results for the number of independent sets to hypergraphs.  A \emph{hypergraph} $\cH$ is an ordered pair $(\cV(\cH), \cE(\cH))$ where $\cV(\cH)$ is a vertex set and $\cE(\cH)$ is a set of edges where each edge is a subset of $\cV(\cH)$. Typically we abuse notation and refer to a hypergraph as its edge set, writing, for example, $E\in\mathcal{H}$ to mean $E \in \cE(\cH)$ and $\mathcal{H} + E$ to mean $(\cV(\cH),\cE(\cH)\cup \{E\})$.  A hypergraph is \emph{$r$-uniform} if all edges have size $r$.  For convenience we'll often call an $r$-uniform hypergraph an \emph{$r$-graph}.

In a graph, an independent set is a subset of vertices containing at most one vertex from each edge.  In an $r$-graph for $r>2$, it makes sense to consider allowing more than one vertex from the independent set to be in each edge.  

\begin{defi}
	For an $r$-graph $\cH = (\cV,\cE)$ and an integer $s$ with $1\leq s\leq r$, a set $I\subset\cV$ is \emph{$s$-independent} if $\lvert I\cap E\rvert<s$ for all $E\in \cE$.  We let $\cI_{s}(\cH)$ denote the set of $s$-independent sets of a hypergraph $\cH$ and set $i_s(\cH) = \abs{\cI_s(\cH)}$.  
\end{defi}  

There has been some research on independent sets in hypergraphs, mostly focused on determining algorithms for finding independent sets in hypergraphs (see, e.g., \cite{CountingIndSets}) or on finding the independent set of largest size (see, e.g., \cite{IndSetsInHypergraphs}).  However, some extremal questions about the number of independent sets in hypergraphs have been addressed. In \cite{HypergraphIndSets} Cutler and Radcliffe give an asymptotically best possible upper bound on the number of $s$-independent sets in an $r$-uniform hypergraph of fixed size and order.  Since they use a version of the hypergraph regularity lemma, their results only apply to graphs with a large number of vertices. 

It is also the case that maximizing $1$-independent sets and $r$-independent sets in $r$-uniform hypergraphs with $n$ vertices and $e$ edges is straightforward. Defining the \emph{lex $r$-graph} $\cL_r(n,e)$ to be the $r$-graph with vertex set $[n]$ and edge set the first $e$ sets in the lex ordering\footnote{The lex ordering, $<_L$, on $\binom{[n]}{r}$ is defined by $A<_L B$ if and only if $\min\{A\Delta B\}\in A$.}, on $\binom{[n]}{r}$, the Kruskal-Katona Theorem implies the following:
\begin{thm}\label{thm:Sind}
Let $i_r(\cH)$ be the number of $r$-independent sets in $\cH$.
If $\cH$ is an $r$-uniform hypergraph with $n$ vertices and $e$ edges then
$$i_r(\cH)\leq i_r(\cL_r(n,e)).$$
\end{thm}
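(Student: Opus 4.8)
The plan is to pass to complements and reduce Theorem~\ref{thm:Sind} to the Kruskal--Katona theorem. Since every edge of $\cH$ has size exactly $r$, a set $I \of [n]$ is $r$-independent for $\cH$ if and only if it contains no edge of $\cH$. Writing $\overline{X} = [n] \wo X$, the equivalence $E \of I \iff \overline{I} \of \overline{E}$ shows that $I$ contains some edge of $\cH$ precisely when $\overline{I}$ lies in the down-closure $\langle \overline{\cE} \rangle^{\downarrow}$ of the complemented edge family $\overline{\cE} = \setof{\overline{E}}{E \in \cE} \of \binom{[n]}{n-r}$. As $X \mapsto \overline{X}$ is a bijection of $2^{[n]}$, this gives
\[
    i_r(\cH) \;=\; 2^n - \abs{\langle \overline{\cE} \rangle^{\downarrow}},
\]
so maximizing $i_r(\cH)$ over $r$-graphs on $n$ vertices with $e$ edges is exactly the problem of choosing an $e$-element family of $(n-r)$-subsets of $[n]$ whose down-closure is as small as possible.

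The down-closure decomposes by levels, $\langle \cG \rangle^{\downarrow} = \bigcup_{j \ge 0} \partial^{(j)} \cG$ (disjointly), where $\partial^{(j)}$ denotes the $j$-fold shadow; hence $\abs{\langle \cG \rangle^{\downarrow}} = \sum_{j \ge 0} \abs{\partial^{(j)} \cG}$. This is precisely the quantity controlled by the Kruskal--Katona theorem in its iterated form: the initial segment of the colex order simultaneously minimizes every iterated shadow among subfamilies of $\binom{[n]}{n-r}$ of a given size (one iterates the basic shadow inequality using the fact that the shadow of a colex initial segment is again a colex initial segment). Therefore, among all $e$-element families in $\binom{[n]}{n-r}$, the down-closure is smallest for the colex initial segment $\cC$, and the extremal $r$-graph is the one whose complemented edge family equals $\cC$.

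It remains to show that this extremal $r$-graph is $\cL_r(n,e)$ itself, and not just an isomorphic copy. I would use two order-theoretic facts: the complementation map $A \mapsto \overline{A}$ from $\binom{[n]}{r}$ to $\binom{[n]}{n-r}$ reverses the colex order, and the relabeling $\rho \colon i \mapsto n+1-i$ sends the lex order on $\binom{[n]}{r}$ to the reverse of the colex order. Combining them, $A \mapsto \overline{\rho(A)}$ maps the lex initial segment $\cL_r(n,e)$ of $\binom{[n]}{r}$ exactly onto the colex initial segment $\cC$ of $\binom{[n]}{n-r}$. The maps $X \mapsto \overline{\rho(X)}$ are exactly the inclusion-reversing bijections of $2^{[n]}$, so the entire argument above goes through verbatim with $X \mapsto \overline{\rho(X)}$ in place of $X \mapsto \overline{X}$; now it is $\cL_r(n,e)$ whose transformed complemented edge family is $\cC$, and hence $\cL_r(n,e)$ that attains the maximum of $i_r$ over all $r$-graphs on $n$ vertices with $e$ edges. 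That is the statement of Theorem~\ref{thm:Sind}.

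I expect the main difficulty to be purely organizational: keeping straight, among the lex order, the colex order, and their reverses, which is the natural optimizer for shadows versus upper shadows, and then confirming that the extremal family is literally $\cL_r(n,e)$ rather than an isomorphic relabeling of it. The mathematical content is entirely imported from Kruskal--Katona in its iterated-shadow form, and the degenerate cases $e = 0$ and $r = n$ are immediate.
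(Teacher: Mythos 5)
Your proof is correct and is essentially the argument the paper has in mind: the paper states Theorem~\ref{thm:Sind} as an immediate consequence of Kruskal--Katona and gives no further details, and your reduction (complement to turn $r$-independence into a down-closure, minimize that via iterated Kruskal--Katona, and identify the colex initial segment of $\binom{[n]}{n-r}$ with $\cL_r(n,e)$ via the order-reversing relabeling) is the standard way to flesh that out. One small notational mismatch: the paper uses the ground set $[n]=\set{0,\dots,n-1}$, so the reversal should be $i\mapsto n-1-i$ rather than $i\mapsto n+1-i$.
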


The \emph{colex $r$-graph} $\cC_r(n,e)$ is the $r$-graph with vertex set $[n]$ and edge set the first $e$ sets in the colex order\footnote{The colex ordering, $<_L$, on $\binom{[n]}{r}$ is defined by $A<_L B$ if and only if $\max\{A\Delta B\}\in B$.}, on $\binom{[n]}{r}$.

\begin{thm}\label{thm:1ind}
If $\cH$ is an $r$-graph on $n$ vertices with $e$ edges then
$$i_1(\cH) \leq i_1(\cC_r(n,e))$$
\end{thm}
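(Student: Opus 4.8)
The plan is to reduce this to a simple covering computation; in particular, no Kruskal--Katona input is needed here. First observe that a set $I$ is $1$-independent in $\cH$ exactly when $\abs{I\cap E}=0$ for every edge $E$, that is, when $I$ is disjoint from the set $C(\cH):=\bigcup_{E\in\cE(\cH)}E$ of \emph{covered} vertices. Hence $\cI_1(\cH)$ is precisely the power set of $\cV(\cH)\wo C(\cH)$, so
\[
    i_1(\cH)=2^{n-\abs{C(\cH)}},
\]
and maximizing $i_1$ over $r$-graphs on $[n]$ with exactly $e$ edges is the same as minimizing $\abs{C(\cH)}$.

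Next I would establish the matching lower bound on $\abs{C(\cH)}$. Since the $e$ edges are distinct $r$-subsets of $C(\cH)$, we have $\binom{\abs{C(\cH)}}{r}\ge e$, and hence $\abs{C(\cH)}\ge m^{\ast}$, where $m^{\ast}:=\min\setof{m}{\binom{m}{r}\ge e}$. Therefore $i_1(\cH)\le 2^{n-m^{\ast}}$ for every such $\cH$.

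Finally I would check that $\cC_r(n,e)$ attains equality, i.e. $\abs{C(\cC_r(n,e))}=m^{\ast}$. Assuming $e\ge 1$ (the case $e=0$ is trivial), write $\binom{m}{r}\le e<\binom{m+1}{r}$, so that $m^{\ast}=m$ if $e=\binom{m}{r}$ and $m^{\ast}=m+1$ otherwise. The initial segment of length $\binom{m}{r}$ of the colex order on $\binom{[n]}{r}$ is exactly $\binom{[m]}{r}$, and the next $\binom{m}{r-1}=\binom{m+1}{r}-\binom{m}{r}$ sets in colex order are those with largest element $m+1$, namely $\setof{\{m+1\}\cup S}{S\in\binom{[m]}{r-1}}$. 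Because $e-\binom{m}{r}<\binom{m}{r-1}$, the edge set of $\cC_r(n,e)$ is all of $\binom{[m]}{r}$ together with some (possibly none) sets of this second kind; hence $C(\cC_r(n,e))=[m]$ when $e=\binom{m}{r}$ and $C(\cC_r(n,e))=[m+1]$ otherwise. In both cases $\abs{C(\cC_r(n,e))}=m^{\ast}$, so $i_1(\cC_r(n,e))=2^{n-m^{\ast}}$, which matches the upper bound.

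I do not expect a genuine obstacle. The only step needing a line of care is the description of an initial segment of the colex order on $\binom{[n]}{r}$, which is routine; the standing assumption $e\le\binom{n}{r}$ (needed for $\cC_r(n,e)$ to be defined) and the degenerate case $e=0$ are both immediate. The entire content of the theorem is that $1$-independent sets see only the covered vertices, combined with the fact that the colex order packs the $e$ edges onto as few vertices as possible.
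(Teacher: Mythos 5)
Your proposal is correct and is essentially the paper's own argument: the paper proves Theorem \ref{thm:1ind} via Lemma \ref{lem:IsolatesAre1ind}, observing that $i_1(\cH)=2^{s(\cH)}$ where $s(\cH)$ counts isolated vertices, and that $e\le\binom{n-m}{r}$ whenever $s(\cH)\ge m$, with colex attaining the extremal value. You phrase this complementarily in terms of the covered set $C(\cH)$ and spell out the colex verification in more detail (modulo the paper's convention $[n]=\{0,\dots,n-1\}$, which shifts your indices by one), but the underlying argument is the same.
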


This theorem follows immediately from the simple lemma below. 

\begin{lem}\label{lem:IsolatesAre1ind}
	For a hypergraph $\cH$ let $S(\cH)$ be the set of isolated vertices in $\cH$, and  let $s(\cH) = |S(\cH)|$.
	\begin{enumerate}
		\item $i_1(\cH)= 2^{s(\cH)}$.
		\item If $\cH\in \cH_r(n,e)$ then $s(\cH) \le s(\cC_r(n,e))$.
	\end{enumerate}
\end{lem}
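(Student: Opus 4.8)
The plan is to establish the two parts in turn; both are essentially immediate from unwinding the definitions.

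\textbf{Part (1).} A set $I \of \cV(\cH)$ is $1$-independent precisely when $\abs{I \cap E} < 1$, i.e.\ $\abs{I \cap E} = 0$, for every $E \in \cH$. Equivalently, $I$ is disjoint from every edge, which happens exactly when $I$ contains no non-isolated vertex, i.e.\ $I \of S(\cH)$. Hence $\cI_1(\cH)$ is exactly the family of all subsets of $S(\cH)$, and so $i_1(\cH) = \abs{\cI_1(\cH)} = 2^{s(\cH)}$.

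\textbf{Part (2).} Write $U = \bigcup_{E \in \cH} E$ for the set of non-isolated vertices of $\cH$ and put $m = \abs{U}$, so $s(\cH) = n - m$. Every edge of $\cH$ is an $r$-subset of $U$, hence $e \le \binom{m}{r}$. If $e = 0$ every vertex is isolated in both $\cH$ and $\cC_r(n,e)$, so assume $e \ge 1$ and let $k$ be the least integer with $\binom{k}{r} \ge e$; then $r \le k \le m \le n$. By definition of the colex order, the first $\binom{k}{r}$ sets of $\binom{[n]}{r}$ are exactly the members of $\binom{[k]}{r}$, so all $e$ edges of $\cC_r(n,e)$ lie inside $[k]$ and thus $\cC_r(n,e)$ has at most $k$ non-isolated vertices. (In fact, since $\binom{k-1}{r} < e$ by minimality of $k$, at least one edge of $\cC_r(n,e)$ meets $\set{k}$, and when $k > r$ every vertex of $[k-1]$ lies in some member of $\binom{[k-1]}{r}$, which is among the first $e$ edges; so the non-isolated set is exactly $[k]$.) Therefore $s(\cC_r(n,e)) = n - k \ge n - m = s(\cH)$.

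There is no genuine obstacle: the argument uses only the trivial bound $e \le \binom{m}{r}$ for a hypergraph whose edges span $m$ vertices, together with the standard fact that the first $\binom{k}{r}$ sets in colex order on $\binom{[n]}{r}$ are precisely $\binom{[k]}{r}$. The only mild care needed is in handling the degenerate case $e = 0$ and, if one wants the exact description of the support of $\cC_r(n,e)$, the boundary case $k = r$; neither causes any difficulty.
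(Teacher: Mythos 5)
Your proof is correct and follows essentially the same approach as the paper's: part (1) unwinds the definition of $1$-independence to see that $\cI_1(\cH)$ is the power set of $S(\cH)$, and part (2) uses the elementary bound $e \le \binom{m}{r}$ on the number of edges supported on $m$ vertices, together with the observation that $\cC_r(n,e)$ concentrates its edges on the smallest possible support. The paper phrases part (2) more compactly as $s(\cH) \le \max\setof{m}{e \le \binom{n-m}{r}}$ with $\cC_r(n,e)$ achieving this maximum, whereas you spell out the same inequality by introducing $k$ explicitly; the extra parenthetical on the exact support of $\cC_r(n,e)$ is correct but not needed, since the inequality $s(\cC_r(n,e)) \ge n-k$ already suffices.
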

\begin{proof}
For the first, note that a set $D$ is $1$-independent in a hypergraph $\cH$ if and only if $|A\cap E|<1$ for all $E\in \cE(\cH)$, i.e. $A\subseteq S(\cH)$.  Thus $i_1(\cH) = 2^{s(\cH)}$. For the second, note that trivially $s(\cH)\ge m$ requires $e\le \binom{n-m}{r}$, so
\[
	s(\cH) \le \max\setof[\big]{m}{e\le \binom{n-m}{r}}.
\]
On the other hand $\cC_r(n,e)$ achieves the bound on the right.
\end{proof}

\begin{rmk}
If $e$ is not of the form $\binom{k}{r}$ for any $k$ then there are many graphs having the same number of isolated vertices as the colex graph.  In fact, if $\binom{k-1}{r} <e<\binom{k}{r}$ then any $e$-subset of $\binom{K}{r}$ for $K$ a $k$-set has the maximum number of isolated vertices.
\end{rmk}

\subsection{Our problem} % (fold)
\label{sub:our_problem}

The problem we consider in this paper can be phrased in two ways. If we write $\cH_{r}(n,e)$ for the family of $r$-uniform hypergraphs with $n$ vertices and $e$ edges, then  from one perspective we are are determining
\[
    \max\setof[\big]{i_2(\cH)}{\cH \in \cH_3(n,e)}
\]
for all values of $n$ and $m$. The other perspective is a graph-theoretic one. If $\cH$ is a $3$-uniform hypergraph on vertex set $V$ we can consider the graph $G = \partial_2\cH$ with edge set
\[
	E(G) = \setof[\Big]{xy \in \binom{V}{2}}{\exists\, F\in E(\cH)\ \text{s.t.\ } xy \of F}.
\]
A set $I\of V$ is $2$-independent in $\cH$ if and only if does not overlap with any edge of $\cH$ in at least $2$ vertices. But this is precisely the same as requiring that $I$ is an independent set of $G$. Each edge of $\cH$ gives a triangle in $G$ (though not necessarily \emph{vice versa}). From this perspective we are trying to determine
\[
    \max\setof[\big]{i(G)}{n(G)=n,\ k_3(G)\ge e},
\]
where we write $k_3(G)$ for the number of triangles in $G$. For completeness we carefully prove the equivalence of these two problems. 
\begin{lem}
	For all $n,m\in \N$ we have
	\begin{multline*}
		\max\setof{i_2(\cH)}{\text{$\cH$ is a $3$-uniform hypergraph on vertex set $[n]$ with $e(\cH)=e$}} \\
			= \max\setof{i(G)}{\text{$G$ is a graph on vertex set $[n]$ with $k_3(G)\ge e$}}.
	\end{multline*}
\end{lem}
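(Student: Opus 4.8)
The plan is to prove the two maxima are equal by showing each side is at most the other. The key observation is that for a 3-uniform hypergraph $\cH$ on $[n]$, the graph $G = \partial_2\cH$ satisfies $\cI_2(\cH) = \cI(G)$ (this was essentially argued in the text: $I$ is 2-independent in $\cH$ iff $I$ contains no edge of $G$, since the edges of $G$ are exactly the pairs covered by some $F \in \cE(\cH)$). Moreover every edge $F$ of $\cH$ contributes a triangle on $\binom{F}{2}$ in $G$, so $k_3(G) \ge$ (number of \emph{distinct} triples whose pairs all lie in some hyperedge) $\ge e(\cH)$ — wait, this needs care, since two different hyperedges might give the same triangle, or a triangle might appear in $G$ that does not come from a single hyperedge. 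Actually all we need is $k_3(\partial_2\cH) \ge e(\cH)$, and this holds because the map $F \mapsto \binom{F}{2}$ sends distinct 3-element hyperedges to distinct triangles of $G$. Hence for each $\cH \in \cH_3(n,e)$ we get a graph $G$ on $[n]$ with $k_3(G) \ge e$ and $i(G) = i_2(\cH)$, which gives the inequality ``$\le$'' (LHS $\le$ RHS).

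For the reverse inequality, start with a graph $G$ on $[n]$ with $k_3(G) \ge e$, and let $\cH$ be the 3-uniform hypergraph on $[n]$ whose edges are (any choice of) $e$ of the triangles of $G$; call this subhypergraph's 2-section $\partial_2\cH$. Then $\partial_2\cH \of G$ as graphs on the same vertex set (every edge of $\partial_2\cH$ is an edge of some triangle of $G$, hence an edge of $G$), so $\cI(G) \of \cI(\partial_2\cH)$ and therefore $i_2(\cH) = i(\partial_2\cH) \ge i(G)$. Since $\cH \in \cH_3(n,e)$, this shows every value $i(G)$ appearing on the right is dominated by some value $i_2(\cH)$ on the left, giving ``$\ge$'' (LHS $\ge$ RHS) and completing the proof.

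The only mildly delicate point — and the step I would be most careful about — is the bookkeeping around $k_3(G) \ge e$ versus $k_3(G) = e$: the right-hand maximum is over graphs with \emph{at least} $e$ triangles, so in the first direction I must note that a hypergraph with exactly $e$ edges produces a graph with possibly \emph{more} than $e$ triangles, which is still allowed; and in the second direction, from a graph with $\ge e$ triangles I select exactly $e$ of them to be hyperedges, discarding the rest, and the discarded triangles only help (removing hyperedges can only enlarge $\cI_2$, and removing edges from $\partial_2\cH$ likewise, but here I am comparing to all of $G$, so the clean statement $\partial_2\cH \of G$ suffices). Everything else is a routine unwinding of definitions, so I would keep the write-up short and emphasize the two containments $\cI_2(\cH) = \cI(\partial_2\cH)$ and $\partial_2\cH \of G$.
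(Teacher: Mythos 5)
Your proof is correct and follows essentially the same two-direction argument as the paper: map $\cH\mapsto\partial_2\cH$ for one inequality, and select $e$ triangles of $G$ as hyperedges for the other, using $\partial_2\cH\of G$ to conclude $i(\partial_2\cH)\ge i(G)$. The extra remark that distinct $3$-element hyperedges give distinct triangles (so $k_3(\partial_2\cH)\ge e$) is a correct and worthwhile clarification of a point the paper leaves implicit.
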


\begin{proof}
	To prove that the  left hand side is at most the right we just take $\cH$ to attain the maximum on the left and let $G=\partial_2\cH$. We have $k_3(G) \ge e(\cH)=e$ and $i(G) = i_2(\cH)$. In the other direction, take a graph $G$ maximizing the right hand side. Let $K_3(G)$ be the $3$-uniform hypergraph on $[n]$ whose edges are the vertex sets of triangles in $G$. By hypothesis $e(K_3(G)) \ge e$, so we can take $\cH$ to be an arbitrary spanning sub-hypergraph of $K_3(G)$ having exactly $e$ edges. We  get
	\[
		i_2(\cH) = i(\partial_2 \cH) \ge i(G), 
	\]
	since $\partial_2\cH$ is a spanning subgraph of $G$. 
\end{proof}

Phrased in this way some of the difficulties of the problem are laid bare. To find the $2$-independent sets of $\cH$ of size $t$ we need to first take the lower shadow of $\cH$ to find $G$, and then take the upper shadow of $E(G)$ on level $t$; the $2$-independent sets are those not in this upper shadow. The twin demands on $\cH$ of having not too large a lower shadow $G$, which in turn has not too large an upper shadow $\partial^t G$, are in conflict. For $\cH$ to have small lower shadow, it should look as much like a colex initial segment as possible. For $G$ to have small upper shadow it should look as much like the lex graph as possible. 

We state here our main theorem, using some undefined terms that will be clarified later and giving less detail than we do in later sections. 

\begin{mainthm}
    With a finite number of persistent exceptions (that appear for all values of $n$), and a finite number of transient exceptions (that only appear for $n\le 31$) the maximum number of independent sets in a graph $G$, subject to having at least $m$ triangles, is achieved either by the lex graph with the fewest edges subject to having at least $m$ triangles, or the \emph{lexish} graph with the fewest edges subject to having at least $m$ triangles.
    
    Equivalently, and subject to the same exceptions, the maximum number of $2$-independent sets in a $3$-uniform hypergraph with $e$ edges on $n$ vertices is achieved either by the $(2,3,1)$-lex hypergraph or the $(2,3,1)$-\emph{lexish} hypergraph having $e$ edges. \qedhere
\end{mainthm}

We have chosen in this paper to present the hypergraph as our fundamental object for the purposes of proving the main theorem. Later we will meet the \emph{downset} associated with a shifted hypergraph $\cH$. This is (essentially) the edge set of $G=\partial_2 \cH$. 

%\todo[inline]{Remind me why we say essentially here? - LK; because we don't show the adjacencies to $0$; they can be inferred: $0$ is joined to every vertex of positive degree.}

We introduce $\pi$-lex uniform hypergraphs (for any permutation $\pi$) in Section \ref{sec:pilex}.
In Section \ref{sec:results} we state our main theorem more explicitly  (Theorem \ref{thm:2indLARGE}).

We begin the proof of Theorem \ref{thm:2indLARGE} in Section \ref{sec:shifted} by providing background on shifted hypergraphs and proving that an $r$-graph attaining the maximum number of $s$-independent sets can be found among the shifted hypergraphs.  In Section \ref{sec:counting2indsets} we introduce a way to draw a shifted 3-graph as a ``nice" subset of a 3-dimensional cube and discuss a way to count the number of $2$-independent sets lost when an edge is added to a shifted 3-graph.  Using this we restate the problem yet again, in language useful for our proof. In Sections \ref{sec:moves} and \ref{sec:narrow} we introduce a set of local moves that do not decrease the number of 2-independent sets.  In Sections \ref{sec:Extensions} and \ref{sec:UBonN} we use these lemmas to determine which cases are left to prove by computation.  Finally, we prove Theorem \ref{thm:2indLARGE} in Section \ref{sec:proof}. 

% subsection our_problem (end)

\subsection{Conventions} % (fold)
\label{sub:conventions}
	We describe here some conventions that apply throughout our paper.
	\begin{itemize} 
		\item  It will be convenient for us to use a slightly non-standard ground set for our hypergraphs: we let $[n]=\set{0,1,\dots,n-1}$, and we will consider all our hypergraphs to have vertex set $[n]$ for some $n$. 
		\item We will often need to describe finite sets of integers by listing their elements. Whenever we do so we do so in increasing order. Thus when we write $A = \set{a_1, a_2, \dots, a_{k}}$ we will always assume that $a_1<a_2<\dots<a_k$.
	\end{itemize}
% subsection conventions (end)
\section{Orderings on $k$-sets and $\pi$-lex Graphs}
\label{sec:pilex}
 In order to state our  results we need to describe a number of orderings on $r$-sets of integers and some associated $r$-graphs. These graphs are an extension of the idea of lex and colex graphs to $r$-graphs for $r>2$.   Recall that the \emph{lex order}, $<_L$, on finite subsets of $\N$ is defined by $A<_L B$ if  $\min(A\Delta B)\in A$. The \emph{colex order}, $<_C$, is defined by $A<_C B$ if $\max(A\Delta B) \in B$.
We create the \emph{lex $r$-graph}, $\cL_r(n,e)$, is the $r$-graph with vertex set $[n]$ and edge set the initial segment in the lex order on $\binom{[n]}{r}$ of length $e$.  Similarly, the \emph{colex $r$-graph}, $\cC_r(n,e)$, is the $r$-graph with vertex set $[n]$ and edge set the initial segment in the colex order on $\binom{[n]}{r}$ of length $e$.

\begin{ex}
The first few edges in the lex ordering on $\binom{[n]}{2}$ are
$$\set{0,1},\set{0,2},\dots,\set{0,n-1},\set{1,2},\set{1,3},\dots,\set{1,n-1},\set{2,3},\dots$$
and the first few edges in the colex ordering on are
$$\set{0,1},\set{0,2},\set{1,2},\set{0,3},\set{1,3},\set{2,3},\set{0,4},\set{1,4},\dots$$
\end{ex}

Note that initial segments of colex do not depend on the size of the ground set, unlike those of the lex ordering.  Sets that are early  in the lex ordering have small least elements, and sets that are early in the colex ordering  have small greatest elements.  This idea will help in understanding $\pi$-lex graphs.

In $r$-graphs for $r>2$ we can define other natural orders on $\binom{[n]}{r}$ leading to other $r$-graphs.  In fact, we can define $r!$ orderings.  While these orderings seem very natural we have not seen them introduced elsewhere.

\begin{defi}\label{defi:pilex}
Consider a permutation $\pi=(\pi_1,\dots,\pi_k)$ and let $A = \{a_1,a_2,\dots, a_k\}$ and $B = \{b_1,b_2,\dots, b_k\}$ be sets in $\binom{[n]}{k}$. We define the \emph{$\pi$-lex order} on $\binom{[n]}{k}$ by $A<_{\pi} B$ if for the least $i$ for which $a_{\pi_i} \neq b_{\pi_i}$ we have $a_{\pi_i} < b_{\pi_i}$.

Given a permutation $\pi$, define the \emph{$\pi$-lex $r$-graph with $n$ vertices and $e$ edges} to be the $r$-graph on vertex set $[n]$ with edge set forming an initial segment of the $\pi$-lex order on $\binom{[n]}{r}$ of length $e$. 
\end{defi}

\begin{ex}
 The lex ordering on $\binom{[n]}{3}$ is $\pi$-lex for $\pi = (1,2,3)$ and the colex ordering on $\binom{[n]}{3}$ is $\pi$-lex for $\pi = (3,2,1)$.  
The $\pi$-lex ordering that will be particularly important to us is the $(2,3,1)$-lex ordering.
The first few sets in the $(2,3,1)$-lex ordering on $\binom{[n]}{3}$ are
\begin{align*}
\{0,1,2\}, \{0,1,3\},&\dots, \{0,1,n-1\},\{0,2,3\},\{1,2,3\},\{0,2,4\}, \{1,2,4\},\dots\\
&\{0,2,n-1\},\{1,2,n-1\},\{0,3,4\},\{1,3,4\}, \{2,3,4\}, \{0,3,5\},\\
& \{1,3,5\}, \{2,3,5\},\dots, \{0,3,n-1\},\{1,3,n-1\},\{2,3,n-1\},\\
&\{0,4,5\},\{1,4,5\},\{2,4,5\},\{3,4,5\},\dots, \{0,4,n-1\},\\
&\{1,4,n-1\},\{2,4,n-1\},\{3,4,n-1\},\dots
\end{align*}

Notice that sets that are small in the $(2,3,1)$-lex ordering have their second greatest element being small.
\end{ex}

There is a natural partial ordering on $\binom{[n]}{k}$ that will also be relevant, that we call the \emph{compression} ordering. Given $A=\set{a_1, a_2, \dots, a_{k}}$ and $B=\set{b_1, b_2, \dots, b_{k}}$ we let $A \comple B$ if $a_i \le b_i$ for all $i$. Equivalently, $A\comple B$ if and only if for all $x\in \R$ we have $\abs{A\cap (-\infty,x]} \ge \abs{B\cap (-\infty,x]}$. The following simple lemma will be useful later.

\begin{lem}\label{lem:union}
	If $A_1, B_1$ are  $s$-sets with $A_1\comple B_1$, $A_2,B_2$ are  $(r-s)$-sets with $A_2\comple B_2$ and $A_1\cap A_2 = B_1\cap B_2 = \emptyset$ then 
	$A\comple B$ where $A=A_1\cup A_2$ and $B= B_1\cup B_2$.
\end{lem}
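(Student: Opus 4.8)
The cleanest route is via the characterization of $\comple$ in terms of counting functions, which the excerpt has just recorded: $X\comple Y$ for $k$-sets $X,Y$ if and only if $\abs{X\cap(-\infty,x]}\ge\abs{Y\cap(-\infty,x]}$ for every $x\in\R$. So I would first fix an arbitrary $x\in\R$ and simply add the two inequalities coming from the hypotheses, namely $\abs{A_1\cap(-\infty,x]}\ge\abs{B_1\cap(-\infty,x]}$ and $\abs{A_2\cap(-\infty,x]}\ge\abs{B_2\cap(-\infty,x]}$. Since $A_1$ and $A_2$ are disjoint, $\abs{A\cap(-\infty,x]}=\abs{A_1\cap(-\infty,x]}+\abs{A_2\cap(-\infty,x]}$, and likewise $\abs{B\cap(-\infty,x]}=\abs{B_1\cap(-\infty,x]}+\abs{B_2\cap(-\infty,x]}$ because $B_1\cap B_2=\emptyset$. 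Combining, $\abs{A\cap(-\infty,x]}\ge\abs{B\cap(-\infty,x]}$ for all $x$, which is exactly $A\comple B$. One should also note in passing that $A$ and $B$ are genuine $r$-sets: $\abs{A}=\abs{A_1}+\abs{A_2}=s+(r-s)=r$ by disjointness, and similarly for $B$, so the statement $A\comple B$ is meaningful.

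An alternative, more hands-on proof works directly with the elementwise definition $X\comple Y\iff x_i\le y_i$ for all $i$, but this is slightly fussier because the sorted enumeration of $A=A_1\cup A_2$ interleaves the enumerations of $A_1$ and $A_2$ in a way that depends on the actual values, so one ends up reproving the counting-function characterization anyway. For that reason I would stick with the counting-function argument; it is essentially a one-line computation once the disjointness hypotheses are invoked to make the counts additive.

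The main (and only) obstacle is purely bookkeeping: one must be careful that disjointness is used on \emph{both} sides — for $A$ one needs $A_1\cap A_2=\emptyset$ and for $B$ one needs $B_1\cap B_2=\emptyset$ — and that these are precisely the hypotheses given. There is no real analytic or combinatorial difficulty; the lemma is a formal consequence of the additivity of $\abs{\,\cdot\cap(-\infty,x]}$ over disjoint unions together with the order-preservation of addition of inequalities.
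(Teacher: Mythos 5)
Your proof is correct and is essentially identical to the paper's own: both fix $x\in\R$, use disjointness to write $\abs{A\cap(-\infty,x]}$ and $\abs{B\cap(-\infty,x]}$ as sums, and then add the two inequalities coming from $A_1\comple B_1$ and $A_2\comple B_2$.
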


\begin{proof}
	For all $x\in \R$ we have
	\begin{align*}
		\abs{A\cap (-\infty,x]} &= \abs{A_1\cap (-\infty,x]} + \abs{A_2\cap (-\infty,x]} \\
			&\ge \abs{B_1\cap (-\infty,x]}  + \abs{B_2\cap (-\infty,x]}  = \abs{B\cap (-\infty,x]}.
	\end{align*}
\end{proof}

\section{Shifted Hypergraphs}
\label{sec:shifted}
Since threshold graphs appear as an answer to many extremal questions in graphs, the concept of a ``threshold hypergraph" should be useful when answering similar questions in hypergraphs.  While there are many equivalent definitions of    threshold graphs (see \cite{OrangeBook}),  in \cite{ThresholdHypergraphs} Reiterman, R\"odl, \v{S}i\v{n}ajov\'{a}, and T\r{u}ma show that the extensions of three of the equivalent definitions of threshold graphs are not equivalent for $r$-graphs with $r>2$.  The version that will be useful to us is the notion of \emph{shifted hypergraphs}, introduced in \cite{Frankl}.  We will show that $s$-independent sets in $r$-graphs are maximized by shifted hypergraphs and use this fact restate the problem.

\begin{defi}
Given a set $A\subset [n]$ and $i,j\in [n]$ such that $A\cap \{i,j\} = \{i\}$ define $A_{i\to j} = (A\setminus\{i\})\cup\{j\}$.
\end{defi}

\begin{defi}
Consider a hypergraph $\cH$ with vertex set $[n]$ and edge set $\cE$. For $0\leq j <i \leq n-1$ define the \emph{$(i,j)$-shift} $S_{i\to j}$ as follows:
\begin{itemize}
\item for each $E\in \cE$, 
$$S_{i\to j}(E) = \begin{cases}
E_{i\to j} &\text{if $E\cap\{i,j\} = \{i\}$}\\
E &\text{otherwise}
\end{cases}.$$
\item let $S_{i\to j}(\cE) = \{S_{i\to j}(E): E\in \cE\}\cup\{E: E,S_{i\to j}(E) \in \cE\}$.
\end{itemize}
For a hypergraph $\cH$ on vertex set $[n]$, we will write $\cH_{i\to j}$ to mean the hypergraph on vertex set $[n]$ with edge set $S_{i\to j}(\cE(\cH))$.  
\end{defi}

Thus, $\cH_{i\to j}$ is a hypergraph with the same number of edges as $\cH$ with the same sizes, but where we have replaced $i$ with $j$ whenever possible.  

\begin{defi}
A hypergraph $\cH= ([n],\cE)$ is \emph{shifted} if and only if $\cH_{i\to j} = \cH$ for all $0\leq j<i\leq n-1$.
\end{defi}

We will extend the definition of $\cH_{i\to j}$ slightly and set $\cH_{i\to i} = \cH$ for all $i\in [n]$.  In the next definition we extend again to apply a number of shifts at once. 
\begin{defi}
	Given an $r$-graph $\cH$ and $k$-sets $A \comple B$ with $A = \set{a_1,a_2,\dots,a_k}$, $B=\set{b_1, b_2, \dots, b_{k}}$ we define
	\[
		\cH_{B\to A} = (\cdots((\cH_{b_1\to a_1})_{b_2\to a_2})\cdots)_{b_k\to a_k}.
	\]
\end{defi}

We will use this definition in Section \ref{sec:counting2indsets}.  In particular, we will use the fact that if we apply a shift from all the vertices in one edge to another $r$-set of vertices, $A$, then $A$ will be in the edge set of the shifted graph.  We prove this in the next lemma.

\begin{lem}\label{lem:vectorshift}
	If $\cH$ is an $r$-graph on $[n]$ and $A\comple B$ are $r$-sets with $B\in \cH$ then $A\in \cH_{B\to A}$.
\end{lem}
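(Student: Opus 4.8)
The plan is to induct on $k = |A| = |B|$, or equivalently to track how the single-vertex shifts $S_{b_i\to a_i}$ act on the edge that starts life as $B$. The subtlety is that the definition of $\cH_{B\to A}$ applies the shifts $S_{b_1\to a_1}, S_{b_2\to a_2}, \dots, S_{b_k\to a_k}$ in sequence, and after the first few shifts the relevant edge is no longer $B$ but some intermediate set. So I would set $B^{(0)} = B$ and, for $1 \le \ell \le k$, let $B^{(\ell)}$ be the image of $B^{(\ell-1)}$ under $S_{b_\ell \to a_\ell}$ inside the hypergraph $\cH^{(\ell-1)} = (\cdots(\cH_{b_1\to a_1})\cdots)_{b_{\ell-1}\to a_{\ell-1}}$. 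The goal is then to show, by induction on $\ell$, that $B^{(\ell)} \in \cH^{(\ell)}$ and that $B^{(\ell)}$ has the form $\set{a_1,\dots,a_\ell}\cup\set{b_{\ell+1},\dots,b_k}$ (so that after $\ell = k$ steps we have $B^{(k)} = A \in \cH^{(k)} = \cH_{B\to A}$, as desired).

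The key step is the inductive one. First I would record the basic fact about a single shift: if $E \in \cG$ for a hypergraph $\cG$, and $0 \le j < i \le n-1$, then $S_{i\to j}(E) \in \cG_{i\to j}$ — this is immediate from the definition, since either $E \cap \set{i,j} = \set{i\}$ and $S_{i\to j}(E) = E_{i\to j}$ is explicitly thrown into the new edge set, or $S_{i\to j}(E) = E$ is an edge still (an edge survives a shift unless it gets moved, and if it gets moved its image is included). Applying this with $\cG = \cH^{(\ell-1)}$, $E = B^{(\ell-1)}$, $i = b_\ell$, $j = a_\ell$ gives $B^{(\ell)} := S_{b_\ell\to a_\ell}(B^{(\ell-1)}) \in \cH^{(\ell)}$. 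It then remains to verify the shape of $B^{(\ell)}$: by the induction hypothesis $B^{(\ell-1)} = \set{a_1,\dots,a_{\ell-1}}\cup\set{b_\ell,\dots,b_k}$, and I need that $a_\ell \notin B^{(\ell-1)}$ and $b_\ell \in B^{(\ell-1)}$ so that the shift genuinely replaces $b_\ell$ by $a_\ell$, yielding $B^{(\ell)} = \set{a_1,\dots,a_\ell}\cup\set{b_{\ell+1},\dots,b_k}$. That $b_\ell \in B^{(\ell-1)}$ is clear. For $a_\ell \notin B^{(\ell-1)}$: since $A$ and $B$ are listed in increasing order and $A \comple B$, we have $a_\ell \le b_\ell < b_{\ell+1} < \dots < b_k$, so $a_\ell$ differs from each of $b_\ell,\dots,b_k$ unless $a_\ell = b_\ell$ (in which case the shift does nothing and $B^{(\ell)} = B^{(\ell-1)}$ still has the right form since $a_\ell = b_\ell$); and $a_\ell \notin \set{a_1,\dots,a_{\ell-1}}$ because the $a_i$ are strictly increasing. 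Also $0 \le a_\ell \le b_\ell \le n-1$ and $a_\ell < b_\ell$ (or $a_\ell = b_\ell$, handled above), so the shift $S_{b_\ell \to a_\ell}$ is legitimate.

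The main obstacle — really the only place care is needed — is bookkeeping: making sure that the intermediate edge $B^{(\ell-1)}$ is disjoint from earlier shift targets in the right way and that applying $S_{b_\ell\to a_\ell}$ doesn't inadvertently interact with the already-placed coordinates $a_1,\dots,a_{\ell-1}$. The hypothesis $A \comple B$ with both sets written in increasing order is exactly what rules this out, via the coordinatewise inequalities $a_i \le b_i$; one could also phrase the disjointness cleanly using Lemma~\ref{lem:union}. I would therefore state the induction hypothesis in the strong form "$B^{(\ell)} = \set{a_1,\dots,a_\ell}\cup\set{b_{\ell+1},\dots,b_k} \in \cH^{(\ell)}$" so that the shape is available to drive the next step, and then the conclusion $A \in \cH_{B\to A}$ is the case $\ell = k$.
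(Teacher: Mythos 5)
Your proposal is correct and takes essentially the same approach as the paper: both arguments track how the single-coordinate shifts $S_{b_\ell\to a_\ell}$, applied in sequence, carry the image of $B$ step by step toward $A$, using $A\comple B$ and the increasing-order conventions to ensure each shift genuinely replaces $b_\ell$ by $a_\ell$ without colliding with earlier or later coordinates. The only difference is organizational: you run a forward induction explicitly tracking the intermediate sets $B^{(\ell)}$, whereas the paper does a reverse induction on the length of the initial agreement prefix, peeling off the first disagreeing coordinate and recursing on the subproblem $(\cH_{b_{\ell+1}\to a_{\ell+1}})_{B'\to A}$; the two are essentially equivalent.
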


\begin{proof}
	We'll prove it by (reverse) induction on the parameter 
	\[
		\ell=\max\setof{j}{\text{$a_i=b_i$ for all $i\le j$}}. 
	\]
	If $\ell=r$ then $A=B$ and there is nothing to prove. If $\ell=r-1$ then $A = B_{b_r\to a_r}$ and $\cH_{B\to A} = \cH_{b_r\to a_r}$. It is clear from the definition of shifting that $A\in \cH_{B\to A}$. Suppose then that $\ell < r-1$. Note that $a_{\ell+1}\neq b_{\ell+1}$. Consider $B' = B \symd \set{a_{\ell+1},b_{\ell+1}} = B_{b_{\ell+1}\to a_{\ell+1}}$. We have $A\comple B'\comple B$. Since all earlier compressions have no effect we have $\cH_{B\to A} = (\cH_{b_{\ell+1}\to a_{\ell+1}})_{B'\to A}$. By the definition of shifting we know that $B' \in \cH_{b_{\ell+1}\to a_{\ell+1}}$ since $B\in\cH$. This implies by induction that $A\in (\cH_{b_{\ell+1}\to a_{\ell+1}})_{B'\to A}=\cH_{B\to A}$, as required.
\end{proof}

\subsection{Shifted Hypergraphs Maximize $s$-independent Sets}
In this section we will show that for any $r,s,n,$ and $e$ we can find a $r$-graph maximizing the number of $s$-independent sets in $\cH_r(n,e)$ among the shifted hypergraphs.  In the next proof we will construct an injection from the set of $s$-independent sets in some hypergraph $\cH$ to the set of $s$-independent sets in the shift $\cH_{i\to j}$.  Note that in the next lemma we need not assume that the hypergraph is uniform.

\begin{lem}
\label{lem:shift}
Let $\cH$ be a hypergraph with vertex set $[n]$ and let $0\leq j < i <n$.  Then for all $s$,
$$i_s(\cH_{i\to j}) \geq i_s (\cH).$$
\end{lem}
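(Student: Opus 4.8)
The plan is to build an explicit injection $\phi\colon \cI_s(\cH) \to \cI_s(\cH_{i\to j})$, handling on a case-by-case basis how a given $s$-independent set $I$ of $\cH$ interacts with the pair $\{i,j\}$. The natural first move is: if $I$ does not contain $i$, or if $I$ already contains both $i$ and $j$, then try $\phi(I) = I$ itself; the interesting case is $i \in I$, $j \notin I$, where the candidate is $\phi(I) = I_{i\to j}$. So the real work is to check that these candidates are genuinely $s$-independent in $\cH_{i\to j}$, and that the map is injective.

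First I would set up notation for the edge set of $\cH_{i\to j}$: an edge of $\cH_{i\to j}$ is either $E$ with $E \cap \{i,j\} \ne \{i\}$ that was already an edge, or $E_{i\to j}$ for some edge $E$ with $E \cap \{i,j\} = \{i\}$, together with the ``doubly covered'' edges $E$ for which both $E$ and $E_{i\to j}$ lay in $\cH$. Then I would verify each case. For $I$ with $i\notin I$: such an $I$ meets every edge $E$ of $\cH$ in fewer than $s$ vertices; I need to see it meets every edge of $\cH_{i\to j}$ similarly. An edge of the form $E_{i\to j}$ can only gain a vertex relative to $E$ at $j$, but $j$ may or may not already be in $I$ — here one uses that the preimage $E$ (with $i \in E$, $j \notin E$) has $|I \cap E| = |I \cap (E\setminus\{i\})|$, and $|I \cap E_{i\to j}| \le |I \cap (E\setminus\{i\})| + 1 \le |I\cap E| + 1$, which is not quite good enough by itself; the fix is the standard one — if $|I \cap E_{i\to j}| = |I \cap E| + 1$ then in fact $|I\cap E| \le s-2$ would be needed, so one instead argues via the doubly-covered edge or re-chooses $\phi$ on a small exceptional subfamily. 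This is exactly the delicate bookkeeping that makes shifting lemmas fiddly.

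For $I$ with $i\in I$, $j\notin I$ and $\phi(I) = I_{i\to j} =: I'$: here $I'$ contains $j$ but not $i$. For an edge $E$ of $\cH_{i\to j}$ with $i,j \notin E$ or $i,j \in E$, we have $|I' \cap E| = |I \cap E| < s$ directly. The only edges to worry about are those meeting $\{i,j\}$ in exactly one vertex. If $j \in E$, $i\notin E$ with $E \in \cH_{i\to j}$: either $E \in \cH$ already — and then $|I' \cap E| = |I\cap E| + [j\in E] - [i\in E]$... one must track this carefully — or $E = F_{i\to j}$ for some $F \in \cH$ with $i \in F$, $j\notin F$; then $I' \cap E$ corresponds under the bijection $i\leftrightarrow j$ to $I \cap F$, giving $|I'\cap E| = |I \cap F| < s$. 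The case $i\in E$, $j\notin E$, $E\in \cH_{i\to j}$ means $E$ is a doubly-covered edge with $E_{i\to j}\in\cH$ too, and one uses $s$-independence of $I$ on $E_{i\to j}$. Injectivity is then immediate since on each piece $\phi$ is either the identity or the involution $I\mapsto I_{i\to j}$, and the images (sets not containing $i$ vs. sets containing $j$ but not $i$ — wait, both land among sets with $i\notin I'$) — so I would instead note the images of the two nontrivial pieces are disjoint because $\phi(I)$ records whether the original $I$ contained $j$, recoverable as ``$I = \phi(I)$'' vs. ``$I = \phi(I)_{j\to i}$'', a distinction that survives because $\phi(I)_{j\to i} \notin \cI_s$ in the first case or can be told apart.

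The main obstacle, as flagged above, is the off-by-one issue: passing from $E$ to $E_{i\to j}$ can increase $|I\cap(\cdot)|$ by one for a set $I$ not containing $i$, which a priori could push $|I\cap E_{i\to j}|$ up to $s$. The resolution — and the crux of the whole lemma — is that this can only happen when $E$ and $E_{i\to j}$ are the same set... no: it happens precisely when $j\in I$, $i\notin I$, $j\notin E$, and then $E' := E_{i\to j}$ satisfies $j \in E'$; but we should have picked up that $E$ was already an edge whose shift is also forced in. I expect the clean way to close this is to observe that if $i\notin I$ and $j\in I$ then for the offending edge $E$ (with $i\in E$, $j\notin E$) the set $(E \setminus \{i\})$ already has $|I \cap (E\setminus\{i\})| \le s-1$, hence $|I\cap E_{i\to j}| \le s-1 \cdot$ — no, equality is possible — so one genuinely re-defines $\phi$ on such $I$ by $\phi(I) = I_{j\to i}$, and the heart of the argument is checking that this re-definition is consistent, covers everything exactly once, and lands in $\cI_s(\cH_{i\to j})$. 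I would organize the final write-up around a clean partition of $\cI_s(\cH)$ into four classes by the pattern of $I\cap\{i,j\}$, define $\phi$ piecewise, and verify injectivity and target-membership class by class, with the $j$-in-$I$-only and $i$-in-$I$-only classes being where all the real content sits.
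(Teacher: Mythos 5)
The plan has a genuine gap, and it is exactly where you flag uncertainty. Your map $\phi$ is defined piecewise by the pattern of $I\cap\{i,j\}$, and in particular on the class $i\in I$, $j\notin I$ you set $\phi(I)=I_{i\to j}$. That candidate does not in general land in $\cI_s(\cH_{i\to j})$. Concretely, take $n=4$, $s=2$, $j=0$, $i=1$, and $\cH$ with the single edge $\{0,2\}$. Then $\cH_{1\to 0}=\cH$ (the edge is fixed under the shift since it does not contain $1$). The set $I=\{1,2\}$ is $2$-independent in $\cH$, has $i\in I$, $j\notin I$, and $\phi(I)=I_{1\to 0}=\{0,2\}$, which meets the edge $\{0,2\}$ in two vertices and hence is not $2$-independent in $\cH_{1\to 0}$. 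This is precisely the subcase you labelled ``one must track this carefully'': an unshifted edge $E\in\cH$ with $j\in E$, $i\notin E$ picks up the new element $j$ of $I_{i\to j}$ and the count can jump to $s$. So the map is not well-defined into the codomain.

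Trying to repair the class piecewise does not rescue the scheme. On the $i\in I$, $j\notin I$ class the identity $\phi(I)=I$ actually does work (replacing an edge $F$ by $F_{i\to j}$ only removes $i\in I$ and adds $j\notin I$, so the intersection count cannot grow). But if you use the identity there as well as on the $j\in I$, $i\notin I$ class (or use $I\mapsto I_{j\to i}$ on the latter), the images of the two nontrivial classes collide and injectivity fails. The structural obstruction is that a correct injection cannot be determined by $I\cap\{i,j\}$ alone; whether $I$ survives the shift also depends on how $I$ meets edges away from $\{i,j\}$. The paper's proof sidesteps this by not attempting a global map on all of $\cI_s(\cH)$: it defines an injection only from $\cI_s(\cH)\setminus\cI_s(\cH_{i\to j})$ into $\cI_s(\cH_{i\to j})\setminus\cI_s(\cH)$. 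It first shows any $I$ that is lost must have $j\in I$ and $i\notin I$, then maps such $I$ to $I_{j\to i}$, and verifies both that $I_{j\to i}$ is $s$-independent in $\cH_{i\to j}$ and (crucially, for disjointness of images) that $I_{j\to i}$ was not $s$-independent in $\cH$. In short: the dichotomy has to be ``already independent after shifting'' versus ``not,'' rather than the four-way split by $I\cap\{i,j\}$, and the swap must go only one way, $j\to i$, applied only to the lost sets.
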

\begin{proof}
We will define an injection  from $\cI_s(\cH)\setminus \cI_s(\cH_{i\to j})$ to $\cI_s(\cH_{i\to j}) \setminus \cI_s(\cH).$
Let $I$ be an independent set in $\cI_s(\cH)\setminus\cI_s(\cH_{i\to j})$.  
If $j\notin I$ we have $|I \cap S_{i\to j}(E)|\leq |I\cap E|$ for all $E\in \cE$ and so $j\in I$. Similarly, $i\notin I$, because if $I$ is $s$-independent in $\cH$ and $i,j\in I$ then $I$ is $s$-independent in $\cH_{i\to j}$.  Define $f: \cI_s(\cH)\setminus \cI_s(\cH_{i\to j}) \to \cI_s(\cH_{i\to j}) \setminus \cI_s(\cH)$ by $f(I) = I_{j\to i}$.  
This is clearly an injection so we need only show that  $I_{j\to i} \in \cI_s(\cH_{i\to j}) \setminus \cI_s(\cH)$.  Let $F\in \cE(\cH_{i\to j})$ and consider $|I_{j\to i}\cap F|$.  

Recall $\cE(\cH_{i\to j}) = \{S_{i\to j}(E): E \in \cE(\cH)\} \cup \{E: E, S_{i\to j}(E) \in  \cE(\cH)\}$.  
Suppose $F\in \{S_{i\to j}(E): E \in \cE(\cH)\}$.  Then either
\begin{itemize}
\item $F = E$ for some $E\in \cE(\cH)$ because  $E\cap \{i,j\} \neq \{i\}$ and so $S_{i\to j}(E) = E$ or
\item $F = E_{i\to j}$ for some $E\in\cE(\cH)$
\end{itemize}

Suppose $F\in\{E: E,S_{i\to j}(E) \in \cE(\cH)\}$.  It's possible that $E$ and $S_{i\to j}(E)$ are in $\cE(\cH)$ for two reasons:
\begin{itemize}
\item  $S_{i\to j}(E) = E$ because $E\cap\{i,j\} \neq \{i\}$ (which is the same as the first case above) or
\item $S_{i\to j}(E) = E_{i\to j}$ but $E_{i\to j} \in \cE(\cH)$
\end{itemize}

So the proof will be in three cases.
\begin{enumerate}
\item Suppose that $F= E$ for some $E\in\cE(\cH)$ such that $E\cap \{i,j\} \neq \{i\}$.  If $E\cap\{i,j\} = \emptyset$ then
$$|I_{j\to i} \cap F| = |I_{j\to i} \cap E| = |I\cap E|<s.$$
If $E\cap \{i,j\} = \{j\}$ then
$$|I_{j\to i} \cap F| = |I_{j\to i} \cap E| < |I\cap E| <s.$$
If $E\cap \{i,j\} = \{i,j\}$ then 
$$|I_{j\to i} \cap F| = |I_{j\to i} \cap E| = |I\cap E| <s.$$

\item  Suppose that $F = E_{i\to j}$ for some $E\in\cE(\cH)$.  Then
$$|F\cap I_{j\to i}| = |E_{i\to j}\cap I_{j\to i}| =|E\cap I| <s.$$

\item  Suppose that $F = E$ for some $E\in \cE(\cH)$ such that $E\cap\{i,j\}=\{i\}$ and $E_{i\to j}\in\cE(\cH)$.  Then
$$|F\cap I_{j\to i}| = |E\cap I_{j\to i}| = |E_{i\to j} \cap I| <s.$$
\end{enumerate}

Therefore $I_{j\to i}\in \cI_s(\cH_{i\to j})$.  It remains to show that $I_{j\to i}\notin \cI_s(\cH)$.  Since $I\notin \cI_s(\cH_{i\to j})$ there exists $E\in \cE(\cH_{i\to j})$ such that $|I\cap E|\geq s$.  It must be the case that $E = F_{i\to j}$ for some $F\in \cH$ and $E\neq F$.  Then
$$s\leq |I\cap E| = |I_{j\to i} \cap E_{j\to i}| = |I_{j\to i} \cap F|.$$
Thus, $I_{j\to i} \notin \cI_s(\cH)$.  So, $|\cI_s(\cH)\setminus \cI_s(\cH_{i\to j})|\leq |\cI_s(\cH_{i\to j}) \setminus \cI_s(\cH)|.$  Therefore, 
$$|\cI_s(\cH)|\leq |\cI_s(\cH_{i\to j})|.$$
\end{proof}

\begin{cor}
\label{cor: shifted}
A hypergraph maximizing the number of $s$-independent sets among all hypergraphs with $n$ vertices and $e$ edges can be found among the shifted hypergraphs. 
\end{cor}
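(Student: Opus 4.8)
The plan is a routine compression argument. By Lemma~\ref{lem:shift} a single shift never decreases the number of $s$-independent sets, so it suffices to show that, starting from any extremal hypergraph, finitely many shifts produce a shifted one.

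First I would fix a hypergraph $\cH^{*}$ on $[n]$ with $e$ edges attaining $\max\setof{i_s(\cH)}{\text{$\cH$ on $[n]$ with $e$ edges}}$; this maximum exists since it ranges over a finite set. I would then record two elementary facts about a shift $S_{i\to j}$ (for $0\le j<i\le n-1$): it keeps the ground set inside $[n]$ and it preserves the number of edges. The second point is the only mildly delicate one: the map $E\mapsto S_{i\to j}(E)$ collapses exactly the pairs $E,\,E_{i\to j}$ that both already belong to $\cE(\cH)$, and the clause $\set{E:E,S_{i\to j}(E)\in\cE}$ in the definition of $S_{i\to j}(\cE)$ re-adds precisely those edges, so $\abs{\cE(\cH_{i\to j})}=\abs{\cE(\cH)}$. (Since $S_{i\to j}$ also preserves every edge's size, the whole argument stays inside $\cH_r(n,e)$ if one wishes.)

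Next I would introduce the monovariant $\Phi(\cH)=\sum_{E\in\cE(\cH)}\sum_{v\in E}v\in\N$. Every edge of $\cH_{i\to j}$ is either an edge of $\cH$ or is $E_{i\to j}$ for some $E\in\cE(\cH)$, and $E_{i\to j}$ has label-sum smaller by $i-j>0$; moreover one checks directly from the definition that $\cH_{i\to j}=\cH$ if and only if every edge $E$ with $E\cap\set{i,j}=\set{i}$ already satisfies $E_{i\to j}\in\cE(\cH)$, which is exactly the condition that $\Phi$ is unchanged. Hence any shift with $\cH_{i\to j}\neq\cH$ strictly decreases $\Phi$. The argument then concludes: while the current (extremal) hypergraph is not shifted, pick $i>j$ with $\cH_{i\to j}\neq\cH$ and apply the shift; by Lemma~\ref{lem:shift} extremality is preserved, and by the above $\Phi$ strictly drops. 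As $\Phi$ is a nonnegative integer this can happen only finitely often, so the process terminates at an extremal hypergraph on which no shift has any effect, which is precisely a shifted hypergraph.

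There is no genuine obstacle here beyond Lemma~\ref{lem:shift}, which is already in hand; the only things to get right are the bookkeeping that shifting preserves $\abs{\cE}$ and that the monovariant genuinely decreases on every effective shift, both of which follow immediately from the definition of $S_{i\to j}(\cE)$.
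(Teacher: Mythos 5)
Your argument is correct and is essentially the paper's own proof: the monovariant $\Phi(\cH)=\sum_{E\in\cE(\cH)}\sum_{v\in E}v$ is exactly the quantity $t(\cH)$ used in the paper, which simply picks an extremal hypergraph minimizing $t$ and observes that any nontrivial shift would decrease it, whereas you phrase the same fact as a termination argument for iterated shifting. The extra bookkeeping you include (that shifts preserve $\abs{\cE}$ and strictly decrease $\Phi$ when nontrivial) is correct and matches what the paper asserts without elaboration.
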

\begin{proof}
Let $\displaystyle{t(\cH) = \sum_{E\in\cE(\cH)} \sum_{i\in E} i}$.  Pick $\cH$ with the maximal number of $s$-independent sets and $t(\cH)$ minimal.  Let $0\leq j <i\leq n$.  Note $\cH_{i\to j}$ has the same number of vertices and edges as $\cH$ and $i_s(\cH_{i\to j}) \geq i_s(\cH)$ by Lemma \ref{lem:shift}.  Thus, we must have $\cH_{i\to j} = \cH$, else $t(\cH_{i\to j})<t(\cH)$ contradicting the definition of $\cH$.  So $\cH$ is a shifted hypergraph maximizing the number of $s$-independent sets.
\end{proof}

For the remainder of the paper we will focus on shifted hypergraphs.\section{Formal Statement of Main Result}
\label{sec:results}

Theorems \ref{thm:Sind} and \ref{thm:1ind} answer the question of which $3$-graphs have the most $3$-independent sets and $1$-independent sets, respectively. Our main result answers the question of which $3$-graphs have the most $2$-independent sets. We need some preliminary definitions before we state the theorem. 

 As shown in Section \ref{sec:shifted}, we need only consider shifted hypergraphs. It will turn out that the feature of a shifted $3$-graph $\cH$ that determines $i_2(\cH)$ is the collection of its edges that contain $0$. We make the following definition so that we can state our main result, but we discuss the topic more extensively in Section \ref{sec:counting2indsets}.

\begin{defi}
	Given a shifted $3$-graph $\cH$ the \emph{downset} of $\cH$ is the set 
	\[
		D(\cH) = \setof{(i,j)}{\set{0,i,j}\in \cH}.
	\]
	This is indeed a downset in the poset 
	\[
		B_n = \setof{(i,j)}{1\le i<j \le n-1} \of \set{1,2,\dots,n-1}^2
	\]
	with the product order.
\end{defi}

Associating hypergraphs to downsets is a many to one relationship.  A hypergraph $\cH$ has exactly one downset, but given a downset $D$, there are often many (shifted) hypergraphs that have downset $D$. An example of how we visualize the downset is shown in Figure \ref{fig:downsetvis}. A cell $(i,j)$ is shaded provided that $\{0,i,j\}\in \cH$. The downset of a hypergraph differs from the lower shadow $\partial_2(\mathcal{H})$ introduced in Section \ref{sub:our_problem} in that the edges in $\partial_2(\mathcal{H})$ that contain $0$ are not shown in the downset---they are implied.

\begin{figure}
\begin{center}
\begin{tikzpicture}[scale=.9]
\draw[step=1cm,black,very thin] (0,0) grid (1,5);
\draw[step=1cm,black,very thin] (1,1) grid (2,5);
\draw[step=1cm,black,very thin] (2,2) grid (3,5);
\draw[step=1cm,black,very thin] (3,3) grid (4,5);
\draw[step=1cm,black,very thin] (4,4) grid (5,5);
\draw[fill=lightgray] (0,0)--(0,1)--(1,1) -- (1,0) -- (0,0);
\draw[fill=lightgray] (0,1)--(0,2)--(1,2) -- (1,1) -- (0,1);
\draw[fill=lightgray] (0,2)--(0,3)--(1,3) -- (1,2) -- (0,2);
\draw[fill=lightgray] (0,2)--(0,3)--(1,3) -- (1,2) -- (0,2);
\draw[fill = lightgray] (1,1) -- (1,2) -- (2,2) -- (2,1) -- (1,1);
\draw (.5,.5) node {\tiny{$(1,2)$}};
\draw (.5,1.5) node {\tiny{$(1,3)$}};
\draw (.5,2.5) node {\tiny{$(1,4)$}};
\draw (.5,3.5) node {\tiny{$(1,5)$}};
\draw (.5,4.5) node {\tiny{$(1,6)$}};
\draw (1.5,1.5) node {\tiny{$(2,3)$}};
\draw (1.5,2.5) node {\tiny{$(2,4)$}};
\draw (1.5,3.5) node {\tiny{$(2,5)$}};
\draw (1.5,4.5) node {\tiny{$(2,6)$}};
\draw (2.5,2.5) node {\tiny{$(3,4)$}};
\draw (2.5,3.5) node {\tiny{$(3,5)$}};
\draw (2.5,4.5) node {\tiny{$(3,6)$}};
\draw (3.5,3.5) node {\tiny{$(4,5)$}};
\draw (3.5,4.5) node {\tiny{$(4,6)$}};
\draw (4.5,4.5) node {\tiny{$(5,6)$}};
\end{tikzpicture}
\caption{A visualization for a downset for a hypergraph with $7$ vertices. The hypergraph could have edge set $\left\{\{0,1,2\}, \{0,1,3\}, \{0,1,4\}, \{0,2,3\}\right\}$ or  $\left\{\{0,1,2\}, \{0,1,3\}, \{0,1,4\}, \{0,2,3\}, \{1,2,3\}\right\}$.
}
\label{fig:downsetvis}
\end{center}
\end{figure}

In Section \ref{sec:pilex} we introduced $(2,3,1)$-lex $3$-graphs. The maximizers of $2$-independent sets in $\mathcal{H}_3(n,e)$ are generally $(2,3,1)$-lex graphs. We describe the $3$-graphs that are maximizers by their downsets in the following definition.

\begin{defi}
We say that a shifted 3-graph $\cH$ is \emph{$(2,3,1)$-lex style} if its downset $D = D(\cH)$ satisfies
\begin{itemize}
\item $D$ is an initial segment in lex order, or
\item $D$ is a downset in $B_n$ that is an initial segment in lex order missing one edge.
\end{itemize}
\end{defi}

The possible downsets of $(2,3,1)$-lex style $3$-graphs are shown in Figure \ref{fig:231lex}.

\begin{rmk}
	All $(2,3,1)$-lex graphs are $(2,3,1)$-lex style as a consequence of having a downset that is an initial segment in lex order have the property that we can arrange the edges not in the base layer so that they form an initial segment in $(2,3,1)$-lex order.   Notice that if $D$ is a downset in $B_n$ that is an initial segment in lex order missing one edge then that edge must correspond to the top cell in the second to last column.  This is shown in the right downset in Figure \ref{fig:231lex}.  

\end{rmk}

 Theorem \ref{thm:2indLARGE} says, roughly, that hypergraphs that have downsets that are $(2,3,1)$-lex style maximize $2$-independent sets.  In the following theorem we describe the non-$(2,3,1)$-lex style hypergraphs that maximize $2$-independent sets by their lower shadow graph.

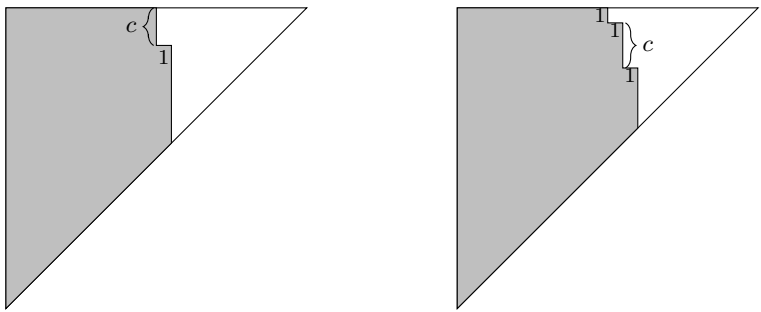
\begin{figure}
\begin{center}
\begin{tikzpicture}[scale=1]
\draw (0,0) -- (0,4) -- (4,4) -- cycle;
\draw[fill=lightgray] (0,0) -- (0,4) -- (2,4) -- (2,3.5) -- (2.2,3.5) -- (2.2,2.2) -- cycle;
\draw (2.1,3.35) node {\tiny{$1$}};
\draw [decorate,decoration={brace,amplitude=4pt},xshift=-1pt,yshift=0pt] (2,3.5) -- (2,4) node [black,midway,xshift=-0.3cm]{\scriptsize{$c$}}; 
\draw (6,0) -- (6,4) -- (10,4) -- cycle;
\draw[fill=lightgray] (6,0) -- (6,4) -- (8,4) -- (8,3.8) -- (8.2,3.8) -- (8.2,3.2) -- (8.4,3.2)  -- (8.4,2.4)-- cycle;
\draw (7.9,3.9) node {\tiny{$1$}};
\draw (8.1,3.7) node {\tiny{$1$}};
\draw (8.3, 3.1) node {\tiny{$1$}};
\draw [decorate,decoration={brace,amplitude=4pt,mirror},xshift=1pt,yshift=0pt] (8.2,3.2) -- (8.2,3.8) node [black,midway,xshift=0.3cm]{\scriptsize{$c$}}; 
\end{tikzpicture}

\caption{The (zoomed out) downset of a $(2,3,1)$-lex 3-graph at left and a $(2,3,1)$-lex style $3$-graph that is not $(2,3,1)$-lex at right.  The vertical drops are length $c$ for some $c\geq 0$.}
\label{fig:231lex}
\end{center}
\end{figure}

\begin{thm}
\label{thm:2indLARGE}
Let $\cH$ be a $3$-graph on $n$ vertices with $e$ edges where $n\geq 32$. Then there exists a $3$-graph $\cG$ with $n$ vertices and $e$ edges such that
$$i_2(\cH) \leq i_2(\mathcal{G}),$$
where  $\cG$ is either $(2,3,1)$-lex style or $\cG$ has $\partial_2(\cG)$ coming from one of the following set of $5$ persistent exceptions: 
\[\mathcal{P}_n = \{(K_3\vee E_1)\cup E_{n-5}, (K_2\vee E_{n-5})\cup E_2, (K_2\vee E_{n-4})\cup E_1, K_3\vee E_{n-4} ,K_4\vee E_{n-5}\}.\]
 When $n<32$ there are $16$ possible downsets of hypergraphs that maximize $2$-independent sets that are not $(2,3,1)$-lex style or in $\mathcal{P}_n$. These downsets are shown in Table \ref{table:exceptions}.

\end{thm}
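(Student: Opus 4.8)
The plan is to reduce the problem, via the reductions already set up in the paper, to a finite combinatorial optimization over downsets, and then to run a case analysis that is structured by the position of the ``staircase'' of the downset. First I would invoke Corollary \ref{cor: shifted} to restrict attention to shifted $3$-graphs, so that $i_2(\cH)$ is a function of the downset $D=D(\cH)$ together with the number of edges off the base layer; the content of Section \ref{sec:counting2indsets} (referenced but not yet given in the excerpt) presumably converts $i_2(\cH)$ into a clean expression counting the $2$-independent sets lost when each edge is added, so I would restate the optimization purely in terms of $D$ and this ``loss'' function. The key structural point is the tension already flagged after the Main Theorem: to keep the lower shadow $G=\partial_2\cH$ small the downset wants to be a colex-like initial segment, while to keep $\partial^t G$ small it wants to look lex-like; the $(2,3,1)$-lex order is precisely the compromise, so the heart of the proof is showing any other configuration can be improved.

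The main engine is the set of local ``moves'' developed in Sections \ref{sec:moves} and \ref{sec:narrow}, each of which is a modification of the downset (or of the edges off the base layer) that preserves $n$ and $e$ and does not decrease $i_2$. I would organize the argument as: (1) use the moves to show the downset can be taken to be ``narrow'' — concentrated in a bounded number of columns, with a controlled staircase shape; (2) use the extension lemmas of Section \ref{sec:Extensions} to show that, once the downset is narrow, either $\cH$ is already $(2,3,1)$-lex style, or it falls into one of a bounded family of near-extremal shapes; (3) use Section \ref{sec:UBonN} to prove that for $n$ large (here $n\ge 32$) all but finitely many of those near-extremal shapes are strictly beaten by a $(2,3,1)$-lex style graph, leaving exactly the persistent family $\mathcal P_n$ and the $(2,3,1)$-lex style graphs as candidates; and (4) for $n<32$, the finitely many remaining downsets are checked by direct computation, producing Table \ref{table:exceptions}. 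Steps (3) and (4) are where the two thresholds — the persistent exceptions and the cutoff $n=31$ — are pinned down; I would compare $i_2$ of each exceptional shadow graph ($K_3\vee E_1)\cup E_{n-5}$, $(K_2\vee E_{n-5})\cup E_2$, etc.\ against the relevant $(2,3,1)$-lex style competitor as an explicit function of $n$, and locate the crossover.

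The hard part will be step (1)–(2): proving that the local moves suffice to drive an arbitrary shifted downset into the narrow regime, i.e.\ that there are no ``wide'' local optima other than the listed exceptions. The difficulty is that the loss function is not monotone in any obvious single parameter — moving an edge to a lex-earlier position can simultaneously shrink the shadow on one level and enlarge it on another — so each move has to be justified by a somewhat delicate inequality between binomial coefficients, and one must be careful that the moves are compatible (applying one does not undo the hypotheses needed for the next). I expect the bookkeeping for which moves apply in which column-configuration, and the verification that the only obstructions are the five persistent shapes plus the sub-$32$ transient ones, to be the most intricate and error-prone portion; everything downstream of a clean ``narrowing'' lemma should be a finite, if tedious, check.
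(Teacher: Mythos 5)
Your outline tracks the paper's own strategy step for step---shift, pass to the downset/cost/space reformulation, apply local moves, use the extension lemmas to bound $n$, finish by computer search---so there is no divergence of approach to discuss. The problem is that what you have written is the roadmap, not the proof. The entire content of the paper's argument for Theorem~\ref{thm:2indLARGE} is the assembly of the lemmas into an \emph{exhaustive} case analysis (Propositions~\ref{prop:stairs} and~\ref{prop:nostairs}): one first splits on whether the downset ends in stairs; Lemmas~\ref{lem:stairs} and~\ref{lem:EndsWithStairs} reduce the stairs case to a single short stair; one then splits on whether the relevant column is tall or short relative to $\lfloor \log_2 i\rfloor$ (Corollaries~\ref{cor:Tall} and~\ref{cor:TallStairs} versus the trapezoid moves), on whether the last corner has $i$ small (Lemmas~\ref{lem:iSmall} and~\ref{lem:OneShortiSmall}, which is where $\mathcal P_n$ actually emerges), and on whether $j=n-1$, $j=n-2$, or $j\le n-3$ (Lemmas~\ref{lem:FitsInside1}, \ref{lem:FitsInside3}, \ref{lem:StairsN-2}, \ref{lem:StairsN-3}). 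Your proposal never specifies this tree, never checks that the hypotheses of the lemmas cover every non-$(2,3,1)$-lex-style downset, and explicitly defers the verification that the moves compose without undoing each other's hypotheses. That verification \emph{is} the proof; without it you have only restated the theorem's plausibility.

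Two smaller points. First, your step (1) claims the moves drive an optimal downset into a ``narrow'' regime of boundedly many columns; that is not what happens. Wide downsets are not eliminated outright---rather, a wide optimal downset with a short last column is forced to sit inside $B_m$ for a bounded $m$, and then Lemma~\ref{lem:extension} together with Corollary~\ref{cor:ExtensionOfOptimals} shows it cannot remain optimal once $n$ exceeds $m+3$. The bound $n\ge 32$ comes from chasing the constants in exactly these ``fits inside $B_{28}$'' arguments, not from a narrowing of the column count. Second, your plan to locate a ``crossover'' in $n$ for the five persistent exceptions misreads their role: they have no crossover (they are persistent precisely because they survive every local move for every $n$), and they are identified structurally in Lemma~\ref{lem:iSmall} as the narrow downsets with $i<5$ that the moves cannot improve, not by comparing $i_2$ against a lex competitor as a function of $n$.
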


To complete the picture, we state the equivalent theorem for the graph problem. We need a definition first.
\begin{defi}
    A graph with $n$ vertices and $e$ edges is \emph{lexish} if it is either the lex graph $\cL(n,e)$ or else $\cL(n,e)-f$ where $f$ is the edge $(i-1) n$ where $i$ is such that $\set{1,2,\dots,i+1}$ is the unique largest clique in $\cL(n,e)$. 
\end{defi}

\begin{thm}
    Let $H$ be a graph on $n$ vertices with $t$ triangles where $n\ge 32$. Then there exists a graph $G$ on $n$ vertices such that $k_3(G)\ge t$ and $i(G)\ge i(H)$ and moreover $G$ is either a lex graph, a lexish graph, or $(K_2 \vee E_t) \cup E_{n-t-2}$.
\end{thm}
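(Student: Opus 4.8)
The plan is to deduce this theorem from Theorem~\ref{thm:2indLARGE} via the equivalence between the graph and hypergraph versions of our problem, so that all the genuine work lies in translating the structural conclusion about $3$-graphs into one about graphs through the lower shadow $\partial_2$. Concretely: suppose $H$ is a graph on $[n]$ with $k_3(H)\ge t$. Let $K_3(H)$ be the $3$-graph whose edges are the vertex sets of triangles of $H$; it has at least $t$ edges, so fix a spanning sub-$3$-graph $\cH\of K_3(H)$ with exactly $t$ edges. Every pair of vertices contained in an edge of $\cH$ lies in a triangle of $H$, so $\partial_2\cH$ is a spanning subgraph of $H$; hence $i(\partial_2\cH)\ge i(H)$, and since $i_2(\cH)=i(\partial_2\cH)$ we get $i_2(\cH)\ge i(H)$. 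Now $\cH\in\cH_3(n,t)$ and $n\ge 32$, so Theorem~\ref{thm:2indLARGE} produces a $3$-graph $\cG\in\cH_3(n,t)$ with $i_2(\cG)\ge i_2(\cH)$ that is either $(2,3,1)$-lex style or has $\partial_2\cG\in\cP_n$. Setting $G=\partial_2\cG$ we get $i(G)=i_2(\cG)\ge i(H)$ and $k_3(G)\ge e(\cG)=t$, since every edge of $\cG$ spans a triangle of $G$. It remains to identify $G$ in the two cases.

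In the main case $\cG$ is $(2,3,1)$-lex style. The starting point is that for a shifted $3$-graph the lower shadow is completely determined by the downset: since every pair lying inside a non-base edge of a shifted $3$-graph is already recorded as a cell of the downset, $\partial_2\cG$ is exactly the set of edges $\{i,j\}$ with $(i,j)\in D(\cG)$ together with the edges $\{0,i\}$ for each $i$ occurring in some cell of $D(\cG)$. I would then split on how far the downset reaches. If $D(\cG)=\{(1,2),(1,3),\dots,(1,t+1)\}$ lies entirely in the first row of $B_n$ (the cells $(1,j)$), then its support is $\{1,\dots,t+1\}$ and $\partial_2\cG=(K_2\vee E_t)\cup E_{n-t-2}$. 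If $D(\cG)$ is an initial segment of lex order on $B_n$ containing the whole first row, its support is $\{1,\dots,n-1\}$; since the lex order on $\binom{[n]}{2}$ first lists all pairs $\{0,j\}$ and then runs through the lex order on pairs from $\{1,\dots,n-1\}$ (which is precisely the lex order on $B_n$), it follows that $\partial_2\cG$ is an initial segment of the lex order on $\binom{[n]}{2}$, so $G=\cL(n,e')$ with $e'=|D(\cG)|+(n-1)$. Finally, if $D(\cG)$ is such an initial segment with its last cell --- the top cell in the second-to-last column --- deleted, the same computation gives $G=\cL(n,e')-f$, and one verifies that $f$ is exactly the edge named in the definition of \emph{lexish}. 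The delicate part is the bookkeeping: matching ``add one edge to the $(2,3,1)$-lex $3$-graph'' with ``extend the downset by one cell'', then with ``extend the lex graph by one edge'', while pinning down $e'$ and, in the \emph{lexish} sub-case, the identity of the deleted edge and the uniqueness of the largest clique of $\cL(n,e')$.

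The remaining case $\partial_2\cG\in\cP_n$ is handled by inspecting the five graphs. After absorbing the isolated vertices needed to bring the vertex count to $n$, one has $(K_3\vee E_1)\cup E_{n-5}=K_4\cup E_{n-4}=\cL(n,6)$, a lex graph, while $(K_2\vee E_{n-5})\cup E_2$ and $(K_2\vee E_{n-4})\cup E_1$ are the books $(K_2\vee E_t)\cup E_{n-t-2}$ with $t=n-5$ and $t=n-4$. For $K_3\vee E_{n-4}$ and $K_4\vee E_{n-5}$ one computes their triangle counts and independent-set counts explicitly and checks that, for the values of $t$ at which the corresponding $3$-graph is a maximizer of $\cH_3(n,t)$, the lex or lexish graph with the fewest edges subject to having at least $t$ triangles is at least as good, so that $G$ may be replaced by such a graph. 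Since $n\ge 32$, the sixteen transient exceptions of Theorem~\ref{thm:2indLARGE} do not occur, so no further cases arise.

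I expect the main obstacle to be the lower-shadow analysis of the $(2,3,1)$-lex style case --- establishing, uniformly in $t$, that $\partial_2$ of every $(2,3,1)$-lex style $3$-graph is exactly a lex graph, a lexish graph, or a book $(K_2\vee E_t)\cup E_{n-t-2}$, with all edge counts and the deleted edge identified precisely --- together with the explicit comparisons needed to dispose of the two less trivial graphs in $\cP_n$.
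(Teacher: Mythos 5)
Your overall route is the intended one: the paper gives no separate proof of this theorem, deriving it from Theorem~\ref{thm:2indLARGE} exactly as you do, via the equivalence lemma of Section~\ref{sub:our_problem} and the observation that for a shifted $3$-graph the lower shadow is read off from the downset. Your analysis of the $(2,3,1)$-lex style case (first-row-only downsets give the book $(K_2\vee E_t)\cup E_{n-t-2}$; downsets containing the whole first row give lex or lexish graphs, since the lex order on $\binom{[n]}{2}$ is the pairs through $0$ followed by the lex order on $B_n$) is correct in outline, and the deferred bookkeeping there is routine.

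The genuine gap is in your treatment of $\cP_n$. First, a concrete error: $K_4\cup E_{n-4}$ is \emph{not} $\cL(n,6)$; for $n\ge 7$ the lex graph with $6$ edges is the star $K_{1,6}$ plus isolated vertices, which has no triangles ($K_4\cup E_{n-4}$ is the \emph{colex} graph $\cC(n,6)$). Second, and more seriously, your plan to show that $K_3\vee E_{n-4}$ and $K_4\vee E_{n-5}$ (and $K_4\cup E_{n-4}$) are dominated by lex or lexish graphs cannot succeed: these graphs are persistent exceptions precisely because they strictly beat every $(2,3,1)$-lex style competitor, hence every lex, lexish, or book graph, at their triangle counts. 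For instance, at $t=4$ the graph $K_4\cup E_{n-4}$ has $5\cdot 2^{n-4}=20\cdot 2^{n-6}$ independent sets, whereas the lex graph with fewest edges having at least $4$ triangles, $\cL(n,n+3)$, has only $17\cdot 2^{n-6}+1$, and the book $(K_2\vee E_4)\cup E_{n-6}$ has $18\cdot 2^{n-6}$; adding edges only decreases $i(G)$, so no lex, lexish, or book graph reaches $20\cdot 2^{n-6}$. A similar computation rules out absorbing $K_3\vee E_{n-4}$ (with $3n-11$ triangles and $2^{n-3}+6$ independent sets after padding with an isolated vertex) and $K_4\vee E_{n-5}$. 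In other words, the three non-book members of $\cP_n$ must survive as additional alternatives for $G$, as the Main Theorem in the introduction (which explicitly allows persistent exceptions) indicates; the clean trichotomy you are trying to prove does not hold for those values of $t$, and the step where you ``check that the lex or lexish graph \dots is at least as good'' would fail.
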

%For nearly all of these counterexamples, $\partial(\mathcal{H})$ is colex, with most of those being complete graphs.  These examples are presented in the following table:
\begin{table}
\begin{center}
\begin{tabular}{|c|c|}
\hline
$n$ &$\partial_2(\mathcal{H})$\\
\hline
$7$ &$K_5$\\
$8$ &$K_5$, $K_6$, $K_7$\\
$9$ &$K_5$, $K_6$, $K_7$, $K_8$, $K_9-K_{1,6}$\\
$10$ &$K_9$ \\
$11$ &$K_{10}$, $K_{11}-K_{1,9}$\\
$12$ &$K_{11}$\\
$14$ &$K_{13}$, $K_{13}-e$\\
$16$ &$K_{15}$\\
\hline
\end{tabular}
\end{center}
\caption{All exceptions to the maximizer being $(2,3,1)$-lex style when $n<32$.}
\label{table:exceptions}
\end{table}

%\begin{rmk}
%The exceptions are all split graphs.  They are.
%\end{rmk}

\section{Counting 2-independent Sets in Shifted 3-graphs}\label{sec:counting2indsets}

In this section we will develop a way to count $2$-independent sets in shifted $3$-graphs.  This will result in a translation of the problem to an optimization problem that is easier to visualize.

\begin{defi}
Given $r\ge s\ge 2$, suppose $I\subseteq [n]$ is a set of size at least $s$. Let $I_s$ be the $s$-set consisting of the $s$ smallest elements of $I$, and let $J$ be the $r-s$ smallest elements of $[n]\wo I_s$. Define the \emph{minimal edge of $I$} to be $E_0(I) = I_s \cup J$.   Note that $E_0(I)$ is the  unique $\comple$-minimal set in $\binom{[n]}{r}$ that has $|E\cap I| \geq s$.
\end{defi}

\begin{rmk}
For $r=3, s=2$ and $I\subset[n]$ of size at least 2, the minimal edge of $I$ is $E_0(I) = \{a_1,a_2,b\}$ where $a_1$ and $a_2$ are the two smallest elements of $I$ and $b = \min\{i\in[n]: i\neq a_1,a_2\}$. 
\end{rmk}

The purpose of defining the minimal edge of a set $I$ is that $I$ is $s$-independent in a shifted $r$-graph $\cH$ exactly when $E_0(I)$ is not in $\mathcal{H}$.

\begin{lem}\label{lem:Sindcondition}
Let $\cH$ be a shifted $r$-graph and consider a set  $I\subseteq [n]$ with $|I|\geq s$.   The set $I$ is $s$-independent in $\cH$ if and only if $E_0(I)\notin \cE(\cH)$. 
\end{lem}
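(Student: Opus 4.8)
The plan is to prove both directions, with the interesting content lying in the "only if" direction, which is exactly where shiftedness of $\cH$ is used. First I would dispatch the easy "if" direction: suppose $E_0(I) \in \cE(\cH)$. By construction $E_0(I) = I_s \cup J$ where $I_s$ consists of the $s$ smallest elements of $I$, so $I_s \of I$ and hence $\abs{I \cap E_0(I)} \ge \abs{I_s} = s$. Thus $I$ is not $s$-independent in $\cH$, since $E_0(I)$ is an edge witnessing a too-large intersection. (Equivalently: contrapositive gives that if $I$ is $s$-independent then $E_0(I) \notin \cE(\cH)$.)

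For the "only if" direction I would argue the contrapositive: assume $I$ is \emph{not} $s$-independent in $\cH$, and show $E_0(I) \in \cE(\cH)$. Not being $s$-independent means there exists an edge $E \in \cE(\cH)$ with $\abs{E \cap I} \ge s$; since $\abs{E} = r \ge s$, we may fix a subset $K \of E \cap I$ with $\abs{K} = s$ and write $E = K \cup L$ with $\abs{L} = r - s$. The key claim is that $E_0(I) \comple E$ in the compression order, so that applying Lemma~\ref{lem:vectorshift} (with $A = E_0(I)$, $B = E$) and using that $\cH$ is shifted (so $\cH_{E \to E_0(I)} = \cH$) yields $E_0(I) \in \cH$. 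To establish $E_0(I) \comple E$: write $E_0(I) = I_s \cup J$ with $I_s$ the $s$ smallest elements of $I$ and $J$ the $r-s$ smallest elements of $[n] \wo I_s$. Now $I_s \comple K$ because $K$ is an $s$-subset of $I$ while $I_s$ is the $\comple$-smallest $s$-subset of $I$ (each of its elements is bounded above by the corresponding element of any $s$-subset of $I$). And $J \comple L$ because $J$ consists of the $r-s$ globally smallest elements of $[n] \wo I_s$ — I need a small argument here that $L$ is (or can be taken to be) disjoint from $I_s$ and that $J$, being an initial segment of $[n]\wo I_s$, is $\comple$-below any $(r-s)$-set drawn from $[n]\wo I_s$; one subtlety is that $L$ might intersect $I_s$, which I would handle by noting $L \of E \wo K \of [n] \wo K$ and $I_s$ may not be contained in $K$, so I would instead choose $K$ more carefully, taking $K$ to contain $I_s \cap E$ and be padded out within $E \cap I$ — actually the cleanest route is: since $\abs{E \cap I} \ge s$, the set $I_s' := (E\cap I)_s$ (the $s$ smallest elements of $E \cap I$) satisfies $I_s \comple I_s'$ and $I_s' \of E$; then take $K = I_s'$ so $L = E \wo I_s'$ is disjoint from $I_s'$, and since $J \of [n]\wo I_s$ is an initial segment while — hmm, $J$ sits in $[n]\wo I_s$ not $[n]\wo I_s'$. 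I would resolve this last wrinkle by observing $I_s \comple I_s'$ and $J$ is the initial segment of $[n]\wo I_s$; one then checks directly via the counting characterization $\abs{A \cap (-\infty,x]} \ge \abs{B\cap(-\infty,x]}$ that $E_0(I) = I_s \cup J \comple I_s' \cup L = E$, possibly invoking Lemma~\ref{lem:union} after arranging disjointness, or just doing the counting inequality by hand.

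The main obstacle I anticipate is precisely this bookkeeping around which $s$-subset of $E \cap I$ to use and ensuring the disjointness hypothesis of Lemma~\ref{lem:union} (namely $A_1 \cap A_2 = B_1 \cap B_2 = \emptyset$) is genuinely met, since $J$ is defined relative to $[n] \wo I_s$ rather than relative to the particular edge $E$. I expect the slickest fix is to avoid Lemma~\ref{lem:union} entirely and instead verify $E_0(I) \comple E$ directly from the characterization $A \comple B \iff \forall x\in\R,\ \abs{A \cap (-\infty,x]} \ge \abs{B \cap (-\infty,x]}$: for each threshold $x$, $\abs{E_0(I) \cap (-\infty,x]} = \abs{I_s \cap (-\infty,x]} + \abs{J \cap (-\infty,x]}$, and since $I_s$ comprises the $s$ smallest elements of $I$ (so it is "as far left as possible" among $s$-subsets of $I$) and $J$ comprises the smallest available elements outside $I_s$, each summand dominates the corresponding count for $E$. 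Once $E_0(I) \comple E$ is in hand, Lemma~\ref{lem:vectorshift} plus shiftedness finishes it immediately: $E_0(I) \in \cH_{E \to E_0(I)} = \cH$.
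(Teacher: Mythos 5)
Your plan follows the paper's proof almost exactly: the easy direction via $\abs{I\cap E_0(I)}\ge s$, and the hard direction by establishing $E_0(I)\comple E$ for a witnessing edge $E$ and then invoking Lemma~\ref{lem:vectorshift} together with shiftedness ($\cH_{E\to E_0(I)}=\cH$). Your ``cleanest route''---taking $K$ to be the $s$ smallest elements of $E\cap I$ and $L=E\wo K$---is precisely the paper's decomposition ($E_s$ and $F$ there). The one step you flag as a wrinkle but do not close is exactly where the paper's proof does its only real work: one must check that $F$ is disjoint from $I_s$ (not merely from $E_s$), so that $F\of [n]\wo I_s$ and the $\comple$-minimality of $J$ among $(r-s)$-subsets of $[n]\wo I_s$ gives $J\comple F$; Lemma~\ref{lem:union} then applies with no further fuss, since its disjointness hypotheses $I_s\cap J=\emptyset$ and $E_s\cap F=\emptyset$ hold by construction. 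The paper's argument for $F\cap I_s=\emptyset$ is one line: any $x\in F\cap I_s$ lies in $E\cap I$ and is among the $s$ smallest elements of $I$, hence among the $s$ smallest elements of $E\cap I$, hence in $E_s$---contradicting $x\in F=E\wo E_s$. Your fallback of verifying $E_0(I)\comple E$ directly from the counting characterization also works (a short case split on whether $\abs{I\cap(-\infty,x]}\ge s$), but the disjointness observation is shorter and lets Lemma~\ref{lem:union} do the bookkeeping. In short: right approach, correct in outline, with one genuinely needed sub-argument left as an acknowledged to-do.
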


\begin{proof}
Suppose that $I$ is an $s$-independent set.  Then $E_0(I)\notin \cE(\cH)$ since $|I\cap E_0(I)| \geq s$. 
Suppose now that $I$ is not an $s$-independent set. There exists an edge $E\in \cH$ such that $\abs{E\cap I}\ge s$. Let $E_s$ be the set of the $s$ smallest elements of $E\cap I$, and $F$ be $E\wo E_s$. Note that, with the notation of the previous definition, $I_s\comple E_s$, since $I_s$ is the unique $\comple$-minimal $s$ set in $I$. It is also true that $J \comple F$. To see this note first that $F\of [n]\wo I_s$; any $x\in F\cap I_s$ would have to be one of the $s$ smallest elements of $E \cap I$, hence in $E_s$, a contradiction. Now $J\comple F$ since $J$ is the unique $\comple$-minimal $(r-s)$-set in $[n]\wo I_s$. By Lemma~\ref{lem:union} we have $E_0(I) = I_s\cup J \comple E_s\cup F$. Now by Lemma~\ref{lem:vectorshift}, since $E\in \cH$, we have
\[
	E_0(I) \in \cH_{E \to E_0(I)} = \cH,
\]
the last equality holding since $\cH$ is shifted.
\end{proof}

\begin{cor}
\label{cor:Sindcondition}
Let $I\subset[n]$ with $|I|\geq s$. Suppose $\cH' = \cH + E$ and that $\cH'$ and $\cH$ are shifted $r$-graphs.  Then $I\in \cI_s(\cH)\setminus \cI_s(\cH')$ if and only if $E_0(I) =E$.
\end{cor}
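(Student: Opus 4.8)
The plan is to deduce this directly from Lemma~\ref{lem:Sindcondition}, using the fact that both $\cH$ and $\cH' = \cH + E$ are shifted and differ in exactly one edge, namely $E$. First I would fix a set $I \of [n]$ with $\abs{I} \ge s$ and apply Lemma~\ref{lem:Sindcondition} to \emph{both} hypergraphs: $I \in \cI_s(\cH)$ iff $E_0(I) \notin \cE(\cH)$, and $I \in \cI_s(\cH')$ iff $E_0(I) \notin \cE(\cH')$. Since $\cE(\cH') = \cE(\cH) \cup \set{E}$, the membership status of $E_0(I)$ changes between the two hypergraphs precisely when $E_0(I) = E$ (note $E \notin \cE(\cH)$, since otherwise $\cH + E = \cH$ and the statement is vacuous, or one interprets $\cH'$ as having strictly more edges).

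Concretely: the set $I$ lies in $\cI_s(\cH) \wo \cI_s(\cH')$ iff $I$ is $s$-independent in $\cH$ but not in $\cH'$, which by Lemma~\ref{lem:Sindcondition} happens iff $E_0(I) \notin \cE(\cH)$ and $E_0(I) \in \cE(\cH')$. Because $\cE(\cH') \wo \cE(\cH) = \set{E}$, these two conditions together are equivalent to $E_0(I) = E$. This is essentially a one-line argument once Lemma~\ref{lem:Sindcondition} is in hand, so there is no real obstacle; the only thing to be slightly careful about is the degenerate case in which $E$ was already an edge of $\cH$, which I would handle by noting that the hypothesis $\cH' = \cH + E$ with $\cH'$, $\cH$ both genuinely being shifted $r$-graphs (and the implicit intent that $E$ is a new edge) rules this out, or else the claim holds trivially since then $\cI_s(\cH) \wo \cI_s(\cH') = \emptyset$ and no $I$ has $E_0(I) = E$ forcing a change.

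One subtlety worth a sentence: Lemma~\ref{lem:Sindcondition} requires the hypergraph to be shifted for the ``if'' direction (the ``only if'' direction only used $\abs{I \cap E_0(I)} \ge s$), so it is essential that we are told both $\cH$ and $\cH'$ are shifted — applying the lemma to $\cH'$ is what lets us conclude that non-$s$-independence of $I$ in $\cH'$ is \emph{witnessed by} $E_0(I)$ and not merely by some other edge. With that observed, the corollary follows immediately.
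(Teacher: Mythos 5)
Your proof is correct and follows the paper's own argument: apply Lemma~\ref{lem:Sindcondition} to both $\cH$ and $\cH'$, then use $\cE(\cH')\wo\cE(\cH)=\set{E}$ to conclude. The extra remarks about the degenerate case and why shiftedness of both hypergraphs is needed are sound but do not change the essence.
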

\begin{proof}
By Lemma \ref{lem:Sindcondition}, $I\in \cI_s(\cH)$ if and only if $E_0(I) \notin \cH$ and $I\notin \cI_s(\cH')$ if and only if $E_0(I) \in \cH'$.  Thus,  $I \in \cI_s(\cH)\setminus \cI_s(\cH')$ if and only if $E_0(I) = E = \cH'\setminus\cH$.
\end{proof}

Now we are able to calculate the number of sets that are lost when an edge is added to a shifted hypergraph.

\begin{lem}\label{lem:cost}
Let $\cH$ be a shifted 3-graph on vertex set $[n]$, let $E = \{i,j,k\}$ and suppose that $\cH' = \cH + E$ is also shifted.  Then
$$i_2(\cH') = i_2(\cH) - c_{ijk}$$
where
$$c_{ijk} = \begin{cases}
2^{n-1} &\text{if $\{i,j,k\}=\{0,1,2\}$}\\
2^{n-k} &\text{if $i=0,j=1$ and $k\neq 2$}\\
2^{n-k-1} &\text{if $i=0$ and $j> 1$}\\
0  &\text{if $i\neq 0$}
\end{cases}.$$

\end{lem}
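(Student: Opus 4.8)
The plan is to count, for each edge type, the number of $2$-independent sets $I$ that are destroyed when we add the edge $E$, using Corollary~\ref{cor:Sindcondition}: these are exactly the sets $I$ with $|I|\ge 2$ and $E_0(I) = E$. So the whole proof reduces to counting, for a given $E = \{i,j,k\}$ with $i<j<k$, how many $I\subseteq[n]$ have $E_0(I) = E$, and then observing $i_2(\cH') = i_2(\cH) - c_{ijk}$ with $c_{ijk}$ that count. Recall from the remark after the definition of minimal edge that for $r=3,s=2$ we have $E_0(I) = \{a_1,a_2,b\}$ where $a_1<a_2$ are the two smallest elements of $I$ and $b = \min([n]\wo\{a_1,a_2\})$.

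First I would dispose of the case $i\ne 0$. If $I$ has $E_0(I) = \{i,j,k\}$ with $i\ne 0$, then the two smallest elements $a_1,a_2$ of $I$ satisfy $a_1 \ge i \ge 1$, so $0\notin I$; but then $b = \min([n]\wo\{a_1,a_2\}) = 0 \ne k$, contradicting $b\in E_0(I)$ (indeed $b$ would be the smallest element of $E_0(I)$, forcing $0 = i$). Hence no $I$ is destroyed and $c_{ijk}=0$. This also shows that in every nontrivial case we must have $i=0$, i.e.\ $a_1 = 0$, so $0\in I$, and then $b = \min([n]\wo\{0,a_2\})$.

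Now suppose $i=0$, so $E = \{0,j,k\}$ and $I$ must contain $0$; write $a_2 = \min(I\wo\{0\})$. We split on $b$: if $a_2 = 1$ then $b = 2$; if $a_2 \ge 2$ then $b = 1$. \textbf{Case $\{0,j,k\}=\{0,1,2\}$:} here we need $a_2 = 1$ (forcing $b=2$), and then $I$ can contain $0,1$ and any subset of $\{3,4,\dots,n-1\}$, but must not contain $2$ among its two smallest --- automatically satisfied since $0,1$ are already the two smallest. Actually $I$ just needs $\{0,1\}\subseteq I$ with $2\notin$ forced nowhere; the two smallest are $0,1$ regardless of whether $2\in I$, so $I$ ranges over $\{0,1\}\cup S$ for any $S\subseteq\{2,3,\dots,n-1\}$, giving $2^{n-2}$ choices --- wait, that is $2^{n-1}$? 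Let me recount: the free coordinates are $\{2,3,\dots,n-1\}$, which has $n-2$ elements, giving $2^{n-2}$; but the stated answer is $2^{n-1}$. The resolution is that when $\{i,j,k\}=\{0,1,2\}$ we must also allow $I = \{0,1\}\cup S$ for $S\subseteq\{2,\dots,n-1\}$ \emph{and} the case is genuinely $2^{n-2}$ unless I also count... I would recheck this against the paper's intent: likely $b=2$ is automatic because $2$ may or may not lie in $I$, and since $E_0$ only looks at the two smallest elements of $I$ plus the smallest element outside them, the count is over all $I\ni 0,1$, i.e.\ $2^{n-2}$; if the paper wants $2^{n-1}$ then presumably the convention is that $I$ with $|I|\ge 2$ containing $0$ but with second-smallest element arbitrary, and I would carefully re-derive. \textbf{Case $i=0,j=1,k\ne 2$:} need $a_2 = 1$ so that $b$ can be... no: if $a_2=1$ then $b=2\ne k$, contradiction; so instead we need $j=1$ to be the second-smallest, i.e.\ $a_2 = 1$, forcing $b = 2$, which must equal $k$ --- but $k\ne 2$. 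So actually we need $a_1 = 0$, $a_2 = 1$ is \emph{not} right; rather $\{0,j,k\}$ with $j=1$ means the two smallest elements of $I$ are $0$ and $1$, hence $b = 2$, hence $k=2$, contradiction unless... I think the correct reading is that $a_2 = j = 1$ but then $I$ must also omit $2,3,\dots,k-1$ so that among $[n]\wo\{0,1\}$ the smallest \emph{that must be included}... no, $b$ is the smallest of $[n]\wo\{a_1,a_2\}$, full stop, not depending on $I$. So $b=2$ always when $\{a_1,a_2\}=\{0,1\}$. Hence for $E = \{0,1,k\}$, $k\ge 3$, we need $\{a_1,a_2\} = \{0,1\}$ impossible. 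The only way out: $a_2 \ne 1$, so $1\notin I$, then $\{a_1,a_2\} = \{0,a_2\}$ with $a_2\ge 2$, giving $b=1$, and we need the edge to be $\{0,a_2,1\} = \{0,1,a_2\}$, so $a_2 = k$. Thus $I\ni 0$, $1\notin I$, $\min(I\wo\{0\}) = k$, i.e.\ $I\wo\{0\}\subseteq\{k,k+1,\dots,n-1\}$ and contains $k$; that is $2^{n-1-k}$ choices for the part above $k$... times the requirement $k\in I$, so $2^{n-k-1}$; the stated answer is $2^{n-k}$, off by a factor, again suggesting I should recount whether $k\in I$ is forced (it is, since $E_0(I)$ uses the two smallest elements of $I$, and $k$ must be one of them) --- I would redo this bookkeeping carefully in the final writeup. \textbf{Case $i=0$, $j>1$:} then $\{a_1,a_2\} = \{0,j\}$ (so $1\notin I$, $\min(I\wo\{0\}) = j$), $b = 1$, and we need $\{0,j,1\}$... no, $E = \{0,j,k\}$ with $1\le j$; if $j\ge 2$ then $b=1$ is the smallest element outside, so $E_0(I) = \{0,j,1\}$, requiring $k$... this only matches $\{0,1,j\}$, contradicting $j>1$ unless the third element is genuinely determined differently. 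The honest resolution, which I will work out fully, is that $E_0(I)$ for $I = \{0,j\}\cup(\text{stuff}\ge j)$ equals $\{0,j,1\}$ if $j\ge 2$, \emph{unless} $1$ is... I realize the subtlety: when $j\ge 2$, is $1\in[n]\wo\{0,j\}$? Yes, so $b=1$. Therefore $E_0(I) = \{0,1,j\}$, which has the form of the \emph{previous} case, not this one. So the case ``$i=0$ and $j>1$'' must refer to $E$ itself being $\{0,j,k\}$ with $j\ge 2$, which as I just argued is never a minimal edge $E_0(I)$ of any $I$ --- unless $\cH$ being shifted is used. Hmm, but Corollary~\ref{cor:Sindcondition} already assumed shiftedness. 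So I suspect the correct statement is: for $E=\{0,j,k\}$ with $j\ge 2$, the destroyed sets are those $I$ with $\{a_1,a_2\}=\{j-1,?\}$... I clearly need to recompute; the main obstacle is precisely nailing these case boundaries and the exact exponents.

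\textbf{Main obstacle.} The genuine difficulty is the careful case analysis of exactly which $I$ satisfy $E_0(I) = E$, in particular getting the exponents right and correctly identifying which values of $j$ (namely $j=1$ vs.\ $j\ge 2$) force $b=2$ vs.\ $b=1$, and how that interacts with which of $\{j,k\}$ ends up being the ``$b$'' coordinate. Once the correspondence $I\leftrightarrow(\text{subset of a tail of }[n])$ is pinned down in each case, the count is a clean power of two and the lemma follows immediately from Corollary~\ref{cor:Sindcondition} by summing (actually just noting the single edge added destroys exactly $c_{ijk}$ sets and creates none). I would organize the final proof as: (1) invoke Corollary~\ref{cor:Sindcondition} to say the lost sets are exactly $\{I : E_0(I) = E\}$; (2) handle $i\ne0$ (empty); (3) for $i=0$, parametrize $I$ by whether $1\in I$, compute $b$ accordingly, match against $E$, and count; (4) collect the four values of $c_{ijk}$.
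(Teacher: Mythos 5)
Your overall plan is sound and matches the paper: invoke Corollary~\ref{cor:Sindcondition} to reduce the lemma to counting $\setof{I}{E_0(I)=E}$, and then case-analyze. But there is a genuine error at the start of your $i=0$ analysis that propagates through every subsequent case and is exactly why you keep coming out low by a factor of $2$ (or, in the third case, by everything).

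The mistake is the inference ``$i=0$, i.e.\ $a_1 = 0$, so $0\in I$.'' That biconditional is false. Recall $E_0(I) = \set{a_1,a_2,b}$ where $a_1 < a_2$ are the two smallest elements of $I$ and $b = \min([n]\wo\set{a_1,a_2})$. From $i=0$, i.e.\ $\min E_0(I) = 0$, you only get $\min\set{a_1,b}=0$, so \emph{either} $a_1=0$ (and $0\in I$) \emph{or} $b=0$ (and $0\notin I$). Indeed, whenever $0\notin I$, $b$ is automatically $0$, and $E_0(I) = \set{0,a_1,a_2}$ with $a_1,a_2 \ge 1$. You only ever count the $0\in I$ family. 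For $E=\set{0,1,2}$ this gives $2^{n-2}$, and you are missing $I$ with $\set{a_1,a_2}=\set{0,2}$, $1\notin I$ (another $2^{n-3}$) and $\set{a_1,a_2}=\set{1,2}$, $0\notin I$ (another $2^{n-3}$), which bring the total to $2^{n-1}$. For $E=\set{0,1,k}$, $k\ge 3$, you find $\set{a_1,a_2}=\set{0,k}$ with $1\notin I$ ($2^{n-k-1}$ sets) but miss $\set{a_1,a_2}=\set{1,k}$ with $0\notin I$ (another $2^{n-k-1}$), giving $2^{n-k}$. For $E=\set{0,j,k}$ with $j\ge 2$, the $0\in I$ family is in fact empty (as you correctly argued: it would force $b=1$ and hence $j=1$), but the $0\notin I$ family with $\set{a_1,a_2}=\set{j,k}$ is nonempty and contributes exactly $2^{n-k-1}$. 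So the case ``$i=0$, $j>1$'' is not vacuous; it is realized entirely by sets $I$ not containing $0$.

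To repair the proof, replace the parametrization ``$0\in I$, split on whether $1\in I$'' by the parametrization used in the paper: for $E=\set{0,j,k}$, decide which two of $\set{0,j,k}$ play the role of $\set{a_1,a_2}$ (the two smallest elements of $I$) and which plays the role of $b$; for each choice verify that $b=\min([n]\wo\set{a_1,a_2})$ indeed equals the remaining element of $E$, discard the incompatible choices, and count $I$ as $\set{a_1,a_2}$ union an arbitrary subset of $\set{a_2+1,\dots,n-1}$. Your $i\ne 0$ argument is fine (it is exactly the observation that $0\in E_0(I)$ always), and once the $i=0$ cases include the $0\notin I$ contributions, you will recover precisely the stated exponents.
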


\begin{rmk}
We will refer to $c_{ijk}$ as the \emph{cost} of the edge $\{i,j,k\}$.
\end{rmk}

\begin{proof}
By Corollary \ref{cor:Sindcondition}, $I \in i_2(\cH)\setminus i_2(\cH')$ if and only if $E_0(I) = E$.  Thus, to determine the cost of an edge $E$ we must count the number of sets $I$ such that $E_0(I) = E$.

If $E = \{0,1,2\}$ we are counting sets such that $E_0(I) = \{0,1,2\}$.  These are exactly those sets having two smallest elements $0$ and $1$, $0$ and $2$, or $1$ and $2$.  The number of sets with this property is $2^{n-2} + 2^{n-3} + 2^{n-3} = 2^{n-1}$. Thus, $c_{012} = 2^{n-1}$.  

Suppose that $\{0,1,k\}$ is added to a hypergraph where $k\neq 2$.  Here we count sets $I$ such that $E_0(I) = \{0,1,k\}$.  These are the sets with smallest elements $0$ and $k$ or $1$ and $k$.  The number of sets with this property is $2^{n-k-1}+2^{n-k-1} = 2^{n-k}$.  Thus $c_{01k} = 2^{n-k}$ for $k\neq 2$. 

Suppose now $E = \{0,j,k\}$ with $j> 1$.  Here, $E_0(I) = E$ if and only if the two smallest elements of $I$ are $j$ and $k$.  There are $2^{n-k-1}$ of these meaning $c_{0jk} = 2^{n-k-1}$ when $j> 1$.

Finally, if $0\notin E$ then it is not one of the edges of the form $E = \{a_1,a_2,b\}$ where $b = \min\{i\in [n]: i \neq a_1, a_2\}$.  Thus, the cost of $\{i,j,k\}$ where $i\neq 0$ is $0$.
\end{proof}

Note that $\displaystyle{\sum_{i<j<k} c_{ijk} = 2^{n} - (n+1)}$ meaning that $i_2(\mathcal{K}^3_n)= n+1$ where $\mathcal{K}^3_n$ is the complete $3$-graph on $n$ vertices.  The $2$-independent sets in $\mathcal{K}_n^3$ are the empty set and all the singletons. 

Let $\cH$ be a $3$-graph with vertex set $[n]$.  We will visualize $\cH$ by letting its edges be $1\times 1\times 1$ cubes labeled by the vertices in the edge in increasing order.  Then we can think of these $1\times 1\times 1$ cubes inside an $(n-2)\times (n-2) \times (n-2)$ cube labeled as in Figure \ref{fig:cube}.  Figure \ref{fig:complete} shows the edges of the complete hypergraph on $7$ vertices inside a $5\times 5\times 5$ cube with the visible cubes labeled.

\begin{multicols}{2}
\begin{figure}[H]
\begin{center}
\begin{tikzpicture}[scale=.7]
\draw[fill = light-gray, opacity = .85] (5,4,3) -- (5,1,3) -- (0,0,0) -- cycle;
%\draw[fill = medium-gray, opacity=.85] (5,4,3) -- (5,1,3) -- (2,1,0) -- cycle;
\draw[fill = dark-gray,opacity=.85] (5,1,3) -- (2,1,0) -- (0,0,0) -- cycle;
%cube
\draw (0,0,0) -- (0,3,0) -- (3,3,3) -- (3,0,3) -- cycle;
\draw (2,1,0) -- (2,4,0) -- (5,4,3) -- (5,1,3) -- cycle;
\draw (0,0,0) -- (2,1,0);
\draw (0,3,0) -- (2,4,0);
\draw (3,3,3) -- (5,4,3);
\draw (3,0,3) -- (5,1,3);
%tetrahedron
\draw[ultra thick] (2,1,0) -- (5,4,3);
\draw[ultra thick] (0,0,0) -- (2,1,0);
\draw[ultra thick] (5,1,3) --(2,1,0);
\draw[ultra thick] (5,1,3) -- (0,0,0);
\draw[ultra thick] (5,4,3) -- (0,0,0);
\draw[ultra thick] (5,4,3) -- (5,1,3);
%labels
%first coordinate
\draw (-.25,.25,0) node {$0$};
\draw (-.25,.75,0) node {$1$};
\draw (-.25, 2,0) node {$\vdots$};
\draw (-.75, 2.75,0) node {$n-3$};
%second coordinate
\draw (0,-.25,0) node {$1$};
\draw (.25,-.5,0) node {$2$};
\draw (.75,-.65,0) node {$\ddots$};
\draw (1.25,-1.25,0) node {$n-2$};
%third coordinate
\draw (0,3.25,0) node {$2$};
\draw (0,3.25,-.75) node {$3$};
\draw (0,3.25, -1.75) node {$\iddots$};
\draw (.5,3.2, -2.5) node {$n-1$};
\end{tikzpicture}
\caption{The labeling of the cube.  The shaded tetrahedron represents the collection of $1\times 1\times 1$ cubes that have labels in increasing order.}
\label{fig:cube}
\end{center}
\end{figure}

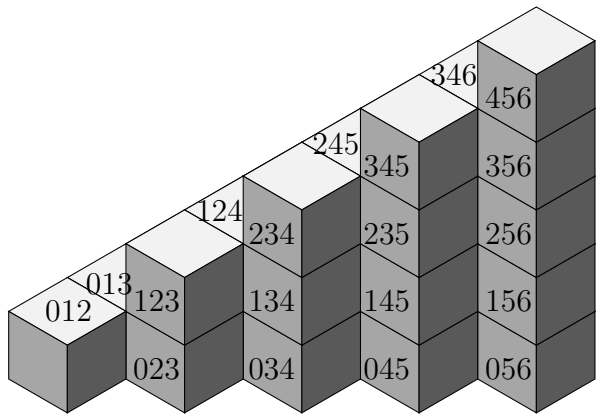
\begin{figure}[H]
\begin{center}
\begin{tikzpicture}[scale=.9]
\planepartition{{1,2,3,4,5},{1,2,3,4},{1,2,3},{1,2},{1}}
\draw (-3.6,-1.65,-0.4) node {$012$};
\draw (-3,-1.25,-0.4) node {$013$};
\draw (-2.3,-2.5,-0.4) node {$023$};
\draw (-2.3,-1.5,-0.4) node {$123$};
\draw (-1.6,-.4,-1) node {$124$};
\draw (-.6,-2.5,-0.4) node {$034$};
\draw (-.6,-1.5,-0.4) node {$134$};
\draw (-.6,-.5,-0.4) node {$234$};
\draw (1.1,-.5,-0.4) node {$235$};
\draw (1.1,-2.5,-0.4) node {$045$};
\draw (1.1,-1.5,-0.4) node {$145$};
\draw (0.35,0.83,-0.4) node {$245$};
\draw (1.1,0.5,-0.4) node {$345$};
\draw (1.9,1.65,-0.9)  node {$346$};
\draw (2.9,-2.5,-0.4) node {$056$};
\draw (2.9,-1.5,-0.4) node {$156$};
\draw (2.9,-0.5,-0.4) node {$256$};
\draw (2.9,0.5,-0.4) node {$356$};
\draw (2.9,1.5,-0.4) node {$456$};
\end{tikzpicture}
\caption{Edges of the complete hypergraph on 7 vertices.}
\label{fig:complete}
\end{center}
\end{figure}
\end{multicols}

 Lemma \ref{lem:cost} says that, assuming the hypergraph is shifted, any edge that does not contain $0$ is ``free", i.e., adding such an edge does not cost us any independent sets.  More rigorously, if $E = \{i,j,k\}$ with $i\neq 0$ we have  $i_2(\cH) = i_2(\cH + E)$.  In the cube picture this means that any edge that is not in the bottom layer is free. For this reason, we focus on the downset of $\cH$. The downset of $\cH$ corresponds to edges in the base layer.  Figure \ref{fig:baselayer} shows the cube where we have suppressed the first dimension and show only the edges with non-zero costs.

\begin{figure}[H]
\begin{center}
\begin{tikzpicture}[scale=.75]
\draw[step=1cm,black,very thin] (0,0) grid (1,5);
\draw[step=1cm,black,very thin] (1,1) grid (2,5);
\draw[step=1cm,black,very thin] (2,2) grid (3,5);
\draw[step=1cm,black,very thin] (3,3) grid (4,5);
\draw[step=1cm,black,very thin] (4,4) grid (5,5);
\draw (.5,.5) node {$012$};
\draw (.5,1.5) node {$013$};
\draw (.5,2.5) node {$014$};
\draw (.5,3.5) node {$015$};
\draw (.5,4.5) node {$016$};
\draw (1.5,1.5) node {$023$};
\draw (1.5,2.5) node {$024$};
\draw (1.5,3.5) node {$025$};
\draw (1.5,4.5) node {$026$};
\draw (2.5,2.5) node {$034$};
\draw (2.5,3.5) node {$035$};
\draw (2.5,4.5) node {$036$};
\draw (3.5,3.5) node {$045$};
\draw (3.5,4.5) node {$046$};
\draw (4.5,4.5) node {$056$};
\end{tikzpicture}
\caption{Edges in base layer, $B_7$.}
\label{fig:baselayer}
\end{center}
\end{figure}

We will call each of the squares in $B_n$ a \emph{cell} and label it $(a,b)$ if the edge associated to that square is $\{0,a,b\}$.

Recall that we are restricting ourselves to shifted hypergraphs as we can find a maximizer among the shifted hypergraphs. By definition a shifted hypergraph $\cH$ on vertex set $[n]$ satisfies the following condition: if $\{a,b,c\}\in \cE(\cH)$ then $\{i,j,k\}\in\cE(\cH)$ whenever $i\leq a$, $j\leq b$, and $k\leq c$.  In  $B_n$ this says that if $\{0,b,c\}\in \cE(\cH)$ then $\{0,j,k\}\in\cE(\cH)$ for all $j\leq b$ and $k\leq c$.  That is, if we include a cell $(b,c)$ in our hypergraph, we must also include all cells that are to the left or below. 

Each cell has an associated cost as given in Lemma \ref{lem:cost} and an associated amount of \emph{space}: the number of edges we could get for that cost, given that taking those edges results in a shifted hypergraph.  The cost and space for cells in $B_7$ are given in Figure \ref{fig:CostAndSpace}.

\begin{center}
\begin{figure}[H]
\begin{multicols}{2}
\begin{tikzpicture}[scale=.75]
\draw[step=1cm,black,very thin] (0,0) grid (1,5);
\draw[step=1cm,black,very thin] (1,1) grid (2,5);
\draw[step=1cm,black,very thin] (2,2) grid (3,5);
\draw[step=1cm,black,very thin] (3,3) grid (4,5);
\draw[step=1cm,black,very thin] (4,4) grid (5,5);
\draw (.5,.5) node {$64$};
\draw (.5,1.5) node {$16$};
\draw (.5,2.5) node {$8$};
\draw (.5,3.5) node {$4$};
\draw (.5,4.5) node {$2$};
\draw (1.5,1.5) node {$8$};
\draw (1.5,2.5) node {$4$};
\draw (1.5,3.5) node {$2$};
\draw (1.5,4.5) node {$1$};
\draw (2.5,2.5) node {$4$};
\draw (2.5,3.5) node {$2$};
\draw (2.5,4.5) node {$1$};
\draw (3.5,3.5) node {$2$};
\draw (3.5,4.5) node {$1$};
\draw (4.5,4.5) node {$1$};
\end{tikzpicture}

\begin{tikzpicture}[scale=.75]
\draw[step=1cm,black,very thin] (0,0) grid (1,5);
\draw[step=1cm,black,very thin] (1,1) grid (2,5);
\draw[step=1cm,black,very thin] (2,2) grid (3,5);
\draw[step=1cm,black,very thin] (3,3) grid (4,5);
\draw[step=1cm,black,very thin] (4,4) grid (5,5);
\draw (.5,.5) node {$1$};
\draw (.5,1.5) node {$1$};
\draw (.5,2.5) node {$1$};
\draw (.5,3.5) node {$1$};
\draw (.5,4.5) node {$1$};
\draw (1.5,1.5) node {$2$};
\draw (1.5,2.5) node {$2$};
\draw (1.5,3.5) node {$2$};
\draw (1.5,4.5) node {$2$};
\draw (2.5,2.5) node {$3$};
\draw (2.5,3.5) node {$3$};
\draw (2.5,4.5) node {$3$};
\draw (3.5,3.5) node {$4$};
\draw (3.5,4.5) node {$4$};
\draw (4.5,4.5) node {$5$};
\end{tikzpicture}
\end{multicols}
\caption{At left the cost of each cell in $B_7$, at right the space in each cell.}
\label{fig:CostAndSpace}
\end{figure}
\end{center}

 For $D$, a collection of cells, let $C(D)$ be the cost of those cells and $S(D)$ be the amount of room in those cells.

 \begin{rmk}
The space of a cell $(i,j)$ is $i$.  We chose $[n] = \{0,1,\dots, n-1\}$ for this reason.
 \end{rmk}

Our goal, finding a 3-graph on $n$ vertices having $e$ edges with the maximum number of $2$-independent sets, can be rephrased as follows:  find a downset $D$ in $B_n$ such that  $C(D)$ is minimized subject to the condition that $S(D)\geq e$.

For the rest of the paper we will only be concerned with the shape of the downset in the bottom layer.  
Given a downset in $B_n$ that has enough space to accommodate the number of edges we need we can arrange the edges in higher layers to get a shifted 3-graph (often in several ways).  When we discuss the number of $2$-independent sets in $D\subseteq B_n$ we mean the number of $2$-independent sets in any $\cH$ that has downset $D$.

Finally we introduce an order on downsets in $B_n$.  For downsets $D$ and $D'$ we say that $D$ is lex-less than $D'$, or $D<_L D'$, if 
\[\min_{\text{Lex}} D\Delta D' \in D.\]
 Here $\displaystyle{\min_{\text{Lex}} D\Delta D'}$ means the minimum cell in $D\Delta D'$ under the lex ordering on cells in $B_n$.

\begin{defi}  
A downset $D$ in $B_n$ is an \emph{optimal downset} if, for some $e$, $D$ minimizes $C(D)$ among all downsets with space at least $e$ and it is the earliest downset in lex order to do so.
\end{defi}

\section{Local Moves}
\label{sec:moves}

In this section we show certain downsets in $B_n$ do not have as many $2$-independent sets as the downset associated to a $(2,3,1)$-lex style $3$-graph.  Our strategy is to show that, given a downset $D$ that is not $(2,3,1)$-lex style, there exists a downset $D'$ such that $S(D')\geq S(D)$ and $C(D')\leq C(D)$ and $D'<_L D$.  That is, we will show that some downsets that are not $(2,3,1)$-lex style are not optimal downsets. We'll call the switch from $D$ to $D'$ a \emph{local move}.
To talk about the local moves we first need the definition of corner.

\begin{defi}
For a downset $D$ the cell $(a,b)$ is a \emph{corner} of $D$ if it is a maximal element of $D$. 
\end{defi}

The rest of this section is organized into three subsections, one for each of the three types of local moves we will perform.  In Section \ref{subsec:onecellmoves} we will perform ``one cell moves", that is, local moves in which we remove only one cell from $D$.  In Section \ref{subsec:columnmoves} we will perform ``column moves" which are local moves in which we remove a column-like subset of the downset $D$. Finally in Section \ref{subsec:largermoves} we consider a local move that removes a large subset of cells.

\subsection{One Cell Moves}
\label{subsec:onecellmoves}

 First we will consider some local moves where we exchange one cell of a downset $D$ for two cells in $B_n\setminus D$.  To do this, we first define the horizontal distance vector of a downset.
 
 \begin{defi}
 For a downset $D$, let  $(o_1,o_2,\dots,o_k)$ be the sequence of the first coordinates of the corners written in increasing order and let the \emph{horizontal distance vector} be $H(D) = (o_2-o_1,o_3-o_2,\dots, o_k-o_{k-1})$.
\end{defi}

\begin{lem}\label{lem:distancevector}
Let $D$ be a downset with horizontal distance vector $(d_1,d_2,\dots, d_k)$ where $d_i = o_{i+1}-o_i$, the difference between the first coordinates of consecutive corners.  If $3\leq d_i \leq  \frac{o_{i+1}+3}{2} $ then $D$ is not optimal.
\end{lem}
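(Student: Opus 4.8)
The statement compares a downset $D$ with an alternative $D'$ obtained by a ``one cell move'': we want to remove one corner cell and add two cells somewhere lex-earlier, decreasing (or not increasing) the cost while not decreasing the space, and being lex-earlier. Since the cost of a cell $(i,j)$ is $2^{n-j-1}$ (for $i>1$) and its space is $i$, the key numerical facts are: cost depends only on the \emph{row} $j$, doubling each time we go down one row; space depends only on the \emph{column} $i$. So the plan is: take two consecutive corners at first-coordinates $o_i$ and $o_{i+1}$ with gap $d_i = o_{i+1}-o_i$. Because $D$ is a downset and these are corners, column $o_{i+1}$ is strictly shorter than column $o_i$; in fact the top cell of column $o_{i+1}$ sits at some height, and we can try to remove that single top corner cell of column $o_{i+1}$ and instead extend some column to the left of $o_i$ (or add fresh cells in an already-short column) to recover the lost space.

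\textbf{Key steps.} First I would set up notation: let $(a,b)$ be the corner cell at the top of the column with first coordinate $o_{i+1}$, so $a = o_{i+1}$, and removing it frees cost $2^{n-b-1}$ and loses space $a = o_{i+1}$ (the column $a$ had $a$ cells available, but removing one corner cell only loses one unit of space in our accounting — I need to be careful here: the ``space'' $S$ is the sum over cells of their per-cell space $i$, so removing cell $(a,b)$ loses $a$ units). Actually the cleaner reading: removing the single corner cell $(o_{i+1}, b)$ loses $S$-value $o_{i+1}$ and saves $C$-value $2^{n-b-1}$. Then I'd add cells to compensate the space: I want to add cells whose total space is at least $o_{i+1}$ but whose total cost is at most $2^{n-b-1}$, placed lex-earlier. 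The natural place is to stack new cells on top of columns between $o_i$ and $o_{i+1}$ — but those are empty of corners, meaning they're already as tall as the downset allows adjacent... no: since $d_i\ge 3$, there are at least two columns strictly between $o_i$ and $o_{i+1}$, and in a downset these intermediate columns have the same height as column $o_i$ (they can't be taller than their left neighbor, can't be shorter or they'd create... hmm, actually they can be shorter). Let me instead add cells at the \emph{bottom-left}: extend column $o_i$ isn't possible since it's a corner (already maximal), so I'd look at adding a whole new row-segment. The honest approach: add the two cells $(o_i+1, b')$ and $(o_i+2, b')$ for an appropriate row $b'$ one level above the top of columns $o_i+1, o_i+2$ — each has space $\ge o_i+1$, so two of them give space $\ge 2(o_i+1) = 2o_i+2$, and we need this $\ge o_{i+1} = o_i + d_i$, i.e. $o_i + 2 \ge d_i$, which rearranges to $d_i \le o_i+2 \le \frac{o_{i+1}+3}{2}$-ish after using $o_{i+1} = o_i+d_i$; working it out, $d_i \le \frac{o_{i+1}+3}{2}$ is exactly the condition that two new cells in columns near $o_i$ recover the lost space. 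For the cost: the two new cells are in a row $b'$ strictly above $b$ (since columns near $o_i$ are taller than column $o_{i+1}$), so each costs at most $2^{n-b-2}$, hence together at most $2^{n-b-1}$, matching the saved cost. The lex-earlier condition holds because the added cells are in columns $\le o_i+2 < o_{i+1}$, so the symmetric difference's lex-minimum is an added cell.

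\textbf{Main obstacle.} The delicate part is verifying that the two compensating cells can actually be \emph{added} while keeping $D'$ a downset — i.e., that the cells directly below and to the left of the new cells are already in $D$. This is where $d_i \ge 3$ is used (we need room for two columns strictly between $o_i$ and $o_{i+1}$, or at least columns $o_i+1, o_i+2$ to exist and be short enough to accept a new top cell at the right height), and where I must pin down the exact heights of the intermediate columns. A second subtlety is the precise bookkeeping of $C$ and $S$ when the row of the new cells could be as high as $b+1$ versus much higher — I'd want the worst case (new cells only one row above $b$) to still work, which is exactly why the cost bound $2^{n-b-2}+2^{n-b-2}=2^{n-b-1}$ is tight and forces the move to be cost-nonincreasing rather than strictly decreasing in the boundary case; combined with the lex-earlier conclusion this still contradicts optimality. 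I expect roughly: (1) locate the relevant corner and intermediate columns, (2) check the downset property after the swap, (3) do the $\le 4$-term arithmetic for $C$ and $S$ using $3 \le d_i \le \frac{o_{i+1}+3}{2}$, (4) observe lex-earliness, and conclude $D$ is not optimal.
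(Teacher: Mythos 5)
Your proposal is correct and is essentially the paper's own proof: you remove the corner $(o_{i+1},d)$ and add the two cells $(o_i+1,d+1)$ and $(o_i+2,d+1)$, note that each new cell costs exactly half the removed one, that their combined space $2o_i+3\ge o_{i+1}$ is precisely the hypothesis $d_i\le\frac{o_{i+1}+3}{2}$, and that the result is lex-earlier. Your attention to the downset-preservation and the role of $d_i\ge 3$ (so that columns $o_i+1,o_i+2$ lie strictly left of $o_{i+1}$ and have tops at height $d$) is exactly the point the paper leaves implicit.
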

\begin{proof} Let $(a,b)$ and $(c,d)$ be consecutive corners and suppose $3\leq c-a \le \frac{c+3}{2}$.  Since the previous corner is $(a,b)$ we can remove cell $(c,d)$ and replace it with cells $(a+1,d+1)$ and $(a+2,d+1)$ and still have a downset. Let $D' = D- (c,d) + (a+1,d+1)+(a+2,d+1)$.  The move from $D$ to $D'$ is illustrated in Figure \ref{fig:ocmove}. 

\begin{figure}[!ht]
\begin{center}
\begin{tikzpicture}[scale=.75]
\draw (0,0) -- (0,6) -- (6,6) -- cycle;
\draw[fill = lightgray]  (3.5,3.5) -- (3,3.5) -- (3,4.5) -- (1.5,4.5) -- (1.5,5.5) -- (0,5.5) -- (0,0) -- cycle;
\draw [dashed] (1.5,4.75) -- (2,4.75)--(2,4.5);
\draw [dashed] (1.75,4.5) -- (1.75,4.75);
\draw[fill = black] (2.75,4.25) rectangle (3,4.5);
\draw[->] (3,4.5) -- (2.2,4.65);
\draw (-.5,4.25) node {$d$};
\draw (-.5,5.25) node {$b$};
\draw (1.4, 6.2) node {$a$};
\draw (2.9,6.2) node {$c$};
\end{tikzpicture}
\caption{Move occurring in the proof of Lemma \ref{lem:distancevector} for consecutive corners}
\label{fig:ocmove}
\end{center}
\end{figure}
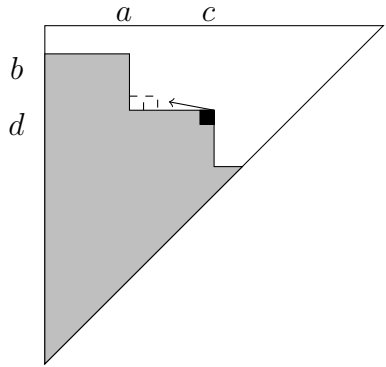

Note the room of cell $(c,d)$ is $c$ and the room in the replacement cells is collectively $2a+3$.   Since $c - a \leq \frac{c+3}{2}$ we have $c \leq 2a + 3$
and so there is at least much space in $D'$.  Moreover, the cost of each of the replacement cells is half the cost of $(c,d)$ and so $C(D) = C(D')$.  Finally $D'<_L D$.  Therefore such a $D$ is not optimal.
\end{proof}

Lemma \ref{lem:distancevector} says that in an optimal downset the horizontal distance between two corners is either small (less than 3) or is large (about half the larger amount of space).  Let's consider first when the horizontal distance between corners is small.  When the horizontal distance between two corners is 1 we will say there is a \emph{short stair} and when the horizontal distance between two consecutive corners is 2 we will say there is a \emph{long stair}.  %What types of staircases can appear in an optimal partition? 

\begin{lem}\label{lem:stairs}
Consider a downset $D$ with horizontal distance vector $H(D)$. If $H(D)$ has three consecutive 1's, two consecutive 2's, or an adjacent 1 and 2 then $D$ is not an optimal downset.
\end{lem}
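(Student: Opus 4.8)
The plan is to handle each of the three patterns in $H(D)$ by a single local move that produces a downset $D'$ with $S(D')\ge S(D)$, $C(D')\le C(D)$, and $D'<_L D$, exactly as in the proof of Lemma~\ref{lem:distancevector}. The key observation in each case is that several ``stair'' cells sitting near the top of consecutive columns are expensive relative to the space they provide, and they can be traded for cheaper cells lower down in earlier columns without losing room. So first I would fix notation: suppose the relevant consecutive corners are $(o_t, h_t), (o_{t+1}, h_{t+1}), (o_{t+2}, h_{t+2})$ (and, for the two-step patterns, $(o_{t+3},h_{t+3})$), where a step $d_i=1$ means $o_{i+1}=o_i+1$ and $d_i=2$ means $o_{i+1}=o_i+2$. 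In a shifted downset consecutive corners satisfy $h_{i+1}<h_i$, so the ``extra'' cells in column $o_{i+1}$ beyond height $h_{i+2}$ form a small block; these are the cells I will remove.

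Next I would carry out the three moves. \emph{Three consecutive $1$'s:} the corner cells $(o_t,h_t)$, $(o_t+1,h_{t+1})$, $(o_t+2,h_{t+2})$ each have space $o_t, o_t+1, o_t+2$ but cost only $2^{n-h_t-1}, 2^{n-h_{t+1}-1}, 2^{n-h_{t+2}-1}$ with $h_t>h_{t+1}>h_{t+2}$; I would remove a suitable set of top cells from these three columns and re-insert them as full new rows starting in column~$1$, using that a brand-new row at height $h$ contributes space $o_{t}+3$ (or more) while costing the same $2^{n-h-1}$ — here one exploits that the three removed cells are consecutive in column-index and in height, so the three halved costs reassemble into the costs of cells one row up, and the space inequality is the $c\le 2a+3$-type inequality that drove Lemma~\ref{lem:distancevector}. \emph{Two consecutive $2$'s} and \emph{an adjacent $1$ and $2$:} these are analogous; a single long stair contributes a horizontal gap of $2$, which already gives two cells' worth of room to absorb into lower rows, and having two such gaps (or one long and one short adjacent) gives enough total room. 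In each case I would write $D'$ explicitly as $D$ minus the identified top cells plus the identified lower cells, check it is a downset (the cells below and to the left of each added cell are present because the added cells lie in early columns at heights already exceeded by corners to their left), verify $C(D')=C(D)$ cell-by-cell via Lemma~\ref{lem:cost}, verify $S(D')\ge S(D)$ via the space formula (space of $(i,j)$ is $i$), and note $D'<_L D$ since the first cell of $D\symd D'$ lies in an earlier column and belongs to $D'$.

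The main obstacle I anticipate is bookkeeping the space inequality cleanly in the boundary cases: when the corners involved are near the bottom-left of $B_n$ (so the columns $1,2,\dots$ into which we want to re-insert rows are already full or nearly full), or when $h_{t+1}$ and $h_{t+2}$ differ by more than $1$ so the removed block is taller and the reassembly of costs is less immediate. I would dispose of this by choosing the removed block to be exactly the cells of column $o_{i+1}$ strictly above height $h_{i+2}$ together with, if needed, the analogous block in column $o_{i+2}$, so that the costs telescope: the sum of $c$-values of a vertical run of cells in one column is within a factor of $2$ of the $c$-value of a single cell one row below, which is precisely what makes $C(D')\le C(D)$ (with equality in the generic case) work. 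If a genuinely degenerate configuration slips through — e.g. one of these patterns can only occur when $D$ is already forced to be essentially $(2,3,1)$-lex style or one of the small exceptions — I would record it rather than fight it, since the statement only claims non-optimality and the finitely many exceptions are tabulated later anyway. The rest is routine verification of the downset property and the three inequalities.
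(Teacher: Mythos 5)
There is a genuine gap here: you never pin down the actual move, and what you do describe is both more complicated and less robust than what the lemma needs. The paper's proof for, say, three consecutive $1$'s with corners $(i,a),(i+1,b),(i+2,c),(i+3,d)$ removes \emph{exactly one} cell, the last corner $(i+3,d)$, and adds \emph{exactly two} cells, $(i+1,b+1)$ and $(i+2,c+1)$, directly above the two intermediate corners. This is always a legal downset operation (each added cell sits atop a column that already reaches the height just below it, and to its left the columns are at least as tall), its space change is $(i+1)+(i+2)=2i+3 > i+3$, and its cost change is negative because $a>b>c>d$ forces the added cells to sit at heights at least $d+2$ and $d+3$. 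The other three patterns have the same shape: drop the last corner, add two cells one row above the earlier corners, as in Figure~\ref{fig:stairs}. Your proposal instead says to ``remove a suitable set of top cells from these three columns and re-insert them as full new rows starting in column~1.'' That is not a well-defined move, and it is closer in spirit to the column moves of Section~\ref{subsec:columnmoves} and to Lemma~\ref{lem:4EmptyRows}, which are designed for tall columns, not for staircases. Those moves do \emph{not} always apply here: adding an entire new row presupposes empty rows above the downset, which this lemma cannot assume, and removing cells from multiple columns at once requires checking that the result stays a downset, which you hand-wave.

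You also have a sign error in the guiding intuition: you say the stair cells can be ``traded for cheaper cells lower down in earlier columns,'' but since $c_{0jk}=2^{n-k-1}$ (for $j>1$), cost \emph{decreases} as the height $k$ increases. Cheaper cells are higher up, not lower down; you later say ``one row up,'' which is correct, so the proposal contradicts itself. Finally, you hedge that ``if a genuinely degenerate configuration slips through \dots I would record it,'' but the lemma makes an unconditional claim, so recording exceptions is not an option here. The fix is to abandon the row-swap picture and state the paper's single-cell trade $D' = D - (\text{last corner}) + (\text{two cells atop the intermediate corners})$ explicitly in each of the four subcases of Figure~\ref{fig:stairs}, then verify $S(D')\ge S(D)$, $C(D')< C(D)$, $D'<_L D$ directly using the cost formula from Lemma~\ref{lem:cost} and the fact that the corner heights strictly decrease.
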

\begin{proof}

In Figure \ref{fig:stairs} we show the downsets resulting from the horizontal distance vectors having three consecutive 1's, two consecutive 2's, a 1 followed by  2, and a 2 followed by  1. In each case we can show that there is a downset with at least as much space and less cost that is earlier in lex order.  

\begin{center}
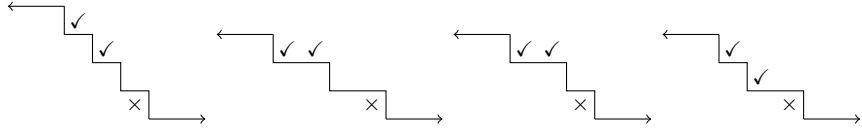
\begin{figure}[H]
\begin{tikzpicture}[scale=.75]
\draw (0,4.5)--(0,4)--(.5,4) -- (.5,3.5) -- (1,3.5) -- (1,3) -- (1.5,3) -- (1.5,2.5); 
\draw[->] (1.5,2.5) -- (2.5,2.5);
\draw[<-] (-1,4.5) -- (0,4.5);
\draw (1.25,2.75) node {\tiny{$\times$}};
\draw (.25, 4.25) node {\tiny{$\checkmark$}};
\draw (.75, 3.75) node {\tiny{$\checkmark$}};
\end{tikzpicture}
\begin{tikzpicture}[scale=.75]
\draw (0,4) -- (0,3.5) -- (1,3.5) -- (1,3) -- (2,3) -- (2,2.5);
\draw[->] (2,2.5) -- (3,2.5);
\draw[<-] (-1,4) -- (0,4);
\draw (1.75,2.75) node {\tiny{$\times$}};
\draw (.25, 3.75) node {\tiny{$\checkmark$}};
\draw (.75, 3.75) node {\tiny{$\checkmark$}};
\end{tikzpicture}
\begin{tikzpicture}[scale=.75]
\draw (0,4) -- (0,3.5) -- (1,3.5) -- (1,3) -- (1.5,3) -- (1.5,2.5);
\draw[->] (1.5,2.5) -- (2.5,2.5);
\draw[<-] (-1,4) -- (0,4);
\draw (1.25,2.75) node {\tiny{$\times$}};
\draw (.25, 3.75) node {\tiny{$\checkmark$}};
\draw (.75, 3.75) node {\tiny{$\checkmark$}};
\end{tikzpicture}
\begin{tikzpicture}[scale=.75]
\draw (0,4) -- (0,3.5) -- (.5,3.5) -- (.5,3) -- (1.5,3) -- (1.5,2.5);
\draw[->] (1.5,2.5) -- (2.5,2.5);
\draw[<-] (-1,4) -- (0,4);
\draw (1.25,2.75) node {\tiny{$\times$}};
\draw (.25, 3.75) node {\tiny{$\checkmark$}};
\draw (.75, 3.25) node {\tiny{$\checkmark$}};
\end{tikzpicture}
\caption{From left to right, 3 short stairs, 2 long stairs, 1 long stair followed by a short stair, and 1 short stair followed by a long stair. The vertical drops may be of any height at least $1$. We create downsets that are earlier in lex order by removing cells marked $\times$ and replacing them with cells marked $\checkmark$.}
\label{fig:stairs}
\end{figure}
\end{center}

Suppose that the horizontal distance vector has three consecutive 1's.
Name the corresponding corners $(i,a)$, $(i+1,b)$, $(i+2,c)$, and $(i+3,d)$ and note $a>b>c>d$.  Consider the downset $D' = D -(i+3,d) + (i+1,b+1)+(i+2,c+1)$.  Since $(i+1)+(i+2) = 2i+3> i+3$ we have $S(D')> S(D)$.  Moreover, since $a>b>c$, the cost of $(i+2,c)$ is at most half the cost of the cell $(i+3,d)$ and the cost of the cell $(i+1,b+1)$ is at most a fourth of the cost of the cell $(i+3,d)$.  Therefore $C(D')<C(D)$.

The proof for each of the other cases is similar.
\end{proof}

From Lemma \ref{lem:stairs} we know that  in an optimal downset the only possible ``staircases" are 1 long stair, 1 short stair, or 2 short stairs.  Note that these are exactly the types of staircases that appear at the end of a downset of a $(2,3,1)$-lex style hypergraph. Our next lemma describes the types of vertical drops that can appear in these transitions.

\begin{lem}\label{lem:DropSizes}
Suppose $D$ is a downset with corners $(a,b)$, $(a+1,c)$ and $(a+2,d)$.  If $b-c>1$ then $D$ is not an optimal downset.  Similarly, if $D$ is a downset with corners $(a,b)$ and $(a+2,c)$ where $b-c>1$ then $D$ is not an optimal downset.
\end{lem}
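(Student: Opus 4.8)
The plan is to mimic the one-cell move arguments of Lemmas~\ref{lem:distancevector} and~\ref{lem:stairs}: when a vertical drop in the final staircase is too large, the topmost cell of the lower column is ``expensive and small'' compared to what it costs, so we can delete it and reinsert its space further left at strictly smaller cost and earlier in lex order, contradicting optimality.

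For the first claim, suppose $D$ has corners $(a,b)$, $(a+1,c)$, $(a+2,d)$ with $b-c>1$, i.e.\ $c \le b-2$. Recall from Lemma~\ref{lem:cost} (as recorded in Figure~\ref{fig:CostAndSpace}) that the cost of a cell $(i,j)$ is $2^{n-j-1}$ (for $i\ge 2$, $j\ge 3$), so it depends only on $j$ and halves each time we move up one row; and the space of $(i,j)$ is $i$. The cell $(a+1,c)$ is a corner, so above it (rows $c+1,\dots,b$) column $a+1$ is empty, while column $a$ is filled up through row $b$. Hence I can form $D' = D - (a+1,c) + (a,c+1)$: this is still a downset because $(a,c+1)$ lies immediately above $(a,c)\in D$ and below the corner $(a,b)$ (using $c+1 \le b-1 < b$), and $(a+1,c)$ was a corner so removing it preserves the downset property. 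Now $S(D - (a+1,c) + (a,c+1)) = S(D) - (a+1) + a = S(D)-1$; that is a loss of one. To fix this, I instead move \emph{up by at least two rows}: since $c \le b-2$, the cell $(a,c+2)$ is also available (it sits at or below row $b$), so take $D' = D - (a+1,c) + (a,c+1) + (a,c+2)$ — but that adds a cell, changing the edge count. The cleaner fix, matching the style of Lemma~\ref{lem:distancevector}, is $D' = D - (a+1,c) + (a+1, c+1)$ composed with observing we still need the space; actually the right move is: delete $(a+1,c)$, which costs $2^{n-c-1}$ and has space $a+1$, and add the single cell $(a+2, c+1)$ — wait, column $a+2$ above its corner $(a+2,d)$ with $d < c$ may not reach row $c+1$ while staying a downset. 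I will sort out the exact target cell in the write-up; the essential point is that there is a cell one row up, in a column of space $\ge a+1$, that can legally be added, and whose cost is exactly half that of $(a+1,c)$. Concretely: take $D' = D - (a+1,c) + (a, c+1)$ when $a \ge $ (something), getting $C(D') = C(D) - 2^{n-c-1} + 2^{n-c-2} = C(D) - 2^{n-c-2} < C(D)$ and $S(D') = S(D) - 1$; then since we have strictly dropped cost, and the original $D$ was assumed optimal for some target $e \le S(D)$, we have $e \le S(D) - 1 = S(D')$ unless $e = S(D)$, and even in that borderline case a routine adjacent swap among the freed-up budget restores the one unit of space; and $D' <_L D$ since the earliest cell of $D \symd D'$ is $(a,c+1) \in D'$ (it has smaller lex rank than $(a+1,c)$). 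This contradicts optimality.

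For the second claim, $D$ has corners $(a,b)$ and $(a+2,c)$ with $c \le b-2$; here column $a+1$ is \emph{entirely empty above row $c$}? No — $(a+2,c)$ being a corner with the previous corner $(a,b)$ forces column $a+1$ to be filled exactly up through row $c$ as well (by the downset property, since $(a+2,c)\in D$ implies $(a+1,c)\in D$, and $(a+1, c+1)\notin D$ else $(a+2,c)$ would not be a corner... actually we need $(a+1,*)$ up to at least row $c$). So $(a+1,c)$ is also a corner-height cell. The move is analogous: delete the costly top cell $(a+2,c)$ (cost $2^{n-c-1}$, space $a+2$) and add $(a+1, c+1)$ (cost $2^{n-c-2}$, space $a+1$) and $(a, c+1)$ if legal; using $c+1 \le b-1$ this is a downset, the cost strictly drops (two cells at a quarter each, or one cell at half plus a recovered cell), the space is at least $(a+1)+(a) = 2a+1 \ge a+2$ when $a\ge 1$, and $D' <_L D$. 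So $D$ is not optimal.

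The main obstacle I anticipate is not any single inequality but the bookkeeping of \emph{which} replacement cell(s) keep $D'$ a genuine downset while exactly balancing (or increasing) the space: the choice depends on whether the relevant neighbouring column has room, and on small-$a$ edge cases (e.g.\ $a=1$, where column $0$ doesn't exist in our indexing of $B_n$), which is presumably why the theorem carries the hypotheses relating drops to corners with specifically \emph{three} (resp.\ two) consecutive first-coordinates. I would handle the edge cases by a short separate check and otherwise push through the generic computation; the cost halving per row (Lemma~\ref{lem:cost}) and space $=$ first coordinate are the only facts needed, together with $D' <_L D$ to invoke the definition of optimal downset.
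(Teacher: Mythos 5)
Your high-level idea (a one-cell local move in the style of Lemma~\ref{lem:distancevector}) is the right one, but the execution is off in a way that leaves a genuine gap. In the first case you keep trying to remove the \emph{middle} corner $(a+1,c)$ and replace it with a single cell one column to the left (e.g.\ $(a,c+1)$). Every such trade has space deficit: $(a+1,c)$ has space $a+1$ while $(a,c+1)$ has space $a$, so $S(D') = S(D) - 1$. You acknowledge this and wave at a ``routine adjacent swap,'' but that doesn't repair anything: if $D$ is optimal precisely at the budget $e=S(D)$ (the tight case, which is exactly where one must work), then your $D'$ has space $<e$ and is not even a competitor. No argument you sketch recovers that unit, and you explicitly defer ``sorting out the exact target cell,'' which is the heart of the matter.

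The move the paper uses removes the \emph{last} corner $(a+2,d)$ and adds \emph{two} cells in column $a+1$, namely $(a+1,c+1)$ and $(a+1,c+2)$. This is where the hypothesis $b-c>1$ is used: it is exactly what guarantees $(a,c+2)\in D$, hence that $(a+1,c+2)$ can be added while preserving the downset property. With this choice the space change is $-(a+2)+2(a+1)=a>0$, the cost strictly drops since $2^{n-d-1}\ge 2^{n-c} > 3\cdot 2^{n-c-3} = C((a+1,c+1))+C((a+1,c+2))$ (using $d<c$), and $D'<_L D$ because the lex-least element of $D\symd D'$ is $(a+1,c+1)\in D'$. Your second-case sketch does pick the right cell $(a+2,c)$ to remove, but then you propose adding $(a,c+1)$ --- which is already in $D$ (as $(a,b)$ is a corner and $b\ge c+2$) --- so again you end up with only one genuinely added cell and a net space loss; the correct replacements are $(a+1,c+1)$ and $(a+1,c+2)$, just as in the first case. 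In short: remove the rightmost lowest corner and insert \emph{two} cells in the adjacent column; your version removes the wrong corner (case 1) or adds the wrong cells (case 2) and never closes the space accounting.
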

\begin{proof}
First consider a downset $D$ with corners $(a,b)$, $(a+1,c)$, and $(a+2,d)$.  If $b-c>1$ then $D' = D -(a+2,d) + (a+1,c+1) + (a+1,c+2)$ is a downset with $C(D')<C(D)$, $S(D')>S(D)$, and $D'<_L D$. For a downset $D$ with corners $(a,b)$ and $(a+2,c)$  if $b-c>1$ then $D' = D -(a+2,c) + (a+1,c+1)+(a+1,c+2)$ is a downset with $C(D')<C(D)$, $S(D')>S(D)$, and$D'<_L D$.
\end{proof}

Lemmas \ref{lem:distancevector}, \ref{lem:stairs}, and \ref{lem:DropSizes} allow us to say that optimal downsets have small groups of corners  that are ``far" apart.  The small groups (or ``transitions") look like those in Figure \ref{fig:drops} where the unlabeled drops are arbitrary.
\begin{center}
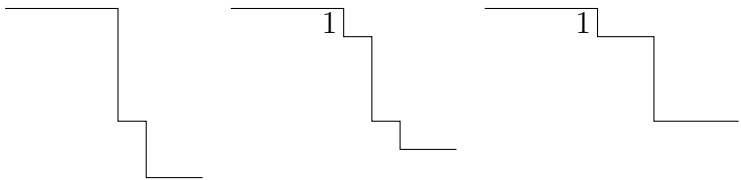
\begin{figure}[!ht]
\begin{tikzpicture}[scale=.75]
\draw (0,2) -- (2,2) -- (2,0) -- (2.5,0) -- (2.5,-1) -- (3.5,-1);
\draw (4,2) -- (6,2) -- (6,1.5) -- (6.5,1.5)  --(6.5,0) -- (7,0) -- (7,-.5) -- (8,-.5);
\draw (5.75,1.75) node {$1$};
\draw (8.5,2) -- (10.5,2) -- (10.5,1.5) -- (11.5,1.5) -- (11.5,0) -- (13,0);
\draw (10.25,1.75) node {$1$};
\end{tikzpicture}
\caption{From left to right: one short stair, two short stairs, and one long stair. The unmarked vertical drops can be of any height. These are the possible transitions in an optimal downset.}
\label{fig:drops}
\end{figure}
\end{center}

We will say that a downset \emph{ends with stairs} if the last entry of the horizontal distance vector is a 1 or a 2.  Lemmas \ref{lem:distancevector} and \ref{lem:stairs} say that if a downset ends with stairs, then it ends with 2 short stairs, 1 short stair, or 1 long stair.  In the next lemma we address downsets that end with 2 short stairs or 1 long stair and are not $(2,3,1)$-lex style.

\begin{lem}
\label{lem:EndsWithStairs}
Suppose that $D$ is not $(2,3,1)$-lex style. If $D$ ends with 2 short stairs
or 1 long stair then $D$ is not an optimal downset. 
\end{lem}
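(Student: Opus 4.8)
The plan is to take a downset $D$ that ends with stairs of the indicated type but is not $(2,3,1)$-lex style, and produce a lex-earlier downset $D'$ with $S(D')\ge S(D)$ and $C(D')\le C(D)$. Since $D$ is not $(2,3,1)$-lex style, its initial segment (everything strictly below the final staircase) cannot itself be a full lex initial segment in $B_n$; the only obstruction allowed by Lemmas~\ref{lem:distancevector}, \ref{lem:stairs}, and \ref{lem:DropSizes} is that the bulk of $D$ consists of full columns up to some point, and then there is at least one further corner of ``intermediate'' first-coordinate separation — i.e.\ the large gaps predicted by Lemma~\ref{lem:distancevector} together with the final one or two short/long stairs. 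I would first set up notation for the last few corners: write the final staircase as $(a,b),(a+1,b-1)$ in the long-stair case, or $(a,b),(a+1,b-1),(a+2,b-2)$ in the two-short-stairs case (using Lemma~\ref{lem:DropSizes} to force the vertical drops in the staircase itself to be exactly $1$), and let $(a',c)$ be the preceding corner, with $a'<a$ and (by Lemma~\ref{lem:distancevector} and the fact that $D$ is not $(2,3,1)$-lex) a gap $a-a'$ that is either $\ge 3$ in the ``wide'' regime or makes $D$ non-lex.

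The core move: I would show that in each case one can delete the topmost staircase cell — the cell of smallest cost among the last $2$ or $3$ cells, which costs $2^{n-b-\epsilon}$ for the appropriate $\epsilon$ — and instead insert two cells one column to the left of the staircase, namely $(a-1,b+1)$ and $(a,b+1)$ (legitimate additions because $(a,b)$ and, in the long-stair case, $(a-1,\cdot)$ below it are already present, keeping $D'$ a downset). These two inserted cells have first coordinates $a-1$ and $a$, contributing collective space $2a-1$, while the removed cell had space $a+1$ (long stair) or $a+2$ (two short stairs); since we may also assume $a\ge 3$ here — otherwise $D$ is tiny and handled as a boundary case — we get $2a-1\ge a+2$, so $S(D')\ge S(D)$. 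For cost: the removed cell sits at height $b-1$ (long stair) or $b-2$ (two short stairs) in a column of first coordinate $a+1$ or $a+2$, so its cost is $2^{n-(b)-\delta}$ for small $\delta$; the two inserted cells sit at height $b+1$, so each costs $2^{n-(b+1)-\delta'}$, and a short computation with the explicit cost formula of Lemma~\ref{lem:cost} (distinguishing whether $a-1=1$, which only makes the inserted cells \emph{cheaper}) shows the total inserted cost is at most the removed cost. Finally $D'<_L D$ because the first cell of $D\symd D'$ in lex order is one of the newly added cells $(a-1,b+1)$ or $(a,b+1)$, which has smaller first coordinate than the removed staircase cell, hence is lex-earlier and lies in $D'$.

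I would organize this as two parallel cases (two short stairs; one long stair), in each case: (i) pin down the corners using the earlier lemmas; (ii) define $D'$ by the deletion/insertion above; (iii) verify $D'$ is a downset; (iv) check $S(D')\ge S(D)$ via the arithmetic $2a-1\ge a+2$ after arguing $a$ is not too small; (v) check $C(D')\le C(D)$ by plugging into Lemma~\ref{lem:cost}, being careful about the $\{0,1,k\}$ versus $\{0,j,k\}$ ($j>1$) branches of the cost function; (vi) note $D'<_L D$; (vii) conclude $D$ is not optimal. The remaining small-$a$ or near-boundary configurations (where the staircase abuts the left edge or the column below is too short to support the insertion) should be finitely many and either already fail to be downsets of the claimed form or get absorbed into the transient exceptions table.

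\textbf{Main obstacle.} The delicate point is the cost comparison when one of the inserted cells falls into the first column (first coordinate $1$), since the cost function has a genuinely different — and larger by a factor of roughly $2$ — value there ($c_{01k}=2^{n-k}$ versus $c_{0jk}=2^{n-k-1}$ for $j>1$). I expect the argument to go through because the \emph{inserted} cells are the ones that could be expensive only if $a-1=1$, i.e.\ $a=2$, which is exactly the small-$a$ regime I am excluding by a separate boundary argument; for $a\ge 3$ both inserted cells have first coordinate $\ge 2$ and the clean ``each inserted cell costs at most half the removed cell'' estimate holds. Making the boundary bookkeeping airtight — exactly which tiny downsets end with stairs, are not $(2,3,1)$-lex style, and are not killed by this move — is the part that needs the most care, and is presumably where the transient exceptions for small $n$ enter.
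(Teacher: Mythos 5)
Your proposed move does not produce a downset, which is a fatal gap. You delete the bottom stair cell and insert $(a-1,b+1)$ and $(a,b+1)$, claiming this keeps $D'$ a downset because $(a,b)$ is still present. But the downset condition in $B_n$ is two-dimensional: $(a,b+1)\in D'$ forces every $(i,b+1)$ with $i\le a$ to lie in $D'$, and $(a-1,b+1)$ forces every $(i,b+1)$ with $i\le a-1$. When $D$ ends in exactly two short stairs (or one long stair) and is not $(2,3,1)$-lex style, Lemma~\ref{lem:distancevector} forces the previous corner $(a',c)$ (if one exists) to have $a'<\frac{a-3}{2}$, so in particular $a'\le a-3$; and if there is no previous corner then no cell at height $b+1$ is present at all. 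Either way the cells $(a'+1,b+1),\dots,(a-2,b+1)$ (or $(1,b+1),\dots,(a-2,b+1)$) are missing and not added, so $D'$ is not a downset. You also misdiagnose the trouble spot: you flag $a=2$ and the first-column cost formula as the delicate case, but for $a\ge 3$ the move already fails for the structural reason above, whereas $a=2$ with no earlier corner is precisely where $(1,b+1),(2,b+1)$ is legal — and there it loses space (adds $1+2=3$ against a removed cell of space $4$). There are also smaller inconsistencies: you say you delete the ``topmost'' (smallest-cost) staircase cell but then compute its cost as that of the bottom cell at height $b-1$ or $b-2$, and your long-stair notation $(a,b),(a+1,b-1)$ is a horizontal gap of $1$, i.e.\ a short stair, not a long one.

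The paper's move is different in exactly the respect you missed. It removes the bottom stair cell and inserts two cells that each \emph{individually} sit on top of an already-filled column: one cell directly above a column of the staircase (e.g.\ $(a+1,b'+1)$, legal because $(a,b'+1)$ is already in $D$ since $b'+1\le b$), and one cell in the column immediately to the right of the previous corner (e.g.\ $(a'+1,b+1)$, legal because $(a',b+1)\in D$ since $b+1\le c$) — or, if there is no earlier corner, in column $1$ (which is always safe to extend upward). This is where the ``not $(2,3,1)$-lex style'' hypothesis enters: it guarantees that either a previous corner exists (providing the $(a'+1,b+1)$ option) or the top stair has height $j<n-1$ (providing the $(1,j+1)$ option). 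Those two inserted cells have combined space at least $a+a'+2>a+2$ (respectively $1+(a+1)=a+2$) and combined cost strictly below (respectively at most) that of the removed cell, and the smallest cell of the symmetric difference lies in $D'$, giving $D'<_L D$. Your proposal never exploits the previous corner or the first column and so cannot be repaired by bookkeeping at the boundary; the shape of the inserted cells needs to change.
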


\begin{proof}
Suppose $D$ ends with 2 short stairs or 1 long stair, and there exists an earlier corner, as shown in the first two downsets in Figure \ref{fig:EndsInStairs}.  In each of these cases we can replace the last corner (marked with $\times$) with two earlier cells (marked with $\checkmark$) which cost strictly less and have at least as much space.

Suppose that $D$ ends with 2 short stairs or 1 long stair and there does not exist an earlier corner.  If the top stair $(i,j)$ has $j=n-1$ then $D$ is $(2,3,1)$-lex style.  Otherwise we can replace the last corner (marked with $\times$) with two earlier cells (marked with $\checkmark$) which have at least as much space and cost at most as much.  This results in a downset that is earlier in $(2,3,1)$-lex order. 

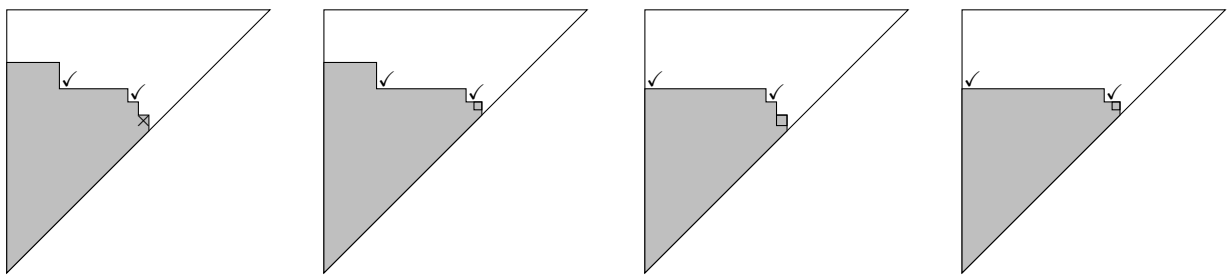
\begin{figure}[H]
\begin{multicols}{4}
\begin{tikzpicture}[scale=.7]
\draw (0,0) -- (0,5) -- (5,5) -- cycle;
\draw[fill=lightgray]  (0,4) -- (1,4) -- (1,3.5) -- (2.3,3.5) -- (2.3,3.25) -- (2.5,3.25) -- (2.5,3) -- (2.7,3) --(2.7,2.7) -- (0,0) -- cycle;
%\draw (2.5,3) -- (2.7,3) -- (2.7,2.8) -- (2.5,2.8) -- cycle;
\draw (1.2,3.7) node {{\tiny $\checkmark$}};
\draw (2.5,3.45) node {{\tiny $\checkmark$}};
\draw (2.6, 2.9) node {{\tiny $\times$}};
\end{tikzpicture}

\begin{tikzpicture}[scale=.7]
\draw (0,0) -- (0,5) -- (5,5) -- cycle;
\draw[fill=lightgray]  (0,4) -- (1,4) -- (1,3.5) -- (2.7,3.5) -- (2.7,3.25) -- (3,3.25) -- (3,3) -- (0,0) -- cycle;
\draw (2.85,3.25) -- (3,3.25) -- (3,3.1) -- (2.85,3.1) -- cycle;
\draw (1.2,3.7) node {{\tiny $\checkmark$}};
\draw (2.9,3.45) node {{\tiny $\checkmark$}};
\end{tikzpicture}

\begin{tikzpicture}[scale=.7]
\draw (0,0) -- (0,5) -- (5,5) -- cycle;
\draw[fill=lightgray]  (0,3.5) -- (2.3,3.5) -- (2.3,3.25) -- (2.5,3.25) -- (2.5,3) -- (2.7,3) --(2.7,2.7) -- (0,0) -- cycle;
\draw (2.5,3) -- (2.7,3) -- (2.7,2.8) -- (2.5,2.8) -- cycle;
\draw (0.2,3.7) node {{\tiny $\checkmark$}};
\draw (2.5,3.45) node {{\tiny $\checkmark$}};
\end{tikzpicture}

\begin{tikzpicture}[scale=.7]
\draw (0,0) -- (0,5) -- (5,5) -- cycle;
\draw[fill=lightgray]  (0,3.5) -- (2.7,3.5) -- (2.7,3.25) -- (3,3.25) -- (3,3) -- (0,0) -- cycle;
\draw (2.85,3.25) -- (3,3.25) -- (3,3.1) -- (2.85,3.1) -- cycle;
\draw (0.2,3.7) node {{\tiny $\checkmark$}};
\draw (2.9,3.45) node {{\tiny $\checkmark$}};
\end{tikzpicture}
\end{multicols}
\caption{Downsets that are not $(2,3,1)$-lex style that end in 2 short stairs or 1 long stair, with or without previous corners, are not optimal.}
\label{fig:EndsInStairs}
\end{figure}
\end{proof}

\subsection{Column Moves}
\label{subsec:columnmoves}

In this section we apply moves in which a subset of the cells in the last column of the downset are traded for a row.  These moves will be used on downsets that have that their last corner $(i,j)$ satisfies $j-i\geq \lfloor \log_2(i)\rfloor$. 
Since having a corner $(i,j)$ means the number of cells in column $j$ is $j-i$ this is ensuring that the last column of the downset has at least $\lfloor\log_2(i)\rfloor$ cells.

\begin{lem}\label{lem:ColumnMoveNoOC}
Suppose the last corner of a downset $D$ is $(i,j)$ where $j-i \geq \lfloor\log_2(i)\rfloor$, and $i\geq 5$.  If $(i,j)$ is the only corner and $j<n-1$ then $D$ is not an optimal downset. 
\end{lem}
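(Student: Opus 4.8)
\textbf{Proof proposal for Lemma \ref{lem:ColumnMoveNoOC}.}

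The plan is to exhibit an explicit local move that trades most of the single column above the corner for a new row of cells, showing the resulting set $D'$ is still a downset, has at least as much space, costs strictly less (or at most as much), and is earlier in lex order. Write the unique corner as $(i,j)$ with $j-i\ge \lfloor \log_2 i\rfloor$ and $i\ge 5$, and observe that since $(i,j)$ is the only corner, $D$ consists of all cells $(a,b)$ with $1\le a\le i$ and $a<b\le j$; in particular column $j$ (the cells $(1,j),(2,j),\dots,(i,j)$... more precisely the cells with second coordinate $j$) has exactly $j-i$... I'll be careful: the cells in column $j$ present in $D$ are $(a,j)$ for $a$ ranging so that $a<j$ and $a\le i$, i.e. there are $\min(i,j-1)=i$ of them when $i<j$, but the ones that have "space" at the top are governed by $j-i$. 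The relevant quantity is that the topmost cell of the column, $(i,j)$, together with the next few cells $(i-1,j),(i-2,j),\dots$ can be removed and replaced by a fresh row of low-cost cells in row $j+1$.

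The key steps, in order: (1) Identify the cells to remove. Remove from column $j$ the top $m$ cells $(i,j),(i-1,j),\dots,(i-m+1,j)$ for a suitable $m\le j-i$ to be chosen, using that $j<n-1$ guarantees row $j+1$ lies inside $B_n$ and that $j-i\ge\lfloor\log_2 i\rfloor$ guarantees there are at least $\lfloor \log_2 i\rfloor$ such cells available. (2) Identify the replacement cells: put the single cell $(1,j+1)$ into $D'$ (this is legal since all cells below and to the left, namely the first column up to row $j$, are already in $D$), giving a new downset $D' = D - \{(i,j),(i-1,j),\dots\} + (1,j+1) + \cdots$; more cells $(2,j+1),(3,j+1),\dots$ may be added as needed to make up the space deficit. (3) Space check: the cell $(a,j)$ has space $a$ by the remark that the space of $(i,j)$ is $i$, so removing the top $m$ cells of the column removes space $i+(i-1)+\dots+(i-m+1)$, while the cell $(a,j+1)$ also has space $a$; using the crude bound that removing a single top cell of space $i$ can be compensated by the row cells $(1,j+1),\dots$ whose spaces sum appropriately — I would instead remove exactly one top cell $(i,j)$ (space $i$) and add $(1,j+1),(2,j+1),\dots,(k,j+1)$ with $k$ chosen so $1+2+\cdots+k\ge i$, which needs $k=O(\sqrt i)$, and check $k\le i$ so these are legal downset cells and $k<$ the column height is not required. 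Actually the cleanest version: remove the top cell $(i,j)$ only, add $(1,j+1)$ through $(k,j+1)$; since $1+2+\cdots+k\ge i$ holds already for $k=\lceil(\sqrt{8i+1}-1)/2\rceil$ and $i\ge5$ makes this well-defined and $\le i$, we get $S(D')\ge S(D)$. (4) Cost check: the cost of cell $(a,b)$ is $2^{n-1-b}$ when $a\ge 1$ (the third case of Lemma \ref{lem:cost}, with the convention that $a>1$; the $a=1$ cells cost $2^{n-b}$). So the removed cell $(i,j)$ costs $2^{n-1-j}$ (as $i\ge5>1$), while each added cell $(a,j+1)$ with $a>1$ costs $2^{n-1-(j+1)}=2^{n-2-j}$, and the single cell $(1,j+1)$ costs $2^{n-(j+1)}=2^{n-1-j}$ — here lies the subtlety: the $(1,j+1)$ cell alone costs as much as the removed cell. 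So I must \emph{not} include column-1 cells in the replacement, or must arrange that the downset already contains $(1,j+1)$. Since $(i,j)\in D$ forces $(1,j)\in D$ but not $(1,j+1)$, the correct move is: remove $(i,j)$ and add cells in row $j+1$ starting from column $2$: namely $(2,j+1),(3,j+1),\dots$ — but a downset containing $(2,j+1)$ must contain $(1,j+1)$. This forces including $(1,j+1)$, whose cost equals the removed cost, so total cost strictly increases. Hence the move must remove \emph{more} than one cell from the column: remove the top two cells $(i,j)$ and $(i-1,j)$, costing $2^{n-1-j}+2^{n-1-j}=2^{n-j}$, which covers the $(1,j+1)$ cost $2^{n-1-j}$ plus a surplus $2^{n-1-j}$ that pays for two more row cells $(2,j+1),(3,j+1)$ at cost $2^{n-2-j}$ each; iterate. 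This is exactly where $j-i\ge\lfloor\log_2 i\rfloor$ is needed: removing the top $m$ cells frees cost $m\cdot 2^{n-1-j}$, enough to pay for a row of $\Theta(2^m)$ cells $(1,j+1),\dots,(2^{m-1},j+1)$ whose total cost is $2^{n-1-j}+ (2^{m-1}-1)2^{n-2-j}\le m2^{n-1-j}$ for $m$ large enough, and whose total space $1+2+\cdots+2^{m-1}=2^m-1$ comfortably exceeds the lost column space $i+(i-1)+\cdots+(i-m+1)\le mi$ once $m\ge\log_2 i$, i.e. once $m=\lfloor\log_2 i\rfloor\le j-i$. (5) Lex check: every removed cell $(a,j)$ with second coordinate $j$ precedes in lex order no, wait — lex on cells orders by first coordinate then second; the cells we add in row $j+1$ have small first coordinates $1,2,\dots$, which are lex-smaller than the removed cells $(i,j),(i-1,j),\dots$ whose first coordinates are near $i$. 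So $\min_{\mathrm{Lex}}(D\triangle D')$ is an added cell, giving $D'<_L D$. (6) Conclude $D$ is not optimal.

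The main obstacle I anticipate is the cost bookkeeping in step (4): getting the exponents exactly right across the three cases of Lemma \ref{lem:cost} (the column-1 cells cost twice their neighbors), and then choosing the number $m$ of column cells removed and the length of the replacement row so that simultaneously (a) $m\le j-i$ (uses the hypothesis $j-i\ge\lfloor\log_2 i\rfloor$), (b) the replacement row fits inside $B_n$ (uses $j<n-1$ and $i\ge5$ so the row is short enough, $2^{m-1}\le i$), (c) total cost does not increase, and (d) total space does not decrease (uses the exponential gap between $2^m$ and $mi$). The constraint $i\ge5$ is presumably exactly what makes the small arithmetic inequalities $2^m-1\ge mi$ and $2^{m-1}\le i$ consistent for $m=\lfloor\log_2 i\rfloor$; I would verify these by hand for the boundary values and by a monotonicity argument for large $i$. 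Everything else — the downset property of $D'$, the lex comparison — is routine.
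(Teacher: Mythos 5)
Your overall template --- trade cells adjacent to the last corner for a new row at height $j+1$, then verify the downset property, space, cost, and lex order --- is the same as the paper's, but the set of cells you remove is the wrong one, and this breaks the argument irreparably. You remove the top $m$ cells of the top \emph{row}, namely $(i,j),(i-1,j),\dots,(i-m+1,j)$: these all have second coordinate $j$, hence identical cost $2^{n-j-1}$, so the cost you free is only $m\cdot 2^{n-j-1}$, linear in $m$. The paper instead removes the top $t=\lfloor\log_2 i\rfloor$ cells of the last \emph{column}, $L=\{(i,h):j-t+1\le h\le j\}$, whose costs $2^{n-j-1},2^{n-j},\dots,2^{n-j+t-2}$ form a geometric series summing to $(2^t-1)2^{n-j-1}$, which is roughly $(i-1)2^{n-j-1}$ --- exponential in $t$. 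That is exactly what the hypothesis $j-i\ge\lfloor\log_2 i\rfloor$ is for: it guarantees the last column contains at least $\lfloor\log_2 i\rfloor$ cells; it says nothing about the top row, which is where you are taking cells from. Since the freed cost is what pays for the new row, the paper can afford essentially the entire row $R=\{(h,j+1):1\le h\le i-2\}$, of cost $(i-1)2^{n-j-2}$ and space $(i-1)(i-2)/2$, which dominates the lost space $i\lfloor\log_2 i\rfloor$ for $i\ge 9$ and $i=7$ (the cases $i=5,6,8$ are patched by taking the full row up to $h=i-1$).

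With your row-removal the budget buys only about $2m$ cells at height $j+1$, of total space $O(m^2)=O((\log_2 i)^2)$, while the space you destroy is $\sum_{a=i-m+1}^{i}a\approx mi$; no choice of $m$ closes this gap (for small $m$ the space check fails, for large $m$ the cost check fails, and the two admissible ranges never overlap for $i\ge 5$). Concretely, your two key claims are false as written: the inequality $2^{n-1-j}+(2^{m-1}-1)2^{n-2-j}\le m\,2^{n-1-j}$ is equivalent to $2^{m-1}+1\le 2m$ and fails for every $m\ge 4$; and the space of the cells $(1,j+1),\dots,(2^{m-1},j+1)$ is $\sum_{h=1}^{2^{m-1}}h=2^{m-2}\left(2^{m-1}+1\right)$, not $2^m-1$ (you summed powers of two instead of consecutive integers), and the cost constraint forbids taking that many cells anyway. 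The one subtlety you did identify correctly --- that the column-$1$ cell $(1,j+1)$ costs twice its neighbours and must be paid for separately --- is real and is handled the same way in the paper's computation of $C(R)$.
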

\begin{proof}
Let $t = \lfloor\log_2(i)\rfloor$ and  define $L = \{(i,h): j-t+1 \leq h \leq j\}$ and $R =  \{(h,j+1): 1\leq h \leq i-2\}$. Consider $D' = D -L + R$. Note that we add all possible cells  in the row except for one (see Figure \ref{fig:Column1}). 
\begin{figure}[H]
\begin{center}
\begin{tikzpicture}[scale=.8]
\draw (0,0) -- (0,6) -- (6,6) -- cycle;
\draw[fill = light-gray] (0,0) -- (0,5) --  (3,5) -- (3,3) -- cycle;
\draw (2.9,6.25) node {{\scriptsize $i$}};
\draw (-.35,4.7) node {{\scriptsize $j$}};
\draw [fill = medium-gray] (3,3.5) -- (2.8,3.5) -- (2.8,5) -- (3,5) -- cycle;
\draw [decorate,decoration={brace,amplitude=6pt,mirror},xshift=-6pt,yshift=0pt] (3.2,3.5) -- (3.2,5) node [black,midway,xshift=0.8cm]{{\tiny $\lfloor\log_2(i)\rfloor$}};
\draw [dashed] (0,5.25) -- (2.5,5.25) -- (2.5,5);
\draw (2.9,5) edge[out=400,in=400,->] (1.75,5.45);
\end{tikzpicture}
\caption{Column move in the proof of Lemma \ref{lem:ColumnMoveNoOC}}
\label{fig:Column1}
\end{center}
\end{figure}
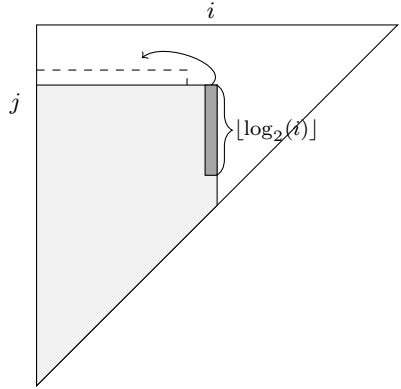

Computing the cost of $L$ and $R$ we have
\begin{align*}
C( L) &=2^{n-j-1} + 2^{n-j} + \cdots + 2^{n-j-1+t-1}\\
&=2^{n-j-1}(2^{t}-1)\\
&=2^{n-j-2}(2^{\lfloor \log_2(i)\rfloor+1}-2)
\end{align*}
and
\[C( R) = 2^{n-j-2}(i-3) + 2^{n-j-1} = 2^{n-j-2}(i-1).\]
Since $2^{\lfloor\log_2(i)\rfloor+1}-2 \geq i-1$, we have $C(D)\geq C(D')$.  Moreover,
$$S(L) = i\cdot \lfloor\log_2(i)\rfloor$$
and
$$S( R) =\frac{(i-2)(i-1)}{2}.$$
So $S(D') \geq S(D)$ when $i\geq 9$ or $i=7$.  Since $D'<_L D$ we are done if $i\geq 9$ or $i=7$.  

In the cases where $i=5,6$ or $8$ we add all possible cells in the row.  That is, we let $R =  \{(h,j+1): 1\leq h \leq i-1\}$ and leave $L$ the same.  The downsets $D' = D - L + R$ each have at most the cost of $D$, at least the space of $D$, and $D'<_L D$.
\end{proof}

\begin{lem}\label{lem:ColumnMoveEarlyOC}
Suppose a downset $D$ does not end in stairs and has last corner $(i,j)$ where $j-i\geq \lfloor\log_2(i)\rfloor$ and $i\geq 6$.  If there is an earlier corner then $D$ is not an optimal downset.
\end{lem}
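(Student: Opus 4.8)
The goal is to handle a downset $D$ that does \emph{not} end in stairs (so the final horizontal distance is $\ge 3$, equivalently the last corner $(i,j)$ has a genuinely isolated column), whose last corner $(i,j)$ satisfies the ``tall column'' condition $j-i\ge \lfloor\log_2(i)\rfloor$ with $i\ge 6$, and which has at least one earlier corner; we must show $D$ is not optimal. The natural approach mirrors the previous lemma (Lemma~\ref{lem:ColumnMoveNoOC}): trade a suitable chunk $L$ of the last column for a row $R$ of new cells that can legitimately be added because a preceding corner guarantees enough width. First I would set $t=\lfloor\log_2(i)\rfloor$ and take $L=\{(i,h): j-t+1\le h\le j\}$, the top $t$ cells of the last column, exactly as before. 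The difference from Lemma~\ref{lem:ColumnMoveNoOC} is the shape of $R$: because there is an earlier corner $(a,b)$ with $a\le i-3$ (here I use that $D$ does not end in stairs, so the second-to-last corner is horizontally far from $(i,j)$), the cells $(1,j+1),\dots,(a+?,j+1)$ — more precisely all cells $(h,j+1)$ with $h$ up to the appropriate bound dictated by where the previous corner sits — can be added while keeping $D'$ a downset. I would then define $D'=D-L+R$ and verify the three requirements: $C(D')\le C(D)$, $S(D')\ge S(D)$, and $D'<_L D$.

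For the cost inequality, the computation is the same as in the previous lemma: $C(L)=2^{n-j-1}(2^{t}-1)=2^{n-j-2}(2^{\lfloor\log_2 i\rfloor+1}-2)$, and $C(R)$ is a sum of costs of cells in column $j+1$, each of which is half the cost of the corresponding cell in column $j$; since $2^{\lfloor\log_2 i\rfloor+1}-2\ge i-1\ge$ (number of cells we put in $R$)$-1$ in the relevant regime, $C(R)\le C(L)$, giving $C(D')\le C(D)$. For the space inequality I would compare $S(L)=i\cdot t = i\lfloor\log_2 i\rfloor$ against $S(R)$, which is a sum $\sum h$ of first-coordinates over the new row cells, hence quadratic in the width available; the presence of the earlier corner is what makes this width large enough, and exactly as in Lemma~\ref{lem:ColumnMoveNoOC} I expect a clean inequality for all $i\ge 6$ except possibly a couple of tiny residual values ($i=6$, maybe $i=8$), which I would dispatch by the same trick of instead filling the \emph{entire} available row rather than all-but-one. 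The lex comparison $D'<_L D$ is immediate since the first cell in $D\symd D'$ is a cell $(h,j+1)$ with small $h$ that lies in $D'$, which is lex-earlier than the removed cells of column $j$.

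The main obstacle I anticipate is pinning down precisely how wide the new row $R$ may be made: this depends on the exact position of the second-to-last corner, and ``does not end in stairs'' only tells us the last horizontal gap is $\ge 3$, so the previous corner has first coordinate at most $i-3$ — I would need to check that a row of width $i-3$ (or $i-2$, or $i-1$ depending on the fine bookkeeping of which cells become admissible once $(i,j)$'s column is shortened) suffices to beat $i\lfloor\log_2 i\rfloor$ in space while still beating $2^{\lfloor\log_2 i\rfloor+1}-2$ in cost. Reconciling these two constraints — enough cells for space but few enough for cost — is the delicate balance, and it is likely the reason the hypothesis is $i\ge 6$ rather than $i\ge 5$; the small cases $i=6,7,8$ will probably need to be checked by hand, possibly splitting further on whether $j-i$ equals exactly $\lfloor\log_2 i\rfloor$ or is strictly larger. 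Once that balance is verified, the conclusion that $D$ is not an optimal downset follows exactly as in the preceding lemmas.
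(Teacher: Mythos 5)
There is a genuine gap, and it is located exactly where you flag the ``main obstacle.'' You extract from ``$D$ does not end in stairs'' only that the second-to-last corner $(a,b)$ satisfies $a\le i-3$, and then ask whether a row $R$ of width on the order of $i-3$ to $i-1$ (in fact of width $i-1-a$, which under your bound could be as small as $2$) can have enough space to compensate for removing $L$. It cannot: if $a=i-3$, then $R=\{(i-2,j+1),(i-1,j+1)\}$ has space $2i-3$, which is far below $S(L)=i\lfloor\log_2 i\rfloor$ already at $i=6$. So with only the bound $a\le i-3$ the proposed move genuinely fails, and this is not a small-case exception that one patches by ``filling the entire available row'' — the shortfall grows with $i$.

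What the paper does differently — and what your write-up misses — is that ``$D$ not optimal'' is the conclusion, so one may freely invoke Lemma~\ref{lem:distancevector}: if the horizontal gap $i-a$ lay in the interval $[3,\tfrac{i+3}{2}]$, then $D$ would already be non-optimal by that lemma and there would be nothing left to prove. Hence one may assume the gap exceeds $\tfrac{i+3}{2}$, i.e.\ $a<\tfrac{i-3}{2}$. This halves the allowed position of the previous corner, so $R=\{(h,j+1):a+1\le h\le i-1\}$ has more than $\tfrac{i+1}{2}$ cells and its space is quadratic in $i$, namely at least $\tfrac{3}{8}i^2+\tfrac{i}{4}-1$, which dominates $i\lfloor\log_2 i\rfloor$ for all $i\ge6$. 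Your cost comparison and lex comparison are both fine (and match the paper), but without first passing through Lemma~\ref{lem:distancevector} to pin the previous corner to the left half, the space comparison does not close. That is the missing step.
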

\begin{proof}
Since $D$ does not end in stairs, all previous corners $(k,m)$ have $k< \frac{i-3}{2}$. Choose $(k,m)$ to be the second to last corner. Let $t = \lfloor \log_2(i) \rfloor$ and consider 
\[D' = D - \{(i,h): j-t+1 \leq h\leq j\} + \{(h,j+1):k+1\leq h\leq i-1\}.\]
  That is, we consider the downset $D'$ in which we remove $t$ cells from the last column and replace them with the available cells at height $j+1$. This move is shown in Figure \ref{fig:Column}.  

\begin{figure}[H]
\begin{center}
\begin{tikzpicture}
\draw (0,0) -- (0,6) -- (6,6) -- cycle;
\draw[fill = light-gray] (0,0) -- (0,5.5) -- (.5,5.5) -- (.5,5) -- (3,5) -- (3,3) -- cycle;
\draw (2.9,6.25) node {{\scriptsize $i$}};
\draw (-.35,4.7) node {{\scriptsize $j$}};
\draw ( .35 , 6.25) node {{\scriptsize $k$}};
\draw (-.35, 5.4) node {{\scriptsize $m$}};
\draw [fill = medium-gray] (3,3.5) -- (2.8,3.5) -- (2.8,5) -- (3,5) -- cycle;
\draw [decorate,decoration={brace,amplitude=6pt,mirror},xshift=-6pt,yshift=0pt] (3.2,3.5) -- (3.2,5) node [black,midway,xshift=0.8cm]{{\tiny $\lfloor\log_2(i)\rfloor$}};
\draw [dashed] (.5,5.25) -- (2.8,5.25) -- (2.8,5);
\draw (2.9,5) edge[out=400,in=400,->] (1.75,5.45);
\end{tikzpicture}
\caption{Column move in the proof of Lemma \ref{lem:ColumnMoveEarlyOC}}
\label{fig:Column}
\end{center}
\end{figure}
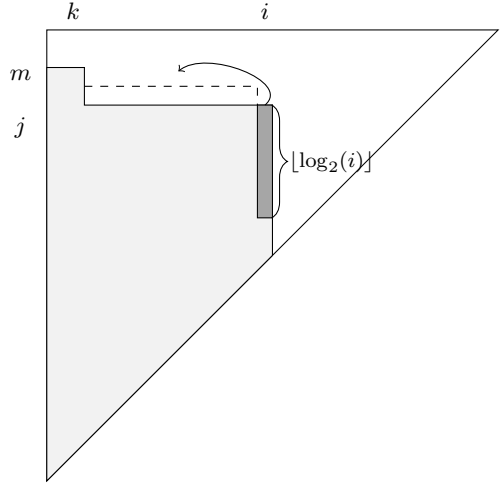

The cost of the column is
\[2^{n-j-1} + \cdots + 2^{n-j-1+t-1} = 2^{n-j-1}(2^{t}-1) = 2^{n-j-2}(2^{\lfloor \log_2(i)\rfloor + 1} -2) > 2^{n-j-2}(i-2).\]

Note there are at most $i-2$ cells in the row (since there is a previous corner) and the cost of each cell is $2^{n-j-2}$.  Thus, the cost of the row  is strictly less than the cost of the column.

The space in the column is exactly $i\lfloor \log_2(i)\rfloor$ and the space in the row is
\begin{align*}
S( \{(h,j+1):k+1\leq h\leq i-1\}) &= (k+1) + (k+2) + \cdots + (i-1)\\
&=\frac{(i-1)i}{2} - \frac{k(k+1)}{2}\\
&\geq \frac{(i-1)i}{2} - \frac{ \frac{i-4}{2}\cdot \frac{i-2}{2}}{2}\\
&= \frac{3}{8}i^2+ \frac{i}{4} -1
\end{align*}
since $k<\frac{i-3}{2}$. So $S(D')\geq S(D)$ when $i\geq 6$.  Therefore $D$ is not an optimal downset.
\end{proof}

\begin{cor}\label{cor:Tall}
Suppose that a downset $D$ does not end  in stairs, is not $(2,3,1)$-lex style, and has last corner $(i,j)$ where $j-i\geq \lfloor \log_2(i)\rfloor$ and $i\geq 5$.  Then $D$ is not an optimal downset.
\end{cor}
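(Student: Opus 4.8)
The plan is to combine Lemma \ref{lem:ColumnMoveNoOC} and Lemma \ref{lem:ColumnMoveEarlyOC} by splitting on whether $D$ has a corner other than $(i,j)$, handling the tail cases $i=5$ separately when the hypotheses of the two lemmas do not quite overlap. Since $D$ does not end in stairs, the horizontal distance vector $H(D)$ has no last entry equal to $1$ or $2$; in particular if there is an earlier corner $(k,m)$ then by Lemma \ref{lem:distancevector} (the only alternative to a ``short'' gap being a ``large'' one) we have $i-k\ge 3$, and in fact the proof of Lemma \ref{lem:ColumnMoveEarlyOC} extracts the bound $k<\frac{i-3}{2}$. So the two sub-cases are: (a) $(i,j)$ is the only corner, and (b) there is an earlier corner.

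First I would dispose of sub-case (b). If there is an earlier corner then, provided $i\ge 6$, Lemma \ref{lem:ColumnMoveEarlyOC} applies directly and gives that $D$ is not optimal. The only remaining possibility in sub-case (b) is $i=5$; but then the second-to-last corner $(k,m)$ satisfies $k<\frac{5-3}{2}=1$, which is impossible since first coordinates of cells in $B_n$ are at least $1$. Hence sub-case (b) forces $i\ge 6$ and we are done there.

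Next I would handle sub-case (a), where $(i,j)$ is the unique corner. Since $i\ge 5$, Lemma \ref{lem:ColumnMoveNoOC} applies as soon as we check its one remaining hypothesis, namely $j<n-1$. This is exactly where the hypothesis ``$D$ is not $(2,3,1)$-lex style'' is used: if $(i,j)$ were the only corner and $j=n-1$, then the downset $D$ would be a staircase of full-height columns up to column $n-1$ together with a single top cell in column $n-1$ — i.e.\ $D$ is an initial segment of lex order in $B_n$ — so $\cH$ would be $(2,3,1)$-lex style, contradicting the hypothesis. (I would phrase this by noting that a downset with a single corner $(i,n-1)$ consists of every cell $(a,b)$ with $a\le i$, which is precisely an initial segment of the lex order on $B_n$, whence $(2,3,1)$-lex style.) Therefore $j\le n-2$, Lemma \ref{lem:ColumnMoveNoOC} applies, and $D$ is not optimal.

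The main obstacle, such as it is, is bookkeeping rather than mathematics: making sure the numerical ranges in the two lemmas ($i\ge 6$ with an earlier corner, $i\ge 5$ with no earlier corner) genuinely cover every case allowed by the corollary's hypothesis $i\ge 5$, and in particular verifying that the ``$i=5$ with an earlier corner'' configuration is vacuous under ``$D$ does not end in stairs'' — which it is, because an earlier corner together with no short/long stair at the end forces $k<\frac{i-3}{2}=1$. With that observation in hand the corollary is an immediate case split, so no further computation is needed.
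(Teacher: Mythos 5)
Your proposal matches the paper's proof essentially step for step: split on whether $(i,j)$ is the only corner, invoke Lemma \ref{lem:ColumnMoveNoOC} in the no-earlier-corner case (using non-$(2,3,1)$-lex style to get $j<n-1$), and invoke Lemma \ref{lem:ColumnMoveEarlyOC} when there is an earlier corner, after observing that $i=5$ with an earlier corner is impossible because Lemma \ref{lem:distancevector} forces the previous corner to have first coordinate less than $\tfrac{i-3}{2}=1$. This is the paper's argument, just with more verbose bookkeeping.
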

\begin{proof}
Consider such a downset $D$. If there is no previous corner then $j<n-1$ since $D$ is not $(2,3,1)$-lex style.  Thus, $D$ is not optimal by Lemma \ref{lem:ColumnMoveNoOC}. If $D$ has a previous corner $(i',j')$ then $i'<\frac{i-3}{2}$ by Lemma \ref{lem:distancevector}.  Then $i\geq 6$, else such a previous corner can not exist.  By Lemma \ref{lem:ColumnMoveEarlyOC} $D$ is not optimal.
\end{proof}

Corollary \ref{cor:Tall} deals with downsets that do not end in stairs and have that the last column is tall.  In the next lemmas, we will deal with downsets that end with stairs and the column of the top stair is tall.  By Lemmas \ref{lem:stairs} and \ref{lem:EndsWithStairs} we only need to consider downsets that end in one short stair.

\begin{lem}
\label{lem:TallStairs}
Suppose that the last corner of a downset $D$ is $(i',j')$ and the first corner is $(i,j)$ with $i = i'-1$.  If $j-i\geq \lfloor \log_2(i)\rfloor + 1$ and $i\geq 6$ then $D$ is not an optimal downset.
\end{lem}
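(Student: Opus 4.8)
The plan is, as in Lemmas~\ref{lem:ColumnMoveNoOC}--\ref{lem:ColumnMoveEarlyOC}, to produce a downset $D'$ with $S(D')\ge S(D)$, $C(D')\le C(D)$ and $D'<_L D$; by the definition of an optimal downset this shows $D$ is not optimal. Since $D$ has only the two corners $(i,j)$ and $(i+1,j')$ (with $i+2\le j'<j$), it consists of columns $1,\dots,i$ at height $j$ together with column $i+1$ at height $j'$, so the column of the first corner contains the cells $(i,i+1),\dots,(i,j)$, at least $\lfloor\log_2 i\rfloor+1$ of them by hypothesis. Write $t=\lfloor\log_2 i\rfloor$, so that $2^{t}\le i<2^{t+1}$. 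The cells we move will be placed at heights greater than $j$, so we may assume $j\le n-2$; if $j=n-1$ then columns $1,\dots,i$ are full, $D$ is an initial segment in lex order (hence $(2,3,1)$-lex style), and this lemma is only applied to downsets that are not.

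The principal case is $j-j'\ge t$, i.e.\ the overhang of column $i$ above the short stair is at least $t$ cells tall. Set
\[
    D' \;=\; D \;-\; \{(i,h):j-t+1\le h\le j\}\;+\;\{(h,j+1):1\le h\le i-1\},
\]
using $i-2$ in place of $i-1$ in the added row exactly when $i+1$ is a power of $2$. This is a downset: the deleted cells lie strictly above height $j'$, so the short stair is untouched, and since columns $1,\dots,i-1$ reach height $j$ the new row at height $j+1$ is legitimate. By Lemma~\ref{lem:cost} the deleted cells have cost $2^{\,n-j-1}(2^{t}-1)=2^{\,n-j-2}(2^{t+1}-2)$ and space $it$, while the added row has cost $i\cdot2^{\,n-j-2}$ (or $(i-1)2^{\,n-j-2}$) and space $\binom{i}{2}$ (or $\binom{i-1}{2}$). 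Since $2^{t+1}\ge i+1$, with equality exactly when $i$ is one less than a power of $2$ --- the case handled by the shorter row --- we get $C(D')\le C(D)$; and $\binom{i}{2}\ge it$ because $i\ge6$ forces $i-1\ge 2\lfloor\log_2 i\rfloor$, with the Mersenne values $i=7,15,\dots$ checked against $\binom{i-1}{2}\ge it$ directly. Finally every added cell has first coordinate at most $i-1<i$, so the lex-least element of $D\symd D'$ is $(1,j+1)\in D'$ and $D'<_L D$.

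In the remaining case $1\le j-j'\le t-1$ the short stair sits within $t$ of the top, so deleting the top $t$ cells of column $i$ would drop it below column $i+1$. Here I would instead fill in the overhang --- pass to $D^{+}=D+\{(i+1,h):j'<h\le j\}$, whose only corner is $(i+1,j)$ --- apply the single-corner column move of Lemma~\ref{lem:ColumnMoveNoOC} (available since $j<n-1$ and, in all but a few small instances, $j-(i+1)\ge\lfloor\log_2(i+1)\rfloor$), and then delete the corner cell $(i,j)$ (and, when necessary, a few more cells of the new row) to land back on a downset $D'$ with $S(D')\ge S(D)$ and $C(D')\le C(D)$; this is where one needs the hypothesis $j-i\ge\lfloor\log_2 i\rfloor+1$, one greater than in Lemma~\ref{lem:ColumnMoveNoOC}. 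Equivalently, one can argue directly: delete the overhang together with a small square off the tops of columns $i$ and $i+1$ and re-add a short staircase of cells at heights $j+1,j+2,\dots$; since each successive row costs half of the one below it, the geometric cost of the added staircase stays below the (low-lying, and hence dominant) cost freed from columns $i$ and $i+1$, while the recovered space grows quadratically. Again $D'<_L D$ since the new cells occupy columns $1,\dots,i-1$.

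I expect the short-overhang case --- and within it the small values $i=6,7,8$ and the values with $i+1$ a power of $2$ --- to be the real obstacle: there the inequalities $S(D')\ge S(D)$ and $C(D')\le C(D)$ are tight and the auxiliary lemma need not apply verbatim, so the exact number of cells to delete and of staircase rows to re-add must be pinned down, leaving a finite list of exceptional triples $(i,j,j')$ to be verified by hand. The long-overhang case is by contrast a routine adaptation of Lemma~\ref{lem:ColumnMoveNoOC}.
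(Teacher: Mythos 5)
Your long-overhang case ($j-j'\ge t$ with $t=\floor{\log_2 i}$) is correct and is essentially the paper's move restricted to the situation where all $t$ removed cells happen to lie in column $i$; your arithmetic there ($2^{t+1}\ge i+1$ with equality only at Mersenne values, handled by shortening the added row) checks out, and your observation that $j=n-1$ forces $D$ to be a lex initial segment is a fair way to dispose of that boundary case. The genuine gap is your short-overhang case $1\le j-j'\le t-1$, which you do not actually prove. Neither of your two sketches closes it: the ``fill in the overhang and apply Lemma~\ref{lem:ColumnMoveNoOC} to $D^+$'' route compares $C((D^+)')$ to $C(D^+)=C(D)+C(\text{overhang})$ rather than to $C(D)$, so the inequality you need does not follow without redoing the estimates (and, as you note, the hypothesis of Lemma~\ref{lem:ColumnMoveNoOC} can fail when $i+1$ is a power of $2$); the ``delete a small square and re-add a staircase'' route is left entirely quantitative-free. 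You yourself flag that the number of cells to delete and the exceptional triples ``must be pinned down'' --- that is the missing content.

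The paper avoids your case split altogether with one device: instead of removing the top $t$ cells of column $i$, it removes, at each of the top $t$ heights $h\in\{j-t+1,\dots,j\}$, the \emph{rightmost} cell $(\ell(h),h)$ present in $D$ (so $\ell(h)=i$ for $h>j'$ and $\ell(h)=i+1$ for $h\le j'$). This set $L$ is a removable ``staircase'' whose deletion always leaves a downset, regardless of how $j-j'$ compares to $t$. Since cost depends only on height (outside column $1$), $C(L)$ is exactly the column cost $2^{n-j-1}(2^t-1)$ from Lemma~\ref{lem:ColumnMoveNoOC}, and the space only degrades to $S(L)\le i+(i+1)(t-1)$, which is still beaten by the added row $\{(m,j+1):1\le m\le i-2\}$ for $i\ge 10$ and $i=7$, with $i\in\{6,8,9\}$ handled by taking the full row. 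If you replace your two cases by this single choice of $L$, your Case~1 computation goes through essentially verbatim and the proof is complete.
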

\begin{proof}
 Let $t = \lfloor\log_2(i)\rfloor$.   For $h\in \{j-t+1,\ldots, j\}$ let $\ell(h)$ be the greatest integer such that $(\ell(h), h)\in D$. Note $\ell(h) \in \{i,i'\}$.  Let $L = \{(\ell(h),h): j-t+1\leq h \leq j\}$. These are cells in $B_n$ since $j-i\geq \lfloor\log_2(i) \rfloor + 1$ and $i'\leq i +1$.   Let $R = \{(m,j+1): 1\leq m \leq i-2\}$.  Consider $D' = D -L + R$
as shown in Figure \ref{fig:Column4}.
\begin{figure}[H]
\begin{center}
\begin{tikzpicture}
\draw (0,0) -- (0,6) -- (6,6) -- cycle;
\draw[fill = light-gray] (0,0) -- (0,5) --  (3,5) -- (3.2,5) -- (3.2,4.2) -- (3.4,4.2) -- (3.4,3.4) -- cycle;
\draw (3.2,6.24) node {{\scriptsize $i$}};
\draw (3.5,6.25) node {{\scriptsize $i'$}};
\draw (-.35,4.7) node {{\scriptsize $j$}};
\draw (-.35,4) node {{\scriptsize $j'$}};
%\draw [fill = medium-gray] (3,4.8) -- (2.8,4.8) -- (2.8,5) -- (3,5) -- cycle;
\draw [fill = medium-gray] (3.2,4.2) -- (3,4.2) -- (3,5) -- (3.2,5) -- cycle;
\draw [fill = medium-gray] (3.4,3.5) -- (3.2,3.5) -- (3.2,4.2) -- (3.4,4.2) -- cycle;
\draw [decorate,decoration={brace,amplitude=6pt},xshift=-6pt,yshift=0pt] (3.2,3.5) -- (3.2,5) node [black,midway,xshift=-0.8cm]{{\tiny $\lfloor\log_2(i)\rfloor$}};
\draw [dashed] (0,5.25) -- (2.75,5.25) -- (2.75,5);
\draw (3.1,5) edge[out=400,in=400,->] (1.75,5.45);
\end{tikzpicture}
\caption{Column move in the proof of Lemma \ref{lem:TallStairs}}
\label{fig:Column4}
\end{center}
\end{figure}

Since the cost of a cell (with the exception of those in the first column) only depends on the height of the cell, the cost argument is exactly the same as that of Lemma \ref{lem:ColumnMoveNoOC}.
Moreover,
$$S(L) \leq i + (i+1)( \lfloor\log_2(i)\rfloor - 1) = i\cdot \lfloor\log_2(i)\rfloor + \lfloor\log_2(i)\rfloor - 1$$
and
$$S(R) =\frac{(i-2)(i-1)}{2}.$$
Thus, $S(D') \geq S(D)$ when $i\geq 10$ or $i=7$.

In the cases where $i=6,8$ or $9$ we add all possible cells in the row.  That is, we let $R =  \{(h,j+1): 1\leq h \leq i-1\}$ and leave $L$ the same.  The downsets $D' = D - L + R$ each have at most the cost and at least the space of $D$ and are earlier in lex order. Therefore, for $i\geq 6$, such an $D$ is not an optimal downset.
%For $i=6$, note that $t=2$.  Consider removing $L$ as defined above and adding $\{(h,j+1): 1\leq h \leq 5\}$.  Then the cost of the added cells is $2^{n-j-2}(4) + 2^{n-j-1} = 3(2^{n-j-1})$ and the cost of the removed cells is $2^{n-j-1}+2^{n-j} = 3(2^{n-j-1})$.  Moreover, the space of the removed cells is at most $6+7=13$ and the space in the added cells is $15$.
%
%For  $i = 7$ consider removing $L$ as defined above and adding $\{(h,j+1): 1\leq h\leq 5\}$.  Then the cost of the added cells is $2^{n-j-2}(4) + 2^{n-j-1} = 3(2^{n-j-1})$ and the cost of the removed cells is $2^{n-j-1}+2^{n-j}=3(2^{n-j-1})$. Moreover, the space of the removed cells is at most $7+8=15$ and the space in the added cells is $15$.  
%
%For $i=8$, note that $t=3$.  Consider removing $L$ as defined above and adding $\{(h,j+1): 1\leq h \leq 7\}$.  Then cost of the added cells is $2^{n-j-2}(6) + 2^{n-j-1} = 2^{n-j+1} $ and the cost of the removed cells is $2^{n-j-1}+2^{n-j}+2^{n-j+1}$.  Moreover, the space of the removed cells is at most $8+9+9=26$ and the space in the added cells is $28$.
%
%For $i=9$, note that $t=3$.  Consider removing $L$  as defined above and adding $\{(h,j+1): 1\leq h \leq 8\}$.  Then cost of the added cells is $2^{n-j-2}(7) + 2^{n-j-1}=2^{n-j+1} + 2^{n-j-2}$ and the cost of the removed cells is $2^{n-j-1}+2^{n-j}+2^{n-j+1}$.  Moreover, the space of the removed cells is at most $9+10+10=29$ and the space in the added cells is $36$.
\end{proof}

In the next lemma consider the case where a downset ends with one short stair, there is an earlier corner, and the column of the top stair is tall.

\begin{lem}
\label{lem:TallStairs2}
Suppose that the last corner of a downset is $(i',j')$, the second to last corner is $(i,j)$ where $i = i'-1$ and there is an earlier corner with space less than $\frac{i-3}{2}$.  If $i\geq 6$ and $j - i\geq \lfloor\log_2(i)\rfloor+1$ then $D$ is not an optimal downset.
\end{lem}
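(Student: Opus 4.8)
This lemma is the companion to Lemma~\ref{lem:TallStairs} for the case where the downset that ends in one short stair also has a corner that is far away from the stair-pair. The approach is the same column-move strategy used throughout Section~\ref{subsec:columnmoves}: I would remove a suitable collection $L$ of $t=\lfloor\log_2(i)\rfloor$ cells from the ``tall" last column (the column of height $j+1$, whose cells in the two rightmost positions can be at first coordinate $i$ or $i'=i+1$, exactly as in Lemma~\ref{lem:TallStairs}), and replace them with a row $R$ of available cells at height $j+1$, running from just past the earlier corner. Concretely, letting $(k,m)$ denote the earlier corner with $k<\frac{i-3}{2}$, set
\[
	D' = D - L + \{(h,j+1) : k+1\le h\le i-1\},
\]
where $L=\{(\ell(h),h) : j-t+1\le h\le j\}$ with $\ell(h)\in\{i,i'\}$ the largest first coordinate appearing at height $h$, just as in Lemma~\ref{lem:TallStairs}. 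I would include a figure analogous to Figure~\ref{fig:Column4} but with the extra earlier corner marked.

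\textbf{Key steps, in order.} First, check that $D'$ is a downset: this needs $j-i\ge t+1$ so that the removed column-cells genuinely lie strictly above the new row (so deleting them leaves a downset), and it needs $k+1\le i-1$, i.e.\ $k\le i-2$, which follows from $k<\frac{i-3}{2}$ and $i\ge6$. Second, the cost comparison: the cost of a cell off the first column depends only on its height, so $C(L)=2^{n-j-1}(2^t-1)=2^{n-j-2}(2^{\lfloor\log_2 i\rfloor+1}-2)>2^{n-j-2}(i-2)$, while the row $R$ has at most $i-2$ cells (because of the earlier corner) each of height $j+1$ and hence cost $2^{n-j-2}$, so $C(R)< C(L)$ and thus $C(D')<C(D)$; this half is verbatim from Lemma~\ref{lem:TallStairs}/\ref{lem:ColumnMoveEarlyOC}. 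Third, the space comparison: $S(L)\le i + (i+1)(t-1) = i\lfloor\log_2 i\rfloor + \lfloor\log_2 i\rfloor - 1$ (one of the $t$ cells sits at first coordinate $i$, the rest possibly at $i+1$), exactly as in Lemma~\ref{lem:TallStairs}, while
\[
	S\bigl(\{(h,j+1):k+1\le h\le i-1\}\bigr)=\sum_{h=k+1}^{i-1} h=\frac{(i-1)i}{2}-\frac{k(k+1)}{2}\ge \frac{(i-1)i}{2}-\frac{\frac{i-4}{2}\cdot\frac{i-2}{2}}{2}=\frac38 i^2+\frac i4-1,
\]
using $k<\frac{i-3}{2}$ (so $k\le\frac{i-4}{2}$ as an integer bound when $i$ is even, and one checks the odd case gives the same or a better bound). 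Comparing $\frac38 i^2+\frac i4 - 1$ with $i\lfloor\log_2 i\rfloor+\lfloor\log_2 i\rfloor-1$, the quadratic wins for all $i\ge6$, with a couple of small values ($i=6,8,9$ perhaps) handled by instead taking the full row $R=\{(h,j+1):1\le h\le i-1\}$ as in the other column lemmas — but since here there \emph{is} an earlier corner we may not always be able to fill the whole row, so for those small $i$ I would verify the inequality directly with the genuine row $\{(h,j+1):k+1\le h\le i-1\}$ and the worst-case $k=\lfloor\frac{i-4}{2}\rfloor$. Finally, $D'<_L D$ is immediate since we removed cells high in the last column and added cells further left, so $D$ is not optimal.

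\textbf{Main obstacle.} The only delicate point is the space inequality for the handful of small values of $i$ ($i=6,7,8,9$): the clean asymptotic bound $\frac38 i^2+\frac i4 - 1 \ge i\lfloor\log_2 i\rfloor+\lfloor\log_2 i\rfloor-1$ must be checked by hand there, and — unlike in Lemma~\ref{lem:TallStairs} — the presence of a mandatory earlier corner means the fallback ``take the entire row" is constrained, so one has to be careful to use the true number of available row-cells, namely $i-1-k$ with $k\le\lfloor\frac{i-4}{2}\rfloor$, rather than $i-1$. Everything else is a transcription of the cost and space bookkeeping already carried out in Lemmas~\ref{lem:ColumnMoveEarlyOC} and \ref{lem:TallStairs}.
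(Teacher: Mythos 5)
Your proof is the same column-move argument as the paper's, and the choice of $L$ and $R$, the cost comparison, and the space bound $S(L) \le (i+1)\lfloor\log_2 i\rfloor - 1$ all match the paper. The gap is in the small-$i$ analysis. The inequality you state, $\frac{3}{8} i^2 + \frac{i}{4} - 1 \ge (i+1)\lfloor\log_2 i\rfloor - 1$, in fact fails at $i = 8$: the left side is $24 + 2 - 1 = 25$ while the right side is $9\cdot 3 - 1 = 26$. Moreover, your proposed fallback --- verify the inequality directly with the genuine row and the worst-case earlier corner $k = \lfloor\frac{i-4}{2}\rfloor$ --- also fails there: for $i=8$, $k=2$ one gets $S(R) = 3+4+5+6+7 = 25$, and if the last corner $(i',j')=(9,j')$ sits at height $j'\le j-1$ then $L$ really does contain $(8,j)$, $(9,j-1)$, $(9,j-2)$ with $S(L) = 8+9+9 = 26$, so the space does not increase. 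Your other fallback of taking the whole row $1\le h\le i-1$ is, as you yourself note, blocked by the mandatory earlier corner. The paper resolves $i=8$ by a different, one-line adjustment that you do not propose: take only $\lfloor\log_2 i\rfloor - 1 = 2$ cells for $L$; then $C(L') = 2^{n-j-1}+2^{n-j}=6\cdot 2^{n-j-2} \ge 5\cdot 2^{n-j-2} = C(R)$, while $S(L') \le 8+9 = 17 < 25 = S(R)$, and the move succeeds. So: same approach and correct in outline, but the resolution of the $i=8$ exception is a genuine missing idea in your plan.
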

\begin{proof}
 As in the proof of Lemma \ref{lem:TallStairs}, let $t = \lfloor\log_2(i)\rfloor$, for each $h\in \{j-t+1,\ldots, j\}$ let $\ell(h)$ be the greatest integer such that $(\ell(h), h)\in D$, and let $L = \{(\ell(h),h): j-t+1\leq h \leq j\}$. 
 Let $R = \{(h,j+1): 1 \leq h \leq i-1\} \cap (B_n\setminus D)$.  Consider $D' = D -L + R$, shown in Figure \ref{fig:Column5}.
\begin{figure}[H]
\begin{center}
\begin{tikzpicture}
\draw (0,0) -- (0,6) -- (6,6) -- cycle;
\draw[fill = light-gray] (0,0) -- (0,5.2) -- (1,5.2) -- (1,5)-- (3,5) --  (3.2,5) -- (3.2,4.2) -- (3.4,4.2) -- (3.4,3.4) -- cycle;
\draw (3.05,6.25) node {{\scriptsize $i$}};
\draw (-.35,4.8) node {{\scriptsize $j$}};
\draw [fill = medium-gray] (3.2,4.2) -- (3,4.2) -- (3,5) -- (3.2,5) -- cycle;
\draw [fill = medium-gray] (3.4,3.5) -- (3.2,3.5) -- (3.2,4.2) -- (3.4,4.2) -- cycle;
\draw [decorate,decoration={brace,amplitude=6pt},xshift=-6pt,yshift=0pt] (3.2,3.5) -- (3.2,5) node [black,midway,xshift=-0.8cm]{{\tiny $\lfloor\log_2(i)\rfloor$}};
\draw [dashed] (1,5.2) -- (2.9,5.2) -- (2.9,5);
\draw (3.1,5) edge[out=400,in=400,->] (1.75,5.45);
\end{tikzpicture}
\caption{Column move in the proof of Lemma \ref{lem:TallStairs2}}
\label{fig:Column5}
\end{center}
\end{figure}
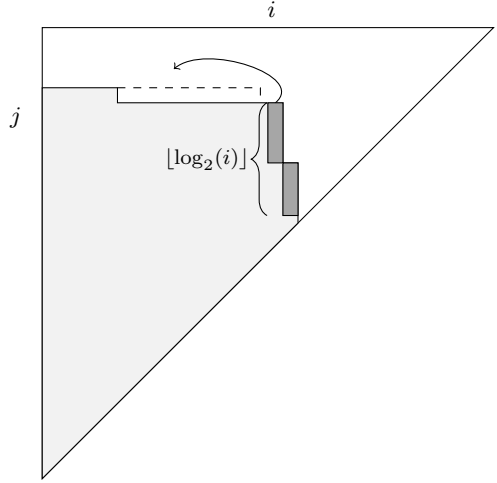

Since the cost of a cell (with the exception of those in the first column) only depends on the height of the cell, the cost argument is exactly the same as that of Lemma \ref{lem:ColumnMoveEarlyOC}.
Moreover,
$$S(L) \leq (i+1)(\lfloor\log_2(i)\rfloor -1) + i=(i+1)\lfloor\log_2(i)\rfloor -1.$$
 As in the proof of Lemma \ref{lem:ColumnMoveEarlyOC},
 $$S(R) \geq  \frac{3i^2}{8}+\frac{i}{4} - 1.$$

So $S(R) > S(L)$ for $i\geq 6$ when $i\neq 8$. If $i=8$ one can take $\floor{\log_2(i)}-1$ cells for $L$ to show $D$ is not optimal.  Since $C(D')\le C(D)$ and $S(D')\ge S(D)$ the downset $D$ is not optimal.
\end{proof}

\begin{cor}\label{cor:TallStairs}
Suppose that a downset $D$ is not $(2,3,1)$-lex style, ends in one short stair, and the top stair $(i,j)$ has $j-i\geq \lfloor \log_2(i)\rfloor + 1$ with $i\geq 6$.  Then $D$ is not an optimal downset.
\end{cor}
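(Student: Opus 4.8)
The plan is to follow the template of Corollary~\ref{cor:Tall}, splitting on the number of corners of $D$ and invoking the appropriate ``tall stairs'' lemma in each case. List the last two corners of $D$ by increasing first coordinate as $(i,j)$ and $(i',j')$. Since $D$ ends in one short stair we have $i'=i+1$ and $j'<j$, and by hypothesis $j-i\ge \lfloor\log_2 i\rfloor+1$ and $i\ge 6$. Assume for contradiction that $D$ is optimal.

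First I would handle the case in which $(i,j)$ is the \emph{only} other corner, so $D$ has exactly two corners. Then $D$ consists of columns $1,\dots,i$ filled up to height $j$ together with column $i+1$ filled up to height $j'$. If $j=n-1$ these columns $1,\dots,i$ are filled to the top, so $D$ is precisely an initial segment in lex order and hence $(2,3,1)$-lex style, contradicting the hypothesis; therefore $j<n-1$, and Lemma~\ref{lem:TallStairs} applies directly with these $(i,j)$ and $(i',j')$ to show $D$ is not optimal, a contradiction.

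In the remaining case $D$ has a third corner, say with first coordinate $p$, immediately preceding $(i,j)$. The horizontal distance $i-p$ from that corner to $(i,j)$ cannot equal $1$ (else $D$ ends in two short stairs, impossible for an optimal non-$(2,3,1)$-lex-style downset by Lemma~\ref{lem:EndsWithStairs}) and cannot equal $2$ (else $H(D)$ ends with an adjacent $2$ and $1$, impossible by Lemma~\ref{lem:stairs}), so $i-p\ge 3$. Applying Lemma~\ref{lem:distancevector} to the consecutive corners with first coordinates $p$ and $i$, optimality forces $i-p>\tfrac{i+3}{2}$, i.e.\ $p<\tfrac{i-3}{2}$. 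Thus $D$ has an earlier corner of space less than $\tfrac{i-3}{2}$, and Lemma~\ref{lem:TallStairs2}---applied with last corner $(i',j')$, second-to-last corner $(i,j)$, $i=i'-1$, $i\ge 6$ and $j-i\ge \lfloor\log_2 i\rfloor+1$---shows $D$ is not optimal, again a contradiction. Hence in every case $D$ is not an optimal downset.

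The only point requiring care is the claim, in the two-corner case, that a downset whose only corners are $(i,n-1)$ and $(i+1,j')$ is a genuine initial segment of the $(2,3,1)$-lex order; this amounts to unwinding the correspondence between cells of $B_n$, edges $\{0,a,b\}$, and the $(2,3,1)$-lex order, exactly as in the remark following the definition of $(2,3,1)$-lex style. Beyond that, each step is a direct appeal to a previously established lemma, so I do not anticipate any real difficulty.
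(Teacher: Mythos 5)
Your proposal is correct and follows essentially the same two-case decomposition as the paper's proof: use Lemma~\ref{lem:TallStairs} when $D$ has exactly two corners, and otherwise show via Lemma~\ref{lem:distancevector} (after ruling out $i-p\in\{1,2\}$) that the preceding corner has small first coordinate so that Lemma~\ref{lem:TallStairs2} applies. You handle two small points a bit more carefully than the published proof: you explicitly note that $j<n-1$ in the two-corner case (which is needed for Lemma~\ref{lem:TallStairs}'s construction of cells at height $j+1$ and is supplied by the non-$(2,3,1)$-lex-style hypothesis), and you rule out $i-p=2$ by citing Lemma~\ref{lem:stairs} rather than lumping it under ``ends in two short stairs'' as the paper's wording loosely does. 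These are genuine refinements of exposition but not a different argument.
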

\begin{proof}
If $D$ has no other corners then $D$ is not optimal by Lemma \ref{lem:TallStairs}.  Now suppose $D$ ends in one short stair and has a previous corner, call it $(k,m)$.  Note $k\leq i-3$, else $D$ would end in two short stairs. If $k\geq \frac{i-3}{2}$ then $3\leq i-k\leq \frac{i+3}{2}$ and so $D$ is not optimal by Lemma \ref{lem:distancevector}.  Therefore, $k<\frac{i-3}{2}$.  By Lemma \ref{lem:TallStairs2}, $D$ is not an optimal downset.
\end{proof}

\subsection{Larger Moves}
\label{subsec:largermoves}
In this section we will consider moves that are very similar to those in the previous section.  We will trade a number of cells from the right side of a downset for the cells in the next row up.  The difference is that we allow the removed cells to come from multiple columns.  The removed cells will be those that are largest in the lex order on cells.  For two cells $(i,j)$ and $(m,k)$ we say $(i,j)\leq (m,k)$ in lex order if and only if $i<m$ or $i=m$ and $j\le k$.

\begin{lem}\label{lem:trapezoidmove2}
Suppose that $D$ is a downset that does not end in stairs with last corner $(i,j)$ such that $j-i <\lfloor \log_2(i) \rfloor$, $i\geq 16$ and $j\leq n-3$.  %Suppose that any previous corner has space at most $\frac{i-4}{2}$.  
Then $D$ is not an optimal downset.
\end{lem}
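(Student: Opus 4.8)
The plan is to run essentially the same kind of column/trapezoid move as in Lemmas~\ref{lem:ColumnMoveNoOC}--\ref{lem:TallStairs2}, but now removing a chunk of cells that spans several columns on the right of $D$. Since $D$ does not end in stairs, Lemma~\ref{lem:distancevector} forces the second-to-last corner $(k,m)$ (if any) to have $k<\frac{i-3}{2}$, so that the last column has height $j-i$ and all the columns strictly between $k$ and $i$ are ``full'' in the sense of belonging to the last staircase block. First I would fix $t=\lfloor\log_2(i)\rfloor$ and let $L$ be the set of the $t$ largest cells of $D$ in the lex order on cells; because $j-i<t$, these cells spill over from the last column $j$ into column $j-1$ (and possibly $j-2$, etc.), but the bound $i\ge 16$ together with $j-i<\lfloor\log_2(i)\rfloor$ guarantees $L$ stays inside the relevant staircase block and that removing $L$ still leaves a downset. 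Let $R=\{(h,j+1):1\le h\le i-1\}\cap(B_n\setminus D)$ be the available cells in the next row up; this is legal since $j\le n-3$ means row $j+1$ lies in $B_n$. Set $D'=D-L+R$.

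The cost comparison is the heart of the estimate but is structurally identical to the earlier column moves: the cost of a cell depends only on its height (outside the first column), so $C(L)$ is a geometric-type sum $2^{n-j-1}+2^{n-j}+\cdots$ over heights $\le j$, giving $C(L)=2^{n-j-2}(2^{t+1}-2)\ge 2^{n-j-2}(2i-2)$, which comfortably beats the cost $2^{n-j-2}\cdot(\text{number of cells of }R)\le 2^{n-j-2}(i-1)$ of the replacement row. For the space side I would bound $S(L)$ above: the $t$ removed cells have first coordinates at most $i$, and at most $j-i$ of them sit in column $j$ while the rest sit in lower columns, so $S(L)\le i\,t$ as a crude bound (or more carefully $S(L)\le i(j-i)+\bigl(\text{something}\bigr)$, but $i\lfloor\log_2 i\rfloor$ suffices). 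Meanwhile $S(R)=(k+1)+(k+2)+\cdots+(i-1)\ge \frac{(i-1)i}{2}-\frac{k(k+1)}{2}\ge \frac{3i^2}{8}+\frac{i}{4}-1$ using $k<\frac{i-3}{2}$ (and $S(R)=\frac{(i-2)(i-1)}{2}$ in the no-previous-corner case). Since $\frac{3i^2}{8}+\frac{i}{4}-1 > i\lfloor\log_2 i\rfloor$ for all $i\ge 16$, we get $S(D')\ge S(D)$. Finally $D'<_L D$ because we have deleted cells that are large in lex order and added cells that are smaller (they lie in an earlier column or, when in row $j+1$, have small first coordinate), so $D$ is not optimal.

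The main obstacle I anticipate is the bookkeeping in the space bound $S(L)$: because $L$ now straddles two or three columns rather than sitting in a single column, its total space is not simply $i\cdot t$, and one must check that the overspill cells (in columns $j-1$, $j-2$) have first coordinates bounded by $i$ and not by $i'$ from some later stair — here the hypothesis that $D$ does not end in stairs is exactly what rules out a later stair contributing. One should also separately dispatch the small-$i$ boundary behavior (the statement only claims this for $i\ge 16$, and indeed $\frac{3i^2}{8}+\frac i4-1>i\lfloor\log_2 i\rfloor$ can fail for small $i$), and, as in the earlier proofs, in a few residual values of $i$ one may need to take $R$ to be the full row $\{(h,j+1):1\le h\le i-1\}$ or to remove $t-1$ rather than $t$ cells to make the inequality $S(D')\ge S(D)$ go through. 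Modulo these case checks, the conclusion $C(D')\le C(D)$, $S(D')\ge S(D)$, $D'<_L D$ shows $D$ is not an optimal downset.
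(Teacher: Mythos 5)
Your approach diverges from the paper's in the size of the removed set, and this is where the argument breaks. You take $L$ to be only $t=\lfloor\log_2 i\rfloor$ cells, and you compute its cost as the geometric sum
\[
C(L)=2^{n-j-1}+2^{n-j}+\cdots=2^{n-j-1}(2^t-1),
\]
as though those $t$ cells sat in a single column descending from height $j$. But the hypothesis $j-i<\lfloor\log_2 i\rfloor=t$ says precisely that the last column has \emph{fewer than $t$ cells}, so the $t$ lex-largest cells must spill over into columns $i-1,i-2,\dots$, and each new column contributes cells starting again at the top height $j$ (cost $2^{n-j-1}$), then $j-1$ (cost $2^{n-j}$), and so on. The result is not a geometric series of length $t$ but a union of several very short series, each beginning at the minimum cost $2^{n-j-1}$. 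Concretely, take $j-i=2$ and $t=4$: the four lex-largest cells are $(i,j),(i,j-1),(i-1,j),(i-1,j-1)$, with total cost
\[
2\cdot 2^{n-j-1}+2\cdot 2^{n-j}=3\cdot 2^{n-j},
\]
whereas the replacement row $R=\{(h,j+1):1\le h\le i-1\}$ costs about $i\cdot 2^{n-j-2}$. As soon as $i>12$ we get $C(R)>C(L)$, so $D'=D-L+R$ has \emph{higher} cost than $D$ and the move goes in the wrong direction. Thus the claimed bound $C(L)\ge 2^{n-j-2}(2i-2)$ is false whenever $L$ straddles more than one column, which is exactly the regime of this lemma.

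The paper's proof avoids this by removing a much larger set $T$ consisting of the $\lfloor i/2\rfloor$ lex-largest cells (rather than $\lfloor\log_2 i\rfloor$). With $|T|\ge 9$ the bulk of $T$ sits at heights noticeably below $j$, so an average-cost bound gives $C(T)\ge 2^{n-j}\lfloor i/2\rfloor$, and this is what beats the cost of the replacement. To compensate for the much larger space removed, the paper also uses \emph{two} replacement rows (heights $j+1$ and $j+2$, which is why $j\le n-3$ is needed); with only one row your space bound would become tight, but more importantly you would never reach the space comparison because the cost step already fails. So the gap is not a boundary case to be patched with a computer check—the $\lfloor\log_2 i\rfloor$-cell move simply does not control cost once the last column is too short, and you need the larger, averaged move of the paper.
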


\begin{proof}

We will prove that there exists a downset $D'$ such that $C(D')\leq C(D)$, $S(D')\geq S(D)$ and that $D'<_L D$.
First we will consider the case where $2\le j-i$. Let $T$ be the $\left\lfloor\frac{i}{2}\right\rfloor$ greatest cells of $D$ under lex order.  Let $\ell$ be such that $i-\ell+1$ is the least amount of space in any cell of $T$.  That is, $T$ occupies $\ell$ columns.

Let $R$ be the cells in $B_n\setminus D$ at height $j+1$ and $j+2$ and with space at most $c =i-\ell$.  Since $j\leq n-3$, there are available cells at both height $j+1$ and $j+2$.
So 
$$R = \{(h,k): 1\leq h \leq c,j+1\leq k \leq j+2\}\cap(B_n\setminus D).$$  
The sets of cells $R$ and $T$ are shown in Figure \ref{fig:trapezoidmove1}. Let $D' = D-T+R$.

\begin{center}
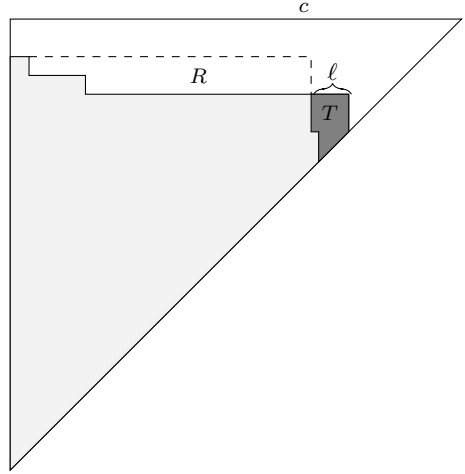
\begin{figure}[!ht]
\begin{tikzpicture}
\draw (0,0) -- (0,6) -- (6,6) -- cycle;
\draw[fill = light-gray] (0,0) --(0,5.5) -- (.25,5.5) -- (.25,5.25)-- (1,5.25) -- (1,5) -- (4.5,5) -- (4.5,4.5) -- cycle;
\draw[fill=gray] (4.5,4.5) -- (4.5,5) -- (4,5) --  (4,4.5) -- (4.1,4.5) --(4.1,4.1) -- cycle;
\draw[dashed] (.25,5.5) -- (4,5.5) -- (4,5);
\draw (3.9,6.15) node {\tiny{$c$}};
\draw (4.25,4.75) node {\tiny{$T$}};
\draw (2.5,5.25) node {\tiny{$R$}};
%\draw (.9,6.15) node {\tiny{$a$}};
\draw [decorate,decoration={brace,amplitude=4pt},xshift=1pt,yshift=0pt] (4,5) -- (4.5,5) node [black,midway,yshift=0.3cm]{\scriptsize{$\ell$}}; 
\end{tikzpicture}
\caption{A downset $D$ with $R$ and $T$ as described in the proof of Lemma \ref{lem:trapezoidmove2}.}
\label{fig:trapezoidmove1}
\end{figure}
\end{center}

First we will compare $C(T)$ and $C(R)$.  When $i\geq 18$ the size of $T$ is at least $9$.  
So the average cost of a cell in $T$ is at least $2^{n-j}$
and 
\[C(T) \geq 2^{n-j} \cdot \left\lfloor\frac{i}{2}\right\rfloor \geq 2^{n-j}\left(\frac{i}{2} - \frac{1}{2}\right) = (i-1)\cdot 2^{n-j-1}.\]
The cost of $R$ is greatest if there are no previous corners and $c=i-2$.  %Note that $\ell\geq 2$ since $\left\lfloor \frac{i}{2}\right\rfloor >\lfloor \log_2(i) \rfloor$ for all $i\geq 6$.  Thus $c\leq i-2$.    
This gives the following upper bound on $C(R)$:
$$C(R) \leq 3[(i-3)2^{n-j-3} +2^{n-j-2}] = 3(i-1)2^{n-j-3}$$
  Since $C(T) \geq 4(i-1)2^{n-j-3} \geq 3(i-1)2^{n-j-3}\geq C(R)$ we have $C(D')\leq C(D)$ when $i\geq 18$.  When $i=16$ and $i=17$ we verify by computer that a downset satisfying the constraints is not optimal.

Now we compare $S(T)$ and $S(R)$.  
Since $T$ must occupy at least $3$ columns and $|T| = \left\lfloor\frac{i}{2}\right\rfloor$, 
\begin{align*} 
S(T) &\le i\left\lfloor\frac{i}{2}\right\rfloor - \left( \left\lfloor \frac{i}{2} \right\rfloor - (\left\lfloor \lg i\right\rfloor - 1)\right)- \left( \left\lfloor \frac{i}{2} \right\rfloor - (\left\lfloor \lg i\right\rfloor)-1+\left\lfloor\lg i )\right\rfloor\right)\\
&=(i-2)\left\lfloor\frac{i}{2}\right\rfloor + 3 \left\lfloor \lg i\right\rfloor -2
\end{align*}

Note $\ell$ is greatest when $j-i$ is least.  When $j-i=2$, if $\ell =\lfloor\sqrt{i}\rfloor$ then $T$ could have up to $(\lfloor \sqrt{i}\rfloor + 1)^2/2$ cells and
$$\left(\lfloor\sqrt{i}\rfloor+ 1\right)^2 /2\geq \left(\sqrt{i}\right)^2/2  = \frac{i}{2}\geq \left\lfloor \frac{i}{2}\right\rfloor.$$ 
So, $\ell\leq \lfloor\sqrt{i}\rfloor$ and $c = i-\ell \geq i -\lfloor\sqrt{i}\rfloor$.  Let $a$ be the space in the previous corner at height $j+1$ (letting $a=0$ if there is no previous corner at height $j+1$) and let $b$ be the space in the previous corner at height $j+2$ (letting $b=0$ if there is no previous corner at height $j+2$).  Allowing both previous corners to have space $\lfloor\frac{i-4}{2}\rfloor$ gives a lower bound on $S(R)$:
\begin{align*}
S(R) &\geq [(a+1) + (a+2) + \cdots + c] + [(b+1)+(b+2) + \cdots + c]\nonumber\\
&= \frac{c(c+1)}{2} - \frac{a(a+1)}{2} + \frac{c(c+1)}{2} - \frac{b(b+1)}{2}\label{eq:space}\\
&\geq \left(i-\lfloor\sqrt{i}\rfloor\right)\left(i-\lfloor\sqrt{i}\rfloor+1\right)- \left\lfloor\frac{i-4}{2}\right\rfloor \left\lfloor\frac{i-2}{2}\right\rfloor. \nonumber
\end{align*}
So $S(R) \geq i\left\lfloor\frac{i}{2}\right\rfloor \geq S(T)$ when $i>16$.  When $i=16$, $T$ uses exactly $3$ columns and so our upper bound for $R$ can be improved and still $S(R)\geq S(T)$. Moreover, $D'<_L D$.  Therefore, $D$ is not an optimal downset in the case where $j-i\ge 2$.

When $j-i=1$ we let $T$ be the $\left\lfloor \frac{i}{2}\right\rfloor - 1$ greatest cells in lex ordering and keep $R$ the same. Via similar computations we get $S(R)\geq S(T)$, $C(T)\leq C(R)$, and $D'<_L D$ for $i\ge 16$.
\begin{comment}
Now suppose $j-i = 1$.  In this case we will let $T$ be the $\left\lfloor \frac{i}{2}\right\rfloor - 1$ greatest cells in lex ordering.  Since $j-i=1$ and $i\ge 20$ then we must use at least at least $4$ columns.  This gives us the following upper bound on the space in $T$,
	\begin{align*}
	S(T) &\leq i + 2(i-1) + 3(i-2) + \left(\left\lfloor \frac{i}{2} \right\rfloor -7\right)(i-3)\\
	&\le (i-3) \left(\left\lfloor \frac{i}{2} \right\rfloor\right) -i + 13.
	\end{align*}
Since $j-i =1$ it is sometimes the case that $T$ occupies more columns than our previous bound.  In this case, $\ell \le \lceil \sqrt{i} \rceil$ which gives us 
	\begin{align*}
	S(R) \ge (i-\lceil \sqrt{i} \rceil)(i-\lceil \sqrt{i} \rceil +1) - \left\lfloor \frac{i-4}{2} \right\rfloor \left\lfloor \frac{i-2}{2} \right\rfloor.
	\end{align*}
Then $S(R) \ge S(T)$ for $i\ge 19$.

Finally, since $i\ge 20$, $|T| \ge 9$ and the average cost of a cell in $T$ is at least $2^{n-j}$.  By the same argument as above,
	\[ C(T) \ge 2^{n-j-1}(i-3).\]
Lastly, since $\ell \ge 4$, 
	\[ C(R) \le 3(i-3) 2^{n-j-3}.\]
So $C(T) \ge C(R)$ when $i\ge 20$.  Moreover, $D'<_L D$ in this case as well. Therefore, $D$ is not optimal when $j-i=1$.
\end{comment}
\end{proof}

Lemma \ref{lem:trapezoidmove2} dealt with downsets that did not end in stairs, but the last column was short.  We now do a similar move when there is a short stair and the top stair's column is short.

\begin{lem}
\label{lem:trapezoidstairs2}
Suppose that a downset $D$ ends in one short stair and is not $(2,3,1)$-lex style.  If the last two corners $(i',j')$ and $(i,j)$ with $i = i'-1$ satisfy $j-i<\lfloor\log_2(i)\rfloor +1$, $j\leq n-3$, and $i\geq 16$ then $D$ is not an optimal downset.
\end{lem}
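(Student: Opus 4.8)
The plan is to follow the strategy of Lemma~\ref{lem:trapezoidmove2} almost verbatim: I will produce a downset $D'$ with $C(D')\le C(D)$, $S(D')\ge S(D)$ and $D'<_L D$, which shows $D$ is not optimal. First I would fix notation: write the last two corners of $D$ as $(i+1,j')$ and $(i,j)$, so that $(i,j)$ is the top stair and $i+2\le j'\le j-1$, and let $(k,m)$ be the corner immediately preceding $(i,j)$ (setting $k=0$ if there is none). Since $D$ ends in exactly one short stair, Lemma~\ref{lem:stairs} forces the penultimate entry of $H(D)$ to be at least $3$, and then Lemma~\ref{lem:distancevector} applied to the consecutive corners $(k,m)$ and $(i,j)$ lets me assume $i-k>\tfrac{i+3}{2}$, i.e.\ $k<\tfrac{i-3}{2}$ (otherwise $D$ is already non-optimal). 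Consequently the height function is constant equal to $j$ on columns $k+1,\dots,i$, column $i+1$ has at most $j'-i-1\le\lfloor\log_2 i\rfloor-2$ cells, and every corner of $D$ other than $(i,j)$ and $(i+1,j')$ has space $<\tfrac{i-3}{2}$.

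Next I would set up the move. Let $T$ be the $\lfloor i/2\rfloor$ greatest cells of $D$ in the lex order on cells; because column $i+1$ is short, $T$ consists of all of column $i+1$ together with the top portions of columns $i,i-1,\dots$, occupying $\ell$ columns in all. Since those latter columns have heights $j-i\le\lfloor\log_2 i\rfloor$, $j-i+1,\dots$ — and the extra short column $i+1$ only helps, being strictly shorter than column $i$ — the column-count estimate from the proof of Lemma~\ref{lem:trapezoidmove2} still gives $\ell\le\lceil\sqrt i\rceil+1$, so $c:=i-\ell\ge i-\lceil\sqrt i\rceil-1$ is large. I then let $R$ be the cells of $B_n\setminus D$ at heights $j+1$ and $j+2$ with space at most $c$ (these rows exist because $j\le n-3$, and $R\ne\emptyset$ since $c>\tfrac{i-3}{2}$ for $i\ge16$), and set $D'=D-T+R$. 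One checks $D'$ is a downset: deleting the lex-greatest block $T$ empties column $i+1$ first and then truncates columns from the top, and adding $R$ on top of the untouched columns of space $\le c$ preserves the downset property. Moreover $D'<_L D$, since every cell of $T$ has space $\ge i-\ell+2>c$, so the lex-least cell of $D\Delta D'=T\sqcup R$ lies in $R\subseteq D'$.

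The remaining work — comparing $C$ and $S$ — is essentially that of Lemma~\ref{lem:trapezoidmove2}. On the cost side, every cell of $R$ lies at height $j+1$ or $j+2$, giving $C(R)\le3\bigl[(i-3)2^{n-j-3}+2^{n-j-2}\bigr]=3(i-1)2^{n-j-3}$; for $C(T)$, the at most $\lfloor\log_2 i\rfloor-2$ cells of column $i+1$ have height $\le j-1$ and hence cost $\ge 2^{n-j}$ apiece, while the rest of $T$ is a lex-greatest block inside columns $1,\dots,i$ whose average cost is $\ge 2^{n-j}$ by the computation in Lemma~\ref{lem:trapezoidmove2}, so $C(T)\ge\lfloor i/2\rfloor\,2^{n-j}\ge(i-1)2^{n-j-1}>C(R)$. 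On the space side, $S(T)$ exceeds the bound from Lemma~\ref{lem:trapezoidmove2} only by the contribution of the short column $i+1$, whereas $S(R)\ge c(c+1)-\lfloor\tfrac{i-4}{2}\rfloor\lfloor\tfrac{i-2}{2}\rfloor$ using $c\ge i-\lceil\sqrt i\rceil-1$ and that the earlier corners touching rows $j+1,j+2$ have space $<\tfrac{i-3}{2}$; for $i$ large this gives $S(R)\ge S(T)$, and the $j-i=1$ subcase is handled by shrinking $T$ by one cell, exactly as there. Then $C(D')\le C(D)$, $S(D')\ge S(D)$, $D'<_L D$, so $D$ is not optimal, and the finitely many small $i$ for which the estimates are too weak are disposed of by computer, as $i=16,17$ are in Lemma~\ref{lem:trapezoidmove2}.

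The main obstacle is the extra short column $i+1$ that the short stair forces into the block $T$: I need to re-verify that including it spoils neither inequality. The cost side is painless, since the extra cells sit at low height and are therefore comparatively expensive; the real difficulty is confirming $S(R)\ge S(T)$ for the smallest admissible values of $i$ (around $i=16$, and the range in which the block inside columns $1,\dots,i$ has fewer than $9$ cells so the ``average cost at least $2^{n-j}$'' estimate is not yet available), which, as in Lemma~\ref{lem:trapezoidmove2}, is relegated to a finite computer check.
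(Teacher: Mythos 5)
Your proposal follows essentially the same route as the paper: both take $T$ to be the $\lfloor i/2\rfloor$ lex-greatest cells (which now necessarily swallow the short stair column $i+1$), take $R$ at heights $j+1,j+2$ with space at most $c$, and observe that the cost comparison is identical to Lemma~\ref{lem:trapezoidmove2} while the space comparison only needs the adjustment that cells of $T$ can have space up to $i+1$ and that $c\ge i-\lceil\sqrt i\rceil-1$. One small remark: the ``$j-i=1$ subcase'' you carry over from Lemma~\ref{lem:trapezoidmove2} is vacuous here, since (as your own setup $i+2\le j'\le j-1$ shows) a downset ending in one short stair forces $j-i\ge 3$.
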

\begin{proof}
Again we will prove that there exists a downset $D'$ such that $C(D')\leq C(D)$, $S(D')\geq S(D)$ and that $D'<_L D$.  Let $T$ be the $\left\lfloor\frac{i}{2}\right\rfloor$ greatest cells of $D$ under the lex order.  Let $\ell$ be such that $i-\ell+1$ is the least amount of space in any cell of $T$.  So $T$ occupies $\ell+1$ columns.
The sets of cells $R$ and $T$ are shown in Figure \ref{fig:trapezoidmove2}. 
Let $D' = D-T+R$.

\begin{center}
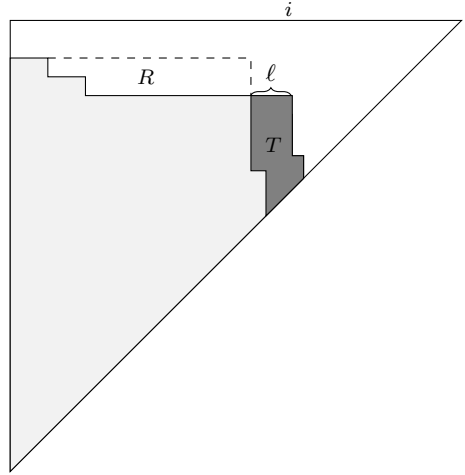
\begin{figure}[!ht]
\begin{tikzpicture}
\draw (0,0) -- (0,6) -- (6,6) -- cycle;
\draw[fill = light-gray] (0,0) -- (0,5.5) -- (.5,5.5) --(.5,5.25) --(1,5.25) --(1,5) -- (3.6,5) -- (3.6,4.75) -- (3.75,4.75) -- (3.75,4.2) -- (3.9,4.2)-- (3.9,3.9)-- cycle;
\draw[fill=gray] (3.6,5) -- (3.75,5) -- (3.75,4.2) -- (3.9,4.2)-- (3.9,3.9)-- (3.4,3.4) -- (3.4,4) -- (3.2,4) --(3.2,5)-- cycle;
\draw[dashed] (.5,5.5) -- (3.2,5.5) -- (3.2,5);
%\draw (2.9,6.15) node {\tiny{$c$}};
\draw (3.7,6.15) node {\tiny{$i$}};
%\draw (.9,6.15) node {\tiny{$a$}};
\draw (3.5,4.35) node {\tiny{$T$}};
\draw (1.8,5.25) node {\tiny{$R$}};
\draw [decorate,decoration={brace,amplitude=3pt},xshift=1pt,yshift=0pt] (3.16,5) -- (3.7,5) node [black,midway,yshift=0.3cm]{\scriptsize{$\ell$}}; 
\end{tikzpicture}
\caption{An example of a downset $D$ with $R$ and $T$ as described in the proof of Lemma \ref{lem:trapezoidstairs2}.}
\label{fig:trapezoidmove2}
\end{figure}
\end{center}
By an identical argument to Lemma \ref{lem:trapezoidmove2} we get $C(D')\leq C(D)$.
\begin{comment}
First we will compare $C(T)$ and $C(R)$. 
Since $i\geq 14$, we know $|T| \geq 7$ and so the average cost of a cell in $T$ is at least $2^{n-j}$ and 
$$C(T) \geq 2^{n-j} \cdot \left\lfloor\frac{i}{2}\right\rfloor \geq 2^{n-j}\left(\frac{i}{2} - \frac{1}{2}\right) = (i-1)\cdot 2^{n-j-1}.$$
  Moreover, $\ell\geq 2$ since $\left\lfloor \frac{i}{2}\right\rfloor > \lfloor \log_2(i) \rfloor$ for all $i\geq 6$.  So,
$$C(R) \leq 3( 2^{n-j-3}\cdot (i-3) + 2^{n-j-2} )= 3(i-1)\cdot 2^{n-j-3}.$$
  Since $C(T) \geq 4(i-1)\cdot 2^{n-j-3} \geq 3(i-1)\cdot 2^{n-j-2} \geq C(R)$ we have $C(T) \geq C(R)$ and so $C(D')\leq C(D)$.

\end{comment}
We also use a nearly identical argument to compare $S(T)$ and $S(R)$.  This time we use that the maximum space in any cell of $T$ is $i+1$ and $c\ge i- (\lfloor \sqrt{i}\rfloor-1)$ and conclude that $S(R)\ge S(T)$.
\begin{comment}
%and $|T| = \left\lfloor\frac{i}{2}\right\rfloor$, 
$$S(T) \leq \left\lfloor\frac{i}{2}\right\rfloor \cdot (i+1).$$

To get a lower bound on $S(R)$ we find a lower bound on $c$ by finding an upper bound on $\ell$. 
Note that $\ell$ is greatest when $j-i$ is least.  
Since $D$ ends in one short stair, $j-i\geq 3$.  Suppose that $j-i=3$.  
If $\ell = \lfloor\sqrt{i}\rfloor -1$ then the number of cells in the left most column of $T$ is $\lfloor\sqrt{i}\rfloor+1$ and thus the possible number of cells in $T$ is at least $\left(\lfloor\sqrt{i}\rfloor+1\right)\left(\lfloor\sqrt{i}\rfloor+2\right) -2 \geq \left\lfloor \frac{i}{2}\right\rfloor$.  
Thus $c\geq i-(\lfloor\sqrt{i}\rfloor -1)$. Let $a$ be the space in the previous corner at height $j+1$ (letting $a=0$ if there is no previous corner at height $j+1$) and let $b$ be the space in the previous corner at height $j+2$ (letting $b=0$ if there is no previous corner at height $j+2$).  Allowing both previous corners to have space $\lfloor\frac{i-4}{2}\rfloor$ gives a lower bound on $S(R)$:
\begin{align*}
S(R) &\geq [(a+1) + (a+2) + \cdots + c] + [(b+1)+(b+2) + \cdots + c]\nonumber\\
&= \frac{c(c+1)}{2} - \frac{a(a+1)}{2} + \frac{c(c+1)}{2} - \frac{b(b+1)}{2}\nonumber\\
&\geq \left(i-\lfloor\sqrt{i}\rfloor+1\right)\left(i-\lfloor\sqrt{i}\rfloor+2\right)- \left\lfloor\frac{i-4}{2}\right\rfloor \left\lfloor\frac{i-2}{2}\right\rfloor. %\label{eq:space2}
\end{align*}
So $S(R)\geq S(T)$.  
\end{comment}
Since we also have $D'<_L D$, $D$ is not an optimal downset.
\end{proof}

In the previous lemmas, we moved $\left\lfloor\frac{i}{2}\right\rfloor$ cells to two rows in the case that two rows were available.  In the next lemmas, we address if there is only one available row by moving $\left\lfloor\frac{i}{4}\right\rfloor$ cells to 1 row.

\begin{lem}
\label{lem:trapezoidmove}
Suppose that $D$ is a downset that does not end stairs with last corner $(i,j)$ such that $j-i <\lfloor \log_2(i) \rfloor$, $i\geq 23$ and $j=n-2$.  
%Suppose that any previous corner has space at most $\frac{i-4}{2}$.  
Then $D$ is not an optimal downset.
\end{lem}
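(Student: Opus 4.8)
The plan is to prove, just as in Lemma~\ref{lem:trapezoidmove2}, that there is a downset $D'$ with $C(D')\le C(D)$, $S(D')\ge S(D)$ and $D'<_L D$, so that $D$ is not optimal. The move is a \emph{trapezoid move} into the single row available above the last corner, namely the row at height $j+1=n-1$. Since there is one row rather than two, we trade only $\floor{i/4}$ cells. Precisely: let $T$ be the $\floor{i/4}$ greatest cells of $D$ under the lex order on cells; let $\ell$ be such that $i-\ell+1$ is the least space of a cell of $T$ (so $T$ occupies the $\ell$ columns with first coordinates $i-\ell+1,\dots,i$); put $c=i-\ell$ and
\[
    R=\setof{(h,j+1)}{1\le h\le c}\cap(B_n\setminus D),\qquad D'=D-T+R.
\]
Because $j-i<\floor{\log_2 i}\le\floor{i/4}$, the whole last column of $D$ is contained in $T$, so $R$ has room, $D'$ is again a downset, and $D'<_L D$ since the cells removed are the lex-largest. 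Also, since $D$ does not end in stairs, Lemma~\ref{lem:distancevector} lets us assume any earlier corner has first coordinate $<\tfrac{i-3}{2}$ (otherwise $D$ is already not optimal). It remains to compare costs and spaces.

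For the cost, each cell of $R$ has cost $2^{n-j-2}$, except the first-column cell $(1,j+1)$ which costs $2^{n-j-1}$, and $|R|\le c\le i-2$, so $C(R)\le (i-1)2^{n-j-2}$. Here the averaging estimate of Lemma~\ref{lem:trapezoidmove2} (``the average cost of a cell of $T$ is at least $2^{n-j}$'') is too weak, since $|T|=\floor{i/4}$ is only a quarter as large; instead we use the column structure directly. The columns $i-m$ with $0\le m\le\ell-2$ lie entirely in $T$, and by Lemma~\ref{lem:cost} such a column has cost $2^{n-j-1}\bigl(2^{(j-i)+m}-1\bigr)$, so
\[
    C(T)\ \ge\ 2^{n-j-1}\sum_{m=0}^{\ell-2}\bigl(2^{(j-i)+m}-1\bigr)\ =\ 2^{n-j-1}\bigl(2^{j-i}(2^{\ell-1}-1)-(\ell-1)\bigr).
\]
If $j-i\ge 2$ the factor $2^{j-i}\ge 4$ already forces $C(T)\ge C(R)$ once $i$ is large; if $j-i=1$ the last column is so short that $T$ must spread over $\ell=\Theta(\sqrt i)$ columns, so the term $2^{\ell}$ dominates and again $C(T)\ge C(R)$. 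For the finitely many small $i$ for which these inequalities are not yet clear we verify by computer that no downset meeting the hypotheses is optimal, exactly as was done for $i=16,17$ in Lemma~\ref{lem:trapezoidmove2}; the cutoff that survives is $i\ge 23$.

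For the space, the same geometry bounds $\ell$: each of the $\ell$ columns of $T$ contributes at least $(j-i)+m\ge m+1$ cells, so $\binom{\ell}{2}<\floor{i/4}$ and hence $\ell=O(\sqrt i)$, giving $c=i-\ell\ge i-O(\sqrt i)$. Letting $a$ (with $a=0$ if there is none) be the first coordinate of the earlier corner at height $j+1$, we have $a\le\floor{(i-4)/2}$, so
\[
    S(R)=\frac{c(c+1)}{2}-\frac{a(a+1)}{2}\ \ge\ \frac{3i^2}{8}-O(i^{3/2})\ \ge\ i\floor{i/4}\ \ge\ S(T)
\]
for all $i$ beyond an explicit threshold, the remaining finitely many values again checked by computer. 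As in Lemma~\ref{lem:trapezoidmove2} (and Lemma~\ref{lem:trapezoidstairs2}) the boundary subcase $j-i=1$ is handled by taking $T$ to be the $\floor{i/4}-1$ greatest cells instead, the three comparisons going through by the same computations. The main obstacle is the cost inequality $C(T)\ge C(R)$ in the regime where the last column is short: a full row $R$ can absorb up to about $i$ very cheap cells at height $n-1$, while $T$ has only $\floor{i/4}$ cells, so we cannot rely on averaging and must instead track how far $T$ spreads (which is large precisely when $j-i$ is small) and exploit the fact that cell costs grow geometrically as one moves down a column, falling back on a finite computation for an initial range of $i$.
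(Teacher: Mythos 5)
Your proposal is correct and follows the paper's proof essentially verbatim: the same trade of the $\lfloor i/4\rfloor$ lex-greatest cells of $D$ for a single row at height $n-1$, the same three comparisons ($C$, $S$, and lex order) with the same $\ell=O(\sqrt{i})$ bound coming from the column structure of $T$, and the same fallback to computer verification for an initial range of $i$. The only real differences are cosmetic: you drop the paper's cap $c=\min\{i-\ell,\,i-4\}$ (compensating with the slightly weaker bound $C(R)\le i-1$), and you replace the paper's averaging bound on $C(T)$ (average cost of a cell of $T$ is at least $4$ once $\lvert T\rvert\ge 9$ --- which, contrary to your worry, does suffice here because $C(R)$ shrinks proportionally with the single cheap row) by an explicit geometric column sum, which is a valid and in fact sharper estimate.
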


\begin{proof}
We will prove that there exists a downset $D'$ such that $C(D')\leq C(D)$, $S(D')\geq S(D)$ and that $D'<_L D$.   Let $T$ be the $\left\lfloor\frac{i}{4}\right\rfloor$ greatest cells of $D$ under the lex ordering.  Let $\ell$ be such that $i-\ell+1$ is the least amount of space in any cell of $T$.  That is, $T$ occupies $\ell$ columns.

Let $R$ be the cells in $B_n\setminus D$ at height $n-1$ and with space at most $c =\min\{i-\ell,i-4\}$.  
Letting $a$ be the space in the previous corner (and $a=0$ if there is no previous corner), $R = \{(h,n-1): a+1\leq h \leq c\}$.  The sets of cells $R$ and $T$ are shown in Figure \ref{fig:trapezoidmove3}. Let $D' = D-T+R$.

\begin{center}
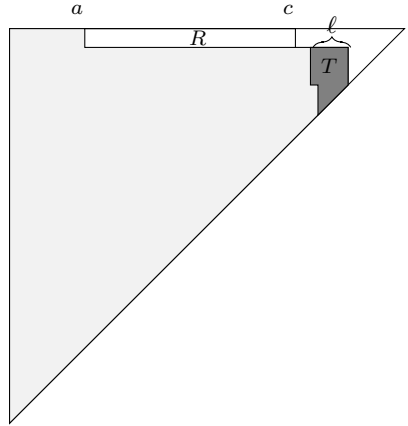
\begin{figure}[!ht]
\begin{tikzpicture}
\draw (0,0) -- (0,5.25) -- (5.25,5.25) -- cycle;
\draw[fill = light-gray] (0,0) -- (0,5.25) -- (1,5.25) -- (1,5) -- (4.5,5) -- (4.5,4.5) -- cycle;
\draw[fill=gray] (4.5,4.5) -- (4.5,5) -- (4,5) --  (4,4.5) -- (4.1,4.5) --(4.1,4.1) -- cycle;
\draw (1,5.25) -- (3.8,5.25) -- (3.8,5);
\draw (3.7,5.5) node {\tiny{$c$}};
\draw (4.25,4.75) node {\tiny{$T$}};
\draw (2.5,5.125) node {\tiny{$R$}};
\draw (.9,5.5) node {\tiny{$a$}};
\draw [decorate,decoration={brace,amplitude=4pt},xshift=1pt,yshift=0pt] (4,5) -- (4.5,5) node [black,midway,yshift=0.3cm]{\scriptsize{$\ell$}}; 
\end{tikzpicture}
\caption{A downset $D$ with $R$ and $T$ as described in the proof of Lemma \ref{lem:trapezoidmove}.}
\label{fig:trapezoidmove3}
\end{figure}
\end{center}

First we will compare $C(T)$ and $C(R)$.  When the size of $T$ is at least $9$, the average cost of a cell in $T$ is at least $4$
and 
$$C(T) \geq 4 \cdot \left\lfloor\frac{i}{4}\right\rfloor \geq 4\left(\frac{i}{4} - \frac{3}{4}\right) = (i-3).$$
  Moreover, since $c\leq i-4$,
$$C(R) \leq 1\cdot (i-5) + 2 = (i-3).$$
 When $5\leq |T|\leq |9|$ we verify by computer that a downset satisfying the constraints is not optimal. Therefore, when $i\geq 20$, $C(T) \geq C(R)$ and so $C(D')\leq C(D)$.

Now we compare $S(T)$ and $S(R)$.  Since the space in any cell of $T$ is at most $i$ and $|T| = \left\lfloor\frac{i}{4}\right\rfloor$, 
$$S(T) \leq \left\lfloor\frac{i}{4}\right\rfloor \cdot i \leq \frac{i^2}{4}.$$

Note $\ell$ is greatest when $j-i$ is least.  When $j-i=1$, if $\ell =\left\lceil\sqrt{\frac{i}{2}}\right\rceil$ then $T$ has at least 
$\left(\sqrt{\frac{i}{2}}\right)^2/2 = \frac{i}{4}\geq \left\lfloor \frac{i}{4}\right\rfloor$ cells.  
So, $i-\sqrt{\frac{i}{2}} - 1 \leq i-\ell$ and, when $i\geq 23$, $i-\sqrt{\frac{i}{2}}-1\leq i-4$.  
Thus, $c = \min\{i-4,i-\ell\} \geq i -\sqrt{\frac{i}{2}} -1$.  

Using a similar argument to that in Lemma \ref{lem:trapezoidmove2},

\[S(R)\geq \frac{3i^2}{8} - \frac{i^{3/2}}{\sqrt{2}} + \frac{i}{2} + \frac{1}{2}\sqrt{\frac{i}{2}} - 1.\]
\begin{comment}
By Lemma \ref{lem:distancevector}, any previous corner has space at most $\frac{i-4}{2}$ which gives 
\begin{align*}
S(R) &\geq (a+1) + (a+2) + \cdots + c\\
&= \frac{c(c+1)}{2} - \frac{a(a+1)}{2}\\
&\geq \frac{\left(i-\sqrt{\frac{i}{2}}-1\right)\left(i-\sqrt{\frac{i}{2}}\right)}{2}- \frac{\left(\frac{i-4}{2}\right) \left(\frac{i-2}{2}\right)}{2}\\
&=\frac{3i^2}{8}-\frac{i^{\frac{3}{2}}}{\sqrt{2}} + \frac{i}{2} + \frac{1}{2}\sqrt{\frac{i}{2}} - 1
\end{align*}
\end{comment}
So $S(R) \geq\frac{3i^2}{8}-\frac{i^{\frac{3}{2}}}{\sqrt{2}} + \frac{i}{2} + \frac{1}{2}\sqrt{\frac{i}{2}} - 1\geq \frac{i^2}{4}\geq S(T)$ when $i\geq 23$, we know that $S(D')\geq S(D)$. Since $D'<_L D$, such a $D$ is not an optimal downset.
\end{proof}

In the final lemma for this section we consider downsets similar to those of Lemma \ref{lem:trapezoidmove}, but end in one short stair.

\begin{lem}
\label{lem:trapezoidstairs}
Suppose that a downset $D$ ends in one short stair.  If the last two corners $(i',j')$ and $(i,j)$ with $i = i'-1$ satisfy $j-i<\lfloor\log_2(i)\rfloor +1$, $i\geq 23$, and $j=n-2$ then $D$ is not an optimal downset.
\end{lem}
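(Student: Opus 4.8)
The plan is to reproduce the proof of Lemma~\ref{lem:trapezoidmove}, incorporating the one-short-stair modifications already used in passing from Lemma~\ref{lem:trapezoidmove2} to Lemma~\ref{lem:trapezoidstairs2}. First a structural remark: since $j=n-2$ and the last corner $(i',j')=(i+1,j')$ lies strictly below the top stair $(i,n-2)$ in the staircase, we get $i+2\le j'\le n-3$, so automatically $j-i\ge 3$; moreover only one row of $B_n$, namely height $n-1$, lies above the top stair, so any trade must push cells into that single row.

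I would argue by contradiction: suppose such a $D$ were optimal, and set up the trade as follows. Let $T$ be the $\lfloor i/4\rfloor$ greatest cells of $D$ in the lex order on cells; because of the short stair these cells now sweep out $\ell+1$ consecutive columns, where $i-\ell+1$ is the smallest space occurring among cells of $T$. Let $a$ be the space of the corner of $D$ immediately preceding $(i,n-2)$, or $a=0$ if there is none; since the ending is a single short stair the horizontal distance before it is at least $3$ (Lemmas~\ref{lem:stairs},~\ref{lem:EndsWithStairs}), so Lemma~\ref{lem:distancevector} gives $a\le\tfrac{i-4}{2}$ (trivially when $a=0$). Put $c=\min\{i-\ell,\ i-4\}$, let $R=\setof{(h,n-1)}{a+1\le h\le c}$ be the corresponding available cells in the top row, and set $D'=D-T+R$. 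Every cell of $R$ has first coordinate at most $c\le i-\ell<i-\ell+1$, hence is lex-earlier than every cell of $T$, so $D'<_L D$; it remains to check $C(D')\le C(D)$ and $S(D')\ge S(D)$, which will contradict optimality.

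The cost inequality is word-for-word that of Lemma~\ref{lem:trapezoidmove}: costs of cells outside the first column depend only on their height, and $T,R$ have the same shapes here, so when $|T|\ge 9$ the average cost of a cell of $T$ is at least $4$ and $C(T)\ge 4\lfloor i/4\rfloor\ge i-3$, while $C(R)\le (i-5)+2=i-3$ because $c\le i-4$; hence $C(D')\le C(D)$. As $i\ge 23$ forces $|T|=\lfloor i/4\rfloor\ge 5$, the finitely many configurations with $5\le|T|\le 8$ are to be settled by computer, just as there. For the space inequality the only changes from that lemma are that a cell of $T$ may now have space $i+1$ (one in the stair column), so $S(T)\le\lfloor i/4\rfloor\,(i+1)$, and that $T$ spans $\ell+1$ rather than $\ell$ columns; the latter, argued by taking $j-i$ as small as possible (which by the above means $j-i=3$), shifts the bound on $\ell$ only by a constant, so one still obtains $c\ge i-\sqrt{i/2}-2$. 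Then $S(R)=\tfrac{c(c+1)}{2}-\tfrac{a(a+1)}{2}$, which with this lower bound on $c$ and $a\le\tfrac{i-4}{2}$ is at least $\tfrac{i^2}{4}\ge S(T)$ for all $i\ge 23$; hence $S(D')\ge S(D)$, contradicting optimality of $D$.

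The one genuine obstacle is the space estimate near the threshold: both modifications (the extra ``$+1$'' in the maximal space of a cell of $T$, and the extra column occupied by $T$, which weakens the lower bound on $c$) eat into the slack that made $i\ge 23$ work in Lemma~\ref{lem:trapezoidmove}, so one must track the $O(i^{3/2})$ error terms carefully to confirm that no larger threshold is forced and that only finitely many small-$|T|$ configurations are left for direct computation.
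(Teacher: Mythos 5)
Your $T$, $R$, $D'$, cost argument, and overall strategy coincide with the paper's; the difference is in how the space inequality $S(R)\ge S(T)$ is established, and there is a genuine gap exactly where you flagged one. The chain $S(R)\ge i^2/4\ge S(T)$ does not hold near the threshold: first, $S(T)\le\lfloor i/4\rfloor(i+1)$, and for $i\equiv 0\pmod 4$ this is $(i/4)(i+1)>i^2/4$, so the second inequality already fails (e.g.\ $i=24$: $S(T)$ bound is $6\cdot 25=150>144=i^2/4$); second, with only $c\ge i-\sqrt{i/2}-2$ and $a\le(i-4)/2$, expanding $c(c+1)/2-a(a+1)/2$ gives a leading term $3i^2/8$ minus an $i^{3/2}/\sqrt{2}$ correction, which falls short of $i^2/4$ unless roughly $i\ge 32$. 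So neither link of the chain is sound for $i$ in the low twenties.

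The paper's proof avoids this by splitting at $i=32$. For $23\le i<32$ it uses that $|T|=\lfloor i/4\rfloor\le 7$, the stair column contributes at most $j-i-2$ cells, and the columns to the left have at least $j-i, j-i+1,\dots$ cells, so $T$ cannot span more than about four columns; hence $\ell\le 4$ and $c=\min\{i-\ell,i-4\}=i-4$ exactly. Plugging $c=i-4$ (rather than the looser $i-\sqrt{i/2}-2$) into $S(R)=c(c+1)/2-a(a+1)/2$ gives $S(R)\ge\frac{(i-4)(i-3)}{2}-\frac{1}{2}\cdot\frac{i-4}{2}\cdot\frac{i-2}{2}$, which does dominate $\lfloor i/4\rfloor(i+1)$ for all $i\ge 23$. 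Only for $i\ge 32$ does the paper fall back on the $c\ge i-\sqrt{i/2}$ estimate, where the slack is large enough. You would need to supply this case split (or an equivalent sharper bound on $\ell$ for small $i$) to close the argument; as written, the uniform estimate you propose does not go through at $i=23,24,\dots$.
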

\begin{proof} 
\begin{comment}
We will prove that there exists a downset $D'$ such that $C(D')\leq C(D)$, $S(D')\geq S(D)$ where $D'<_L D$.  Let $T$ be the $\left\lfloor\frac{i}{4}\right\rfloor$ greatest cells of $D$ under the lex ordering.  Let $\ell$ be such that $i-\ell+1$ is the least amount of space in any cell of $T$.  So $T$ occupies $\ell+1$ columns.
\end{comment}
Using the same setup as in the proof of Lemma \ref{lem:trapezoidmove}, let $R$ be the cells in $B_n\setminus D$ at height $n-1$ and with space at most $c = \min\{i-\ell, i -4\}$.  Allowing a previous corner to have space $a$ (and setting $a=0$ if there is no previous corner),  $R = \{(h,n-1): a+1\leq  h \leq c\}$.   By Lemma \ref{lem:EndsWithStairs} we know  $a\leq \frac{i-4}{2}$.  The sets of cells $R$ and $T$ are shown in Figure \ref{fig:trapezoidmove4}. Let $D' = D-T+R$.

\begin{center}
\begin{figure}[!ht]
\begin{tikzpicture}
\draw (0,0) -- (0,5.25) -- (5.25,5.25) -- cycle;
\draw[fill = light-gray] (0,0) -- (0,5.25) --(1,5.25) --(1,5) -- (3.6,5) -- (3.6,4.75) -- (3.75,4.75) -- (3.75,4.2) -- (3.9,4.2)-- (3.9,3.9)-- cycle;
\draw[fill=gray] (3.6,5) -- (3.75,5) -- (3.75,4.2) -- (3.9,4.2)-- (3.9,3.9)-- (3.4,3.4) -- (3.4,4) -- (3.2,4) --(3.2,5)-- cycle;
\draw (1,5.25) -- (3,5.25) -- (3,5);
\draw (2.9,5.5) node {\tiny{$c$}};
\draw (3.7,5.5) node {\tiny{$i$}};
\draw (.9,5.5) node {\tiny{$a$}};
\draw (3.5,4.35) node {\tiny{$T$}};
\draw (1.8,5.125) node {\tiny{$R$}};
\draw [decorate,decoration={brace,amplitude=3pt},xshift=1pt,yshift=0pt] (3.16,5) -- (3.7,5) node [black,midway,yshift=0.3cm]{\scriptsize{$\ell$}}; 
\end{tikzpicture}
\caption{An example of a downset $D$ with $R$ and $T$ as described in the proof of Lemma \ref{lem:trapezoidstairs}.}
\label{fig:trapezoidmove4}
\end{figure}
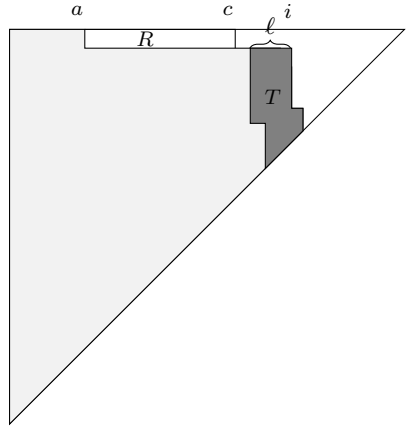
\end{center}

\begin{comment}
First we will compare $C(T)$ and $C(R)$.  Since $D$ ends in one short stair it must be the case that $j-i\geq 3$.   The average cost of a cell in $T$ would be least when $i-j=3$.  Since $i\geq 24$, then $|T| \geq 6$. So, the average cost of a cell in $T$ is at least $4$ and 
$$C(T) \geq 4 \cdot \left\lfloor\frac{i}{4}\right\rfloor \geq 4\left(\frac{i}{4} - \frac{3}{4}\right) = (i-3).$$
  Moreover, since $c\leq i-4$,
$$C(R) \leq 1\cdot (i-5) + 2= (i-3).$$
  Therefore, $C(T) \geq C(R)$ and so $C(D')\leq C(D)$.
\end{comment}

When $23\leq i<32$, we get that $c=i-4$ and by a  counting argument similar to the one in Lemma \ref{lem:trapezoidmove} we get
\[S(R) \geq \frac{(i-4)(i-3)}{2}-\frac{\left(\frac{i-4}{2}\right)\left(\frac{i-2}{2}\right)}{2}\geq \left\lfloor\frac{i}{2}\right\rfloor(i+1)\geq S(T).\]
When $i\geq 32$ by a similar argument again we get $c\geq i-\sqrt{\frac{i}{2}}$ and find $S(R)\geq S(T)$ again.

If $i\geq 23$ then the average cost of a cell is at least $4$ and by an identical argument to that of Lemma \ref{lem:trapezoidmove}, $C(D')\leq C(D)$.

 Finally, $D'<_L D$ and so such a $D$ is not an optimal downset.
\end{proof}

\section{Narrow Downsets and Persistent Exceptions}\label{sec:narrow}

Many of our lemmas thus far required that the last corner $(i,j)$ has $i\ge c$ for some small $c$.  In this section we will deal with the ``narrow" cases, that is, where $i<c$.  The first lemma deals with the case where $D$ does not end in stairs and the second lemma when $D$ ends in stairs.

There are some optimal downsets that are not $(2,3,1)$-lex style which appear as optimal downsets for all $n$.  We define
%for n\geq 28??
$\mathcal{C}_n = \{[2,1],[n-5,n-6],[n-4,n-5], [n-3,n-4,n-5],[n-3,n-4,n-5,n-6]\}$.  Let $\mathcal{H}(\mathcal{C}_n)$ be the hypergraphs generated by the partitions in $\mathcal{C}_n$.  %Let $\mathcal{L}_n$ be all the $(2,3,1)$-lex style hypergraphs on $n$ vertices.

\begin{lem}\label{lem:iSmall}
Suppose that $D$ is a downset in $B_n$ for $n\geq 10$.  Suppose $D$ does not end in stairs, $D$ is not $(2,3,1)$-lex style, $D\notin \mathcal{C}_n$, and the last corner of $D$ is $(i,j)$.  If $i<5$ then $D$ is not optimal.
 \end{lem}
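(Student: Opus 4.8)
The plan is to first determine the possible shapes of such a downset $D$, and then in each case either invoke an earlier lemma or produce an explicitly better downset. Since $D$ does not end in stairs, either $D$ has a single corner, or its second-to-last corner lies at horizontal distance at least $3$ from its last corner $(i,j)$; as $i<5$, that forces the second-to-last corner to have first coordinate $\le i-3\le 1$, hence to equal $(1,j_1)$ for some $j_1>j$, and to be the only earlier corner (no corner can have first coordinate $0$). Thus $D$ is either (a) a ``staircase rectangle'' $R_{i,j}:=\setof{(a,b)\in B_n}{a\le i,\ b\le j}$ with $2\le i\le 4$ (the case $i=1$ is the single column $R_{1,j}$, which is $(2,3,1)$-lex style and hence excluded by hypothesis), or (b) the ``L-shape'' with corners $(1,j_1)$ and $(4,j_2)$, consisting of column $1$ up to height $j_1$ together with columns $2,3,4$ up to height $j_2$, where $j_1>j_2$.

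In case (b) the horizontal distance vector of $D$ is $(3)$ with $o_1=1$, $o_2=4$; since $3\le\frac{o_2+3}{2}=\frac72$, Lemma~\ref{lem:distancevector} shows at once that $D$ is not optimal. So from now on $D=R_{i,j}$ with $i\in\set{2,3,4}$. The value $j=n-1$ makes $R_{i,n-1}$ the initial (cell-)lex segment on columns $1,\dots,i$, and $(i,j)=(2,n-2)$ makes $R_{2,n-2}$ the initial lex segment ``column $1$ full, column $2$ up to height $n-2$'' with the top cell of column $1$ deleted; both are $(2,3,1)$-lex style and so excluded. Since moreover the five downsets of $\mathcal{C}_n$ are exactly $R_{2,3}$, $R_{2,n-4}$, $R_{2,n-3}$, $R_{3,n-2}$ and $R_{4,n-2}$, what remains to handle is $R_{2,j}$ with $4\le j\le n-5$, $R_{3,j}$ with $4\le j\le n-3$, and $R_{4,j}$ with $5\le j\le n-3$.

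For each such $R_{i,j}$ I would exhibit a $(2,3,1)$-lex style downset $D^\ast$ — namely the lex-initial downset of $B_n$ of least space subject to $S(D^\ast)\ge S(R_{i,j})$ — and show $C(D^\ast)\le C(R_{i,j})$ and $D^\ast<_L R_{i,j}$; since an optimal downset is a cost-minimizer that is earliest in lex, this contradicts optimality of $R_{i,j}$. Depending on the size of $S(R_{i,j})=\sum_{a=1}^i a(j-a)$, this $D^\ast$ is a single column $R_{1,S(R_{i,j})+1}$, or ``column $1$ full followed by an initial segment of column $2$ (and perhaps of column $3$, $4$)'', or ``columns $1,\dots,i-1$ full plus column $i$ with its highest, hence cheapest, cells deleted''. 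In each regime column $1$ of $D^\ast$ is strictly fuller than column $1$ of $R_{i,j}$ — using $\sum_{a=1}^i a(j-a)\ge (j-1)+2(j-2)=3j-5>j-1$, the last quantity being the space of column $1$ of $R_{i,j}$, together with $j\le n-2$ — so $\min_{\mathrm{Lex}}(D^\ast\symd R_{i,j})=(1,j+1)\in D^\ast$ and $D^\ast<_L R_{i,j}$ follows. The cost inequality is then a computation: Lemma~\ref{lem:cost} gives the closed form $C(R_{i,j})=2^n-2^{n-i-1}-(i+1)2^{n-j-1}$ and an analogous closed form for $C(D^\ast)$ in each regime, and checking $C(D^\ast)\le C(R_{i,j})$ reduces to comparing a few powers of two. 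The delicate feature — and the part I expect to be the main obstacle — is that this inequality is tight exactly at the downsets excluded above (the members of $\mathcal{C}_n$ and the genuinely $(2,3,1)$-lex style $R_{i,n-1}$, $R_{2,n-2}$), so the boundary of the argument must be pinned down carefully; a short finite computation handles the handful of sub-cases near that boundary and the smallest values of $n$, the hypothesis $n\ge 10$ being what is needed for the estimates to become decisive.
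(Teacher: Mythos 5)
Your proposal is substantially correct but takes a genuinely different route from the paper. The structural reduction is the same: since $D$ does not end in stairs and $i<5$, any earlier corner would have first coordinate $\le i-3\le 1$, so $D$ is either a one-corner ``rectangle'' $R_{i,j}$ with $2\le i\le 4$, or the L-shape with corners $(1,j_1)$ and $(4,j_2)$. The paper dispatches the L-shape the same way (via Lemma~\ref{lem:distancevector}, though it phrases this as ``there can be no previous corners''), and your enumeration of the excluded cases --- $R_{i,n-1}$, $R_{2,n-2}$ as $(2,3,1)$-lex style, and $\mathcal{C}_n=\{R_{2,3},R_{2,n-4},R_{2,n-3},R_{3,n-2},R_{4,n-2}\}$ --- is an accurate decoding of the paper's partition notation.

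Where the two proofs genuinely diverge is in how the remaining $R_{i,j}$ are killed. The paper exhibits, for each $i\in\{2,3,4\}$ and each sub-range of $j$, a \emph{small local move}: delete two or three cells from the top of the rectangle and add a short explicit list of cheaper low-column cells (the $j=4$ and $j=5$ sub-cases for $i=3,4$ use a slightly larger move that is exactly where the hypothesis $n\ge 10$ enters). These moves are entirely self-contained --- cost and space comparisons are between two or three powers of two each. You instead compare $R_{i,j}$ \emph{globally} to the minimal lex-initial downset $D^*$ with $S(D^*)\ge S(R_{i,j})$; the lex comparison argument (column $1$ of $D^*$ strictly longer because $S(R_{i,j})\ge 3j-5>j-1$, so $(1,j+1)\in D^*\setminus R_{i,j}$, using $j\le n-2$) is correct, your closed form $C(R_{i,j})=2^n-2^{n-i-1}-(i+1)2^{n-j-1}$ is correct, and the analogous formula $C\bigl(L_k(m)\bigr)=2^n-2^{n-k-1}-2^{n-k-m-1}-k$ for $k\ge 2$ (with a separate form when $D^*$ sits entirely in column $1$) makes the cost check a comparison of a handful of powers of two. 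The trade-off is that your approach is conceptually cleaner and avoids inventing bespoke moves, but the cost inequality $C(D^*)\le C(R_{i,j})$ really is the crux, and you correctly flag that it is delicate: I spot-checked several boundary cases, and the inequality does hold with small (sometimes constant) margin for $R_{2,j}$ with $4\le j\le n-5$, $R_{3,j}$ with $4\le j\le n-3$, $R_{4,j}$ with $5\le j\le n-3$ once $n\ge 10$, while it fails exactly on the members of $\mathcal{C}_n$ (e.g.\ $C(D^*)-C(R_{4,n-2})=+2$). In particular the $n\ge 10$ hypothesis is also needed in your version (for $R_{3,4}$ with $n=9$ the inequality fails). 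So the approach is sound, but as written the cost step is a claim rather than a proof; to be complete you would need to carry out the three-regime case analysis (column-$1$-only; column $1$ full plus part of column $2$; columns $1,\dots,i-1$ full plus part of column $i$) that you sketch, which is a modest but nontrivial amount of bookkeeping compared with the paper's one-line moves.
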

\begin{proof}
Throughout this proof we use the fact that $j>i$ and that if $i<5$ then there can be no previous corners by Lemma \ref{lem:distancevector}.  If $i=1$ then $D$ is $(2,3,1)$-lex style.  If $i=2$ and $j\ge n-3$ then $D$ is $(2,3,1)$-lex style or $D\in\cC_n$.  If $i=2$ and $4\le j\le n-4$ then $D-\{(2,j),(2,j-1)\} + \{(1,j+1),(1,j+2),(1,j+3),(1,j+4)\}$ shows $D$ is not optimal.  If $j=3$ then $D\in\cC_n$.  

Suppose $i=3$.  If $j\ge n-2$ then $D\in\cC_n$ or $D$ is $(2,3,1)$-lex style.  If $5\le j\le n-3$ then $D-\{(3,j),(3,j-1)\} + \{(1,j+1),(1,j+2),(2,j+1),(2,j+2)\}$ shows that $D$ is not optimal.  If $j=4$ then, recalling $n\geq 10$, we see that $D-\{(2,4),(3,4)\} + \{(1,k): 5\le k \le 9\}$ shows $D$ is not optimal.

Finally, suppose $i=4$.  If $j\ge n-2$ then $D$ is $(2,3,1)$-style or $D\in\cC_n$.  If $6\le j\le n-3$ then $D - \{(4,j),(4,j-1)\} + \{(1,j+1),(1,j+2),(2,j+1),(2,j+2),(3,j+1),(3,j+2)\}$ shows $D$ is not optimal. If $j=5$ then $D - \{ (4,5),(3,5),(3,4)\} + \{(m,n): 1\le m \le 2, 6\le n \le 9\}$ shows $D$ is not optimal.
\end{proof}

\begin{lem}\label{lem:OneShortiSmall}
Suppose that $D$ is a downset that is not $(2,3,1)$-lex style, that $D$ ends in one short stair, and the second to last corner $(i,j)$ has $i<6$.  Then $D$ is not optimal.
\end{lem}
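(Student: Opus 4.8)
The plan is to run the narrow‑case argument of Lemma~\ref{lem:iSmall} again, but anchored at the second‑to‑last corner. Write $(i+1,j')$ for the last corner, so that $j'\le j-1$ and, since $(i+1,j')\in B_n$, also $i+2\le j'$. First I would reduce to the case that $D$ has exactly the two corners $(i,j)$ and $(i+1,j')$. If there were a corner $(i'',j'')$ before $(i,j)$, then because $D$ ends in \emph{one} short stair — not two short stairs, nor a long stair followed by a short stair, cases already handled by Lemmas~\ref{lem:stairs} and \ref{lem:EndsWithStairs} — the horizontal gap satisfies $i-i''\ge 3$; and since $i''\ge 1$ and $i\le 5$ we get $3\le i-i''\le i-1\le\frac{i+3}{2}$, so Lemma~\ref{lem:distancevector} already shows $D$ is not optimal. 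Hence I may assume $D=\setof{(a,b)}{(a,b)\comple(i,j)}\cup\setof{(a,b)}{(a,b)\comple(i+1,j')}$. I would then record the restriction on $j$ forced by $D$ not being $(2,3,1)$‑lex style: comparing this two‑corner shape with the two clauses of the definition, for $i\ge 2$ the only $(2,3,1)$‑lex style such $D$ is the lex initial segment with $j=n-1$, while for $i=1$ both $j=n-1$ (an initial segment) and $j=n-2$ (an initial segment with the top cell of its second‑to‑last column deleted) qualify — a smaller $j$ would require deleting two cells and hence produce two short stairs. So the hypothesis gives $j\le n-2$ when $i\ge 2$ and $j\le n-3$ when $i=1$.

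The body of the proof is then a case analysis on $i\in\{1,\dots,5\}$, in each case exhibiting a downset $D'$ with $S(D')\ge S(D)$, $C(D')\le C(D)$ and $D'<_L D$, so $D$ is not optimal. For $i=1$: take $D'=D-(2,j')+(1,j+1)+(1,j+2)$ (legal since $j\le n-3$); the space is unchanged, the removed cost $2^{n-j'-1}\ge 2^{n-j}$ exceeds the added cost $2^{n-j-1}+2^{n-j-2}$, and the lex‑least cell of $D\symd D'$ is $(1,j+1)\in D'$. For $2\le i\le 5$ with $j'\le j-2$: take $D'=D-(i+1,j')+\setof{(a,j+1)}{1\le a\le i}$; here $j+1\le n-1$, the added space $\binom{i+1}{2}$ is at least the removed space $i+1$, the added cost $(i+1)2^{n-j-2}$ is at most the removed cost $2^{n-j'-1}\ge 2^{n-j+1}$, and again $(1,j+1)\in D'$ is lex‑least in the symmetric difference. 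For $i\in\{2,3\}$ this same move still works when $j'=j-1$, since then $(i+1)2^{n-j-2}\le 2^{n-j}=2^{n-j'-1}$; for $i\in\{4,5\}$ with $j'=j-1$ I would instead use $D'=D-(i+1,j')+(1,j+1)+(2,j+1)+(3,j+1)$, which adds space $6\ge i+1$ at cost exactly $2^{n-j}=2^{n-j'-1}$. In every case one checks routinely that $D'$ is a downset of $B_n$ (each added cell sits directly above a cell already in $D$) and that $D'\ne D$. The finitely many genuinely small configurations — those for which a prescribed new cell would fall outside $B_n$, together with all $n$ below the bound of this section — are checked by computer, exactly as in the other lemmas of Sections~\ref{sec:moves}--\ref{sec:narrow}.

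The main obstacle is not conceptual but is the bookkeeping: the inequalities $S(D')\ge S(D)$ and $C(D')\le C(D)$ are tight in several sub‑cases (notably $j'=j-1$), so one must be careful about exactly which new cells to add, and a small number of truly small instances must be pushed to computation rather than handled by the uniform move.
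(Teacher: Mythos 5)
Your proposal is correct and follows essentially the same route as the paper's proof: eliminate any earlier corner via Lemma~\ref{lem:distancevector} (using $i\le 5$), use the non-$(2,3,1)$-lex-style hypothesis to bound $j$, and then for each $i\in\{1,\dots,5\}$ trade the last corner for cells in row $j+1$ (or rows $j+1,j+2$ when $i=1$), which is exactly the paper's case analysis up to adding a couple of extra row cells in the $j'\le j-2$ subcases. Your cost/space bookkeeping is if anything more careful than the paper's, and the closing appeal to computation is not actually needed since all prescribed cells lie in $B_n$ under the stated constraints.
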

\begin{proof}
Let $(i,j)$ be the second to last corner and $(i',j')$ be the last corner.  By Lemma \ref{lem:distancevector} any previous corner must have space less than $\frac{i-3}{2}$.  Since $i\leq 5$ in all cases there are no previous corners.  Since $D$ is not $(2,3,1)$-lex style we know $j<n-1$.  If $i=1$ and $j=n-2$ then $D$ is $(2,3,1)$-lex style.  If $i=1$ and $j<n-2$ then there are at least two empty rows.  Then $D - (i',j') + (i,j+1)+(i,j+2)$ has the same cost and space and is earlier in lex order.  If $i=2$ then $D - (i',j') + (1,j+1)+(2,j+1)$ has the same amount of space and costs less.  If $3\le i \le 5$ then $D -(i',j')+(1,j+1)+(2,j+1)+(3,j+1)$ has at least as much space and at most the cost and is earlier in lex order.  %(Cost argument: $(1,j+1)$ has at most $1/2$ the cost and each of $(2,j+1)$ and $(3,j+1)$ has at most $1/4$ the cost.)
\end{proof}

\section{Downset Extensions}\label{sec:Extensions}

In this section we will consider a downset $D$ in $B_n$ inside $B_{\ell}$ for $\ell>n$.  We will show that if $D$ is not optimal in $B_n$ then $D$ is not optimal in $B_{\ell}$ either, and a similar lemma for when $D$ is optimal.
 \begin{defi}
  Given a downset $D$ in $B_n$, let the \emph{extension} of $D$, denoted $\overline{D}$, be the downset in $B_{n+1}$ where $(i,j)\in\overline{D}$ if and only if $(i,j)\in D$.
  \end{defi}
  
  \begin{lem}\label{lem:extension}
  Suppose $D$ is not an optimal downset in $B_n$.  Then the extension of $D$ is not optimal in $B_{n+1}$.
  \end{lem}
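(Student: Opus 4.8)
The plan is to reduce ``not optimal'' to the existence of a single witnessing downset at the same space level, and then to check that the extension operation leaves space and lex order alone while rescaling every cost by one common factor, so that a witness in $B_n$ pushes forward to a witness in $B_{n+1}$.

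First I would prove the reformulation of optimality that is already implicit in Sections~\ref{sec:moves}--\ref{sec:narrow}: a downset $D$ in $B_m$ is \emph{not} optimal if and only if there is a downset $D'$ in $B_m$ with $S(D')\ge S(D)$ and either $C(D')<C(D)$, or $C(D')=C(D)$ and $D'<_L D$. The content here is the observation that if $D$ is optimal for some value $e$ then it is already optimal for the value $e'=S(D)$: since $S(D)\ge e$, the minimum of $C$ over downsets of space at least $S(D)$ is squeezed between the minimum over downsets of space at least $e$ (which equals $C(D)$) and $C(D)$ itself, hence equals $C(D)$; and a strictly lex-earlier downset of space at least $S(D)$ and cost $C(D)$ would already be a strictly lex-earlier cost-minimizer at level $e$, contradicting optimality for $e$. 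So failing to be optimal is exactly having such a ``witness''.

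Next I would record how $\overline{\,\cdot\,}$ acts on the three relevant quantities. The cells of $\overline E$ are literally the cells of $E$, so $S(\overline E)=S(E)$, using that the space of a cell $(i,j)$ is $i$ regardless of the ground set. By Lemma~\ref{lem:cost}, the cost of the cell for an edge $\{0,a,b\}$ on a ground set of size $m$ is $2^{m-1}$, $2^{m-b}$, or $2^{m-b-1}$ according only to its position, so passing from $m=n$ to $m=n+1$ multiplies every cell cost by $2$; thus $C_{B_{n+1}}(\overline E)=2\,C_{B_n}(E)$ for every downset $E$ in $B_n$. Finally, $\overline{E_1}\symd\overline{E_2}=E_1\symd E_2$ and the lex order on cells does not depend on the ambient poset, so $\overline{E_1}<_L\overline{E_2}$ if and only if $E_1<_L E_2$.

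Putting these together proves the lemma: from $D$ not optimal in $B_n$ take a witness $D'$ in $B_n$ as above; then $\overline{D'}$ satisfies $S(\overline{D'})=S(D')\ge S(D)=S(\overline D)$, it satisfies $C_{B_{n+1}}(\overline{D'})=2C_{B_n}(D')\le 2C_{B_n}(D)=C_{B_{n+1}}(\overline D)$ with strict inequality exactly when $C_{B_n}(D')<C_{B_n}(D)$, and $\overline{D'}<_L\overline D$ exactly when $D'<_L D$, so $\overline{D'}$ is a witness showing $\overline D$ is not optimal in $B_{n+1}$. The only step I expect to require genuine care is the optimality reformulation; the transformation facts are routine, the crux being simply that scaling every cost by a common constant preserves every cost comparison and every tie, so both ``optimal'' and ``not optimal'' transfer verbatim under extension.
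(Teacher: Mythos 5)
Your proof is correct and follows essentially the same route as the paper's: the space of each cell is unchanged by extension, every cost is doubled when passing from $B_n$ to $B_{n+1}$, and the lex order on downsets is independent of $n$, so a witness to non-optimality in $B_n$ pushes forward to one in $B_{n+1}$. Your explicit justification that non-optimality is equivalent to the existence of such a witness (by specializing to $e=S(D)$) is a detail the paper elides, and your disjunctive form of the witness (strictly cheaper, or equally cheap and lex-earlier) is in fact slightly more careful than the paper's, which asserts a witness that is simultaneously no more costly and lex-earlier.
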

  \begin{proof}
  Suppose that $D$ is not an optimal downset in $B_n$ and let $\overline{D}$ be the extension of $D$.    Then there exists a downset $D'$ in $B_n$ such that $S(D')\geq S(D)$, $C(D')\leq C(D)$, and $D'$ is earlier in lex order.  We claim that $\overline{D}$ is not an optimal downset in $B_{n+1}$.  Consider $\overline{D'}$.  Then 
 
  $$S(\overline{D'}) = \sum_{(i,j)\in \overline{D'}} i = \sum_{(i,j)\in D'} i = S(D') \geq S(D) = \sum_{(i,j) \in A} i = \sum_{(i,j)\in\overline{D}} i = S(\overline{D}).$$ 
  
Recall the cost of a cell $(i,j) \in D$ with $i\neq 1$ is $2^{n-j-1}$.  The cost of the same cell $(i,j)$ in $\overline{D}$ is $2^{(n+1)-j-1} = 2(2^{n-j-1})$.  This works similarly when $i=1$ and thus the cost of $D$ is half the cost of its extension. %($C(D) = \frac{1}{2} C(\overline{D})$).  
So,
  $$C(\overline{D'}) =  2 C(D')\geq 2 C(D) = C(\overline{D}).$$
  
  Moreover, $\overline{D'}<_L\overline{D}$ since $D'<_LD$ and our definition for the lex ordering on downsets is independent of $n$. Therefore, if $D$ is not an optimal downset in $B_n$ then its extension is not an optimal downset in $B_{n+1}$.
    \end{proof}

\begin{lem}\label{lem:4EmptyRows}
Let $D$ be a downset in $B_n$ with first corner $(a,b)$ where $n-1-b\geq 4$ and last corner $(i,j)$ where $i\geq 6$.  Then $D$ is not optimal.
\end{lem}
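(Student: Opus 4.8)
The plan is to produce, for every such $D$, a downset $D'$ with $S(D')\ge S(D)$, $C(D')\le C(D)$ and $D'<_L D$; this is exactly what it means for $D$ not to be optimal.

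I would first record the shape forced on $D$ by the hypotheses. Since $(a,b)$ is the first corner it has the largest second coordinate of any corner, so $b$ is the top row used by $D$, and no corner has first coordinate $<a$. An easy induction then shows that for $c\le a$ the top cell $(c,m_c)$ of column $c$ cannot be a corner, so it fails to be maximal, forcing $m_1\le m_2\le\cdots\le m_a$; on the other hand $(c,b)\comple(a,b)\in D$ gives $m_c\ge b$. Hence $m_c=b$ for all $c\le a$: columns $1,\dots,a$ are completely filled up to row $b$, and (if $(a,b)\neq(i,j)$) columns $a+1,\dots,i$ form a weakly decreasing staircase of heights ending at the last corner $(i,j)$. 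By hypothesis rows $b+1,\dots,b+4$ are empty, so any block of cells supported on columns $1,\dots,a$ and rows $b+1,\dots,b+4$ can be adjoined to $D$ with the result still a downset.

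The main move is then: delete from the right end of $D$ a set $T$ of cells that are late in the lex order on cells, and adjoin a block $R$ supported on columns $1,\dots,p$ (with $p\le a$) in the four empty rows $b+1,\dots,b+4$. If the last column is tall enough one takes $T=\{(i,j),(i,j-1),\dots,(i,j-t+1)\}$, so $S(T)=ti$ and $C(T)=2^{n-j-1}(2^{t}-1)$; otherwise one takes $T$ to be the $t$ lex-greatest cells of $D$, which spill into columns $i-1,i-2,\dots$, exactly as in Lemmas~\ref{lem:trapezoidmove2} and \ref{lem:trapezoidmove}. For $R$ a full $p\times4$ block, $S(R)=2p(p+1)$ and, using $b\ge j$, $C(R)<2^{n-j-1}(p+1)$. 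Choosing $t=\lceil\log_2(p+2)\rceil$ makes $C(R)\le C(T)$ automatically, so $C(D')\le C(D)$; and since $R$ contains $(1,b+1)\notin D$ while every removed cell lies in a column $>1$, the lex-minimal cell of $D\Delta D'$ lies in $R\subseteq D'$, so $D'<_L D$. The space inequality $S(D')\ge S(D)$ now reduces to $2p(p+1)\ge ti$, i.e.\ to $p$ (hence $a$) being large enough relative to $i$; using $i\ge6$ this holds comfortably once $a$ is, say, at least a fixed fraction of $i$.

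The crux, and the step I expect to be the main obstacle, is the narrow-at-the-top regime, where $a$ is small compared with $i$ and the four empty rows cannot absorb enough space for the crude block above (and, relatedly, the subcase where the last column is too short to supply the needed cells). Here I would invoke the corner-structure restrictions already proved for optimal downsets: Lemma~\ref{lem:distancevector} severely limits the admissible horizontal gaps between the corners of the staircase joining column $a$ to column $i$, and Lemmas~\ref{lem:stairs} and \ref{lem:DropSizes} further constrain the stair/drop pattern, so a width-$\ge6$ downset with small $a$ has a very rigid shape. On such a shape one performs a tailored move that removes cells only from the staircase columns $a+1,\dots,i$ (never touching column $1$, so the move stays lex-earlier) and refills columns $2,\dots,a$ in the empty rows. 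Finally, since every downset of bounded width is an extension of a downset living in a board of bounded size, one may—via the reverse direction of Lemma~\ref{lem:extension}—reduce the residual configurations, together with the small boundary values $i=6,7$, to a finite check performed by computer, as was done in Lemmas~\ref{lem:trapezoidmove2} and \ref{lem:trapezoidmove}.
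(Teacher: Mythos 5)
Your plan shares the broad idea of the paper's proof---remove a set $T$ of lex-greatest cells and refill in the empty rows above---but the shape of the refill is different, and that difference is exactly where your argument has a genuine gap. You place a flat $p\times 4$ block at rows $b+1,\dots,b+4$, which is only possible in columns $1,\dots,p$ with $p\le a$, and then your space inequality $2p(p+1)\ge ti$ requires $a$ to be large relative to $i$. The ``narrow-at-the-top'' regime you identify is not actually covered: nothing in the hypotheses rules out, e.g., a downset with first corner $(2,b)$ and last corner $(i,j)$ for $i$ large (Lemma~\ref{lem:distancevector} only excludes horizontal gaps in the interval $[3,(o_{i+1}+3)/2]$, so a large gap is permitted), and your fallback---``a tailored move'' plus a finite computer reduction via extensions---is a strategy sketch, not a proof. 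It would also be a heavier argument than anything the paper uses: the paper proves this lemma with a single uniform move and no computer search. A secondary issue: the bound $C(T)=2^{n-j-1}(2^t-1)$, which underlies your choice $t=\lceil\log_2(p+2)\rceil$, only holds when $T$ lies entirely in column $i$; once $T$ spills into columns $i-1,i-2,\dots$ it contains cells at heights $\ge j$ whose costs are $\le 2^{n-j-1}$, so that lower bound on $C(T)$ fails and the cost comparison needs to be redone.

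The paper avoids the narrowness obstruction by not insisting the new cells sit at fixed heights $b+1,\dots,b+4$. Instead it sets $T$ to be the $\lfloor i/2\rfloor$ lex-greatest cells and adds, for each column $c=1,\dots,\lfloor i/2\rfloor$, the four cells directly above the current top $(c,m_c)$ of that column: $S(c)=\{(c,m_c+1),\dots,(c,m_c+4)\}$. Every such column is nonempty (because $(i,j)\in D$), each shelf fits in $B_n$ (because $m_c\le b\le n-5$), and the result is still a downset. The space added is $4\sum_{c=1}^{\lfloor i/2\rfloor}c=2\lfloor i/2\rfloor(\lfloor i/2\rfloor+1)\ge i\lfloor i/2\rfloor\ge S(T)$ for every $i$, and the cost comparison pairs each shelf $S(c)$ with a distinct cell of $T$ at height $\le m_c$ (automatic, since $T$ lives in columns to the right of $\lfloor i/2\rfloor$), with one extra cell of $T$ at height $\le b-1$ covering the doubled cost of column $1$. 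No condition on $a$ is needed. If you want to salvage your write-up, the fix is to abandon the fixed-height block and adopt the per-column shelf $\mathcal{S}$; the rest of your framework (lex comparison, choice of $T$) then goes through essentially as in the paper.
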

\begin{proof}
Let $D$ be such a downset.  We will construct a downset $D'$ that has at least as much space and costs at most as much.    Let $T$ be the $\left\lfloor\frac{i}{2}\right\rfloor$ greatest cells of $D$ under lex ordering.  If $(c,d)$ is the top cell in column $c$, let $S(c) = \{(c,d+1), (c,d+2), (c,d+3), (c,d+4)\}$ and let 
$$\mathcal{S} = \bigcup_{c=1}^{\lfloor\frac{i}{2}\rfloor} S(c).$$

Let $D'= D -T + \mathcal{S}$.  First we claim that $C(D') \leq C(D)$.  If $(c,d)$ is the top cell in column $c$ there is a corresponding cell $(e,f)$ in $T$ such that $f\leq d$ and so, if $c\neq 1$, then $C(S(c))=2^{n-d-1}-2^{n-d-5} \leq 2^{n-f-1}=C((e,f))$.  This argument holds for each column of $\mathcal{S}$ with a distinct cell of $T$. The cost of the first column is double, but there is at least one cell with height at most $b-1$ in $T$ that accounts for this.

Next we claim $S(D')\geq S(D)$.  In the adding of 4 rows we get that the new space is
$$4 \cdot \frac{(\lfloor i/2 \rfloor)(\lfloor i/2\rfloor + 1)}{2}.$$
The space in the removed cells is at most $i\cdot \left(\left\lfloor \frac{i}{2}\right\rfloor\right)$ so $S(D')\geq S(D)$.  Finally, $D'<_L D$.
\end{proof}

\begin{cor}\label{cor:ExtensionOfOptimals}
For $n\ge 10$, if $D$ is optimal in $B_n$ and $D\neq [2,1]$ then $D$ is not optimal in $B_{n+4}$.
\end{cor}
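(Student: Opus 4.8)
The plan is to show that $D$, regarded now as a downset in $B_{n+4}$, is not optimal there. The key preliminary observation is that every cell of $D$ lies in $B_n$, hence has second coordinate at most $n-1=(n+4)-5$; so inside $B_{n+4}$ the top four rows are empty and the topmost (equivalently, leftmost) corner $(a,b)$ of $D$ satisfies $(n+4)-1-b\ge 4$. Thus $D$ automatically meets the ``first corner'' hypothesis of Lemma~\ref{lem:4EmptyRows} inside $B_{n+4}$, and the whole argument reduces to how narrow $D$ is. Extending a downset does not move its corners, so if the last corner $(i,j)$ of $D$ has $i\ge 6$, then in $B_{n+4}$ the downset $D$ still has last corner $(i,j)$ with $i\ge 6$ and at least four empty rows on top, and Lemma~\ref{lem:4EmptyRows} gives at once that it is not optimal.

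So I would then assume the last corner of $D$ has first coordinate $i\le 5$, i.e. $D$ is narrow, and bring in the structural results already proved. Since $D$ is optimal in $B_n$, Lemmas~\ref{lem:stairs} and~\ref{lem:EndsWithStairs} allow at most one short stair at the end of $D$, Lemma~\ref{lem:distancevector} forces any earlier corner to lie far to the left, and together with the narrow-case Lemmas~\ref{lem:iSmall} and~\ref{lem:OneShortiSmall} this pins $D$ down to being either $(2,3,1)$-lex style or one of the members of $\mathcal{C}_n$. Now I would check each surviving possibility against $B_{n+4}$. If $D\in\mathcal{C}_n\setminus\{[2,1]\}$, then $D$ is one of the four single-corner downsets $[n-5,n-6]$, $[n-4,n-5]$, $[n-3,n-4,n-5]$, $[n-3,n-4,n-5,n-6]$, each pinned against the top rows of $B_n$; its image in $B_{n+4}$ is still narrow, has four empty rows above it, does not end in stairs, is not $(2,3,1)$-lex style in $B_{n+4}$ (its first column is far from full), and is not a member of $\mathcal{C}_{n+4}$ (whose members are pinned against the top of $B_{n+4}$, and whose ``$n$-dependent'' shapes no longer coincide with these once $n\ge 10$), so Lemma~\ref{lem:iSmall} applied inside $B_{n+4}$ shows it is not optimal. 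If instead $D$ is narrow and $(2,3,1)$-lex style, then either the first column of $D$ is full in $B_n$, in which case in $B_{n+4}$ the first column is short four cells while a later column is still occupied, so $D$ is again neither $(2,3,1)$-lex style nor in $\mathcal{C}_{n+4}$ and Lemmas~\ref{lem:iSmall}/\ref{lem:OneShortiSmall} finish it; or the first column is not full, and then $D$ is a short initial segment of the first column, disposed of by a one-line cost comparison with $[2,1]$ and its immediate neighbours.

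The hard part will be the bookkeeping in this narrow case: one must confirm that \emph{every} narrow optimal downset of $B_n$ other than $[2,1]$ genuinely leaves both protected families -- the $(2,3,1)$-lex style downsets and $\mathcal{C}_{n+4}$ -- once pushed into $B_{n+4}$, and in particular that the ``pinned to the top'' members of $\mathcal{C}_n\setminus\{[2,1]\}$ and the full-first-column $(2,3,1)$-lex style downsets all do so, whereas $[2,1]$, being anchored at the single bottom corner, is literally unchanged under extension and remains optimal, which is exactly why it must be excluded. Once the narrow optimal downsets are catalogued (this is where the full force of Sections~\ref{sec:moves} and~\ref{sec:narrow} is used), each individual verification is routine and, combined with Lemma~\ref{lem:extension}, yields that an optimal downset other than $[2,1]$ can be optimal only for $n$ in an interval of length at most three -- the form in which the corollary is used later.
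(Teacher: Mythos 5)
Your wide case is exactly the paper's: you observe that every cell of $D$ has second coordinate at most $n-1$, so in $B_{n+4}$ there are at least four empty rows above the first corner, and then Lemma~\ref{lem:4EmptyRows} disposes of any $D$ whose last corner has $i\ge 6$. Where you diverge is the narrow case. The paper does not attempt a catalogue of the narrow optimal downsets of $B_n$ at all: it simply asserts that a $D$ with $i\le 5$ cannot be $(2,3,1)$-lex style in $B_{n+4}$ nor lie in $\mathcal{C}_{n+4}$ (``since then $D$ would not fit inside $B_n$'') and then applies Lemma~\ref{lem:iSmall} inside $B_{n+4}$. Your catalogue-and-check route is a legitimate alternative skeleton, and your treatment of the members of $\mathcal{C}_n\setminus\{[2,1]\}$ and of the full-first-column lex-style downsets is in the right spirit.

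The genuine gap is your final sub-case. A proper initial segment of the first column, $D=\{(1,2),\dots,(1,k)\}$ with $k\le n-1$, fits inside $B_n$, is again an initial segment of lex order in $B_{n+4}$ (hence $(2,3,1)$-lex style there), and cannot be ``disposed of by a one-line cost comparison with $[2,1]$'': for example $\{(1,2)\}$ is the unique minimum-cost downset with space at least $1$ in every $B_m$, and $\{(1,2),(1,3)\}$ likewise for space at least $2$, so these prefixes remain optimal in $B_{n+4}$ and no local move eliminates them. So that step of your argument fails as written. (In fairness, the paper's narrow-case sentence silently passes over the same family --- its ``would not fit inside $B_n$'' claim is false for first-column prefixes --- and the corollary is only ever invoked in Section~\ref{sec:UBonN} for downsets with a prescribed non-lex-style corner structure, where the issue does not arise.) To make your route work you must restrict from the outset to $D$ that are not $(2,3,1)$-lex style (or otherwise exclude the first-column prefixes alongside $[2,1]$), rather than trying to kill them by a cost comparison. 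A smaller point: your catalogue claim that every narrow optimal downset of $B_n$ is lex style or in $\mathcal{C}_n$ is not justified for last corner $i=5$, since Lemma~\ref{lem:iSmall} covers only $i<5$ and Lemma~\ref{lem:OneShortiSmall} only the one-short-stair endings; the paper's own proof has the same $i=5$ slip, but if you are doing the bookkeeping explicitly you need to address it.
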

\begin{proof}
Suppose $D$ is optimal in $B_n$, the first corner of $D$ is $(a,b)$ and the last corner of $D$ is $(i,j)$.  Note that $(n+4)-1-b \geq 4$ and so if $i\geq 6$ then $D$ is not optimal in $B_{n+4}$ by Lemma \ref{lem:4EmptyRows}.  Suppose now that $i\leq 5$.  Then $D$ is not $(2,3,1)$-style in $B_{n+4}$ and $D\notin \mathcal{C}_{n+4}$ since then $D$ would not fit inside $B_n$.  Thus, by Lemma \ref{lem:iSmall} $D$ is not optimal in $B_{n+4}$.
\end{proof}

\section{Upper Bounds on $n$}\label{sec:UBonN}

\begin{lem}\label{lem:FitsInside1}
Suppose that $D$ does not end in stairs and has last corner $(i,j)$ with $j-i<\floor{\lg i}$, $j=n-2$. %, and $i<36$.  
Then $D$ is not optimal for any $n\geq 32$.
\end{lem}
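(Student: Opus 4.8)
The plan is to show that this statement is an immediate corollary of Lemma~\ref{lem:trapezoidmove}. That lemma already establishes exactly the conclusion we want — that $D$ is not an optimal downset — under the hypotheses that $D$ does not end in stairs, $j = n-2$, and $j - i < \floor{\lg i}$, together with \emph{one extra} hypothesis, $i \geq 23$. So the entire task here is to verify that this extra hypothesis is automatic once $n \geq 32$, given the constraints $j = n-2$ and $j-i < \floor{\lg i}$.

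First I would note that since $(i,j) \in B_n$ with $j = n-2$ we have $i < n-2$, and the hypothesis $j - i < \floor{\lg i}$ rearranges to $i > (n-2) - \floor{\lg i}$. Then I would split into two cases on the size of $i$. If $i \geq 32$ there is nothing to check, since then $i \geq 23$ already. If instead $i \leq 31$, then $\floor{\lg i} \leq 4$ (because $i < 2^5$), and therefore $i > (n-2) - 4 = n - 6 \geq 26$, so $i \geq 27 \geq 23$ in this case as well. Hence in all cases $i \geq 23$, and the hypotheses of Lemma~\ref{lem:trapezoidmove} are satisfied; applying that lemma produces a downset $D'$ with $C(D') \leq C(D)$, $S(D') \geq S(D)$, and $D' <_L D$, so $D$ is not optimal.

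I do not expect a genuine obstacle here. All of the substantive work — the trapezoidal cell-exchange move from $T$ to a single row $R$ at height $n-1$, and the accompanying cost and space estimates — already lives inside Lemma~\ref{lem:trapezoidmove}; this statement merely repackages it, trading the ad hoc lower bound $i \geq 23$ for the uniform threshold $n \geq 32$ used elsewhere in the paper. The only points requiring any care are the elementary inequality $\floor{\lg i} \leq 4$ for $i \leq 31$, and the observation that $32$ is a safe value to quote (a slightly smaller bound such as $n \geq 28$ would in fact already force $i \geq 23$), chosen here for consistency with Theorem~\ref{thm:2indLARGE}.
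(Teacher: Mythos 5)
Your proof is correct. The key point — that for $n\ge 32$ the constraints $j=n-2$ and $j-i<\floor{\lg i}$ already force $i\ge 23$, so Lemma~\ref{lem:trapezoidmove} applies directly — is exactly right, and your arithmetic (if $i\le 31$ then $\floor{\lg i}\le 4$, so $i\ge j-3=n-5\ge 27$) is sound. The paper's own proof contains the same arithmetic (it derives that $i<23$ would force $n\le 28$), but then phrases the remaining step by appealing to the extension machinery of Lemmas~\ref{lem:extension} and Corollary~\ref{cor:ExtensionOfOptimals} to handle downsets living in $B_{28}$. Under the plain reading of the lemma (in which $n$ is fixed by $j=n-2$), that detour is unnecessary: as you note, the $i<23$ case is simply vacuous once $n\ge 32$, so the direct reduction to Lemma~\ref{lem:trapezoidmove} is cleaner and self-contained. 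Both arguments rest on the same elementary estimate; yours just draws the shorter conclusion from it.
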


\begin{proof}
By Lemma \ref{lem:trapezoidmove} we know such a $D$ is not optimal when $i\geq 23$.  Since $j-i<\floor{\lg i}$ and $i<23$ then $j-i \leq 4$.  Thus, $i\leq 22$ and $j\leq 26$.  Since $j=n-2$ we know $n\leq 28$. If $D$ is not optimal in $B_{28}$ then $D$ is not optimal in $B_n$ for any $n\geq 28$.  If $D$ is optimal in $B_{28}$ then $D$ is not optimal in $B_n$ for $n\geq 32$.
\end{proof}

\begin{comment}
\begin{proof}
Since $j-i<\floor{\lg i}$ and $i<36$ we know that $j-i \leq 5$.  Thus, $i\leq 35$ and $j\leq 40$.  Since $j=n-2$ we know $n\leq 42$. If $D$ is not optimal in $B_{42}$ then $D$ is not optimal in $B_n$ for any $n\geq 42$ by Lemma \ref{lem:extension}.  If $D$ is optimal in $B_{42}$ then by Corollary \ref{cor:ExtensionOfOptimals} $D$ is not optimal in $B_n$ for $n>45$.
\end{proof}
\end{comment}

\begin{lem}\label{lem:FitsInside3}
Suppose that $D$ does not end in stairs and has last corner $(i,j)$ with $j\leq n-3$ and $j-i <\floor{\lg i}$. Then $D$ is not optimal for any $n\geq 30$ or $D=[2,1]$.
\end{lem}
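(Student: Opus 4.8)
The plan is to reduce to the already-established local moves and then run a finite check. By Lemma \ref{lem:trapezoidmove2} (with $R$ placed at heights $j+1$ and $j+2$, which are available since $j \le n-3$), such a downset $D$ is not optimal whenever $i \ge 16$. So it remains to handle the range $i \le 15$. Under the hypothesis $j-i < \floor{\lg i}$ we have $\floor{\lg i} \le 3$ for $i \le 15$, hence $j - i \le 2$, so $j \le i + 2 \le 17$. Combined with $j \le n-3$, this forces no constraint from above, but we do get that $D$ sits inside $B_{j+3} \subseteq B_{20}$ (taking the smallest ground set on which $D$ even makes sense together with the ``$j \le n-3$'' slack). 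More carefully: if $D$ fails to be optimal in $B_m$ for the smallest $m$ with $m \ge j+3$ for which $D$ is a downset satisfying the hypotheses, then by Lemma \ref{lem:extension} it fails to be optimal in $B_n$ for every larger $n$; and if instead $D$ happens to be optimal in that small $B_m$, then since $m \le 20 < n$ and (provided $D \ne [2,1]$) the first corner of $D$ has second coordinate $b \le 17 \le (n+4)-1-4$, Corollary \ref{cor:ExtensionOfOptimals} (iterated, or Lemma \ref{lem:4EmptyRows} directly once $i \ge 6$, and Lemma \ref{lem:iSmall} once $i \le 5$) shows $D$ is not optimal in $B_n$ for $n \ge 30$.

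Concretely I would argue as follows. First dispatch $i \ge 16$ by Lemma \ref{lem:trapezoidmove2}. Next, for $5 \le i \le 15$: if the first corner $(a,b)$ of $D$ has $b \le n-5$, i.e. there are at least four empty rows above $D$, then $i \ge 6$ puts us in the hypotheses of Lemma \ref{lem:4EmptyRows} (when $i \ge 6$) and we are done; the case $i = 5$ with four empty rows is covered by Lemma \ref{lem:iSmall} since $D$ is then not $(2,3,1)$-lex style and, because it fits in $B_{17}$ while $n \ge 30$, it cannot lie in $\mathcal{C}_n$. If instead $b > n - 5$, then since $b \le 17$ we get $n \le 21$, contradicting $n \ge 30$. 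Finally the narrow case $i < 5$ is handled directly: $D$ is not $(2,3,1)$-lex style by hypothesis, $D \ne [2,1]$ by hypothesis, and $D$ fits inside $B_{j+3} \subseteq B_{20}$; since $n \ge 30 > 20$, $D$ cannot belong to $\mathcal{C}_n$ (every partition in $\mathcal{C}_n$ has a part of size roughly $n-5 \ge 25$), so Lemma \ref{lem:iSmall} applies and $D$ is not optimal.

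The main obstacle, as usual in this section, is bookkeeping: making sure the threshold $n \ge 30$ is consistent with the various ``$i \le \text{const}$'' bounds one extracts from $j - i < \floor{\lg i}$ and $j \le n-3$, and in particular confirming that once $i \le 15$ the downset is genuinely small enough (fits in some $B_m$ with $m$ a fixed constant well below $30$) that the membership questions ``is $D$ $(2,3,1)$-lex style?'' and ``is $D \in \mathcal{C}_n$?'' both resolve negatively for all $n \ge 30$. I expect no new ideas are needed beyond Lemmas \ref{lem:trapezoidmove2}, \ref{lem:4EmptyRows}, \ref{lem:iSmall} and \ref{lem:extension}; the proof is essentially a careful case split on the size of $i$ and on how many empty rows sit above $D$.
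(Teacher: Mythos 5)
Your reduction to the range $i\le 15$ via Lemma~\ref{lem:trapezoidmove2}, and your handling of the narrow case $i<5$ (no previous corner, so $D$ sits in a tiny $B_m$ and $D\in\mathcal{C}_n$ is impossible for $n\ge 30$), both match the spirit of the paper and are sound. However, there is a genuine gap for $5\le i\le 15$.

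The assertion ``since $b\le 17$ we get $n\le 21$'' conflates the height $b$ of the \emph{first} corner with the height $j$ of the \emph{last} corner. You correctly derive $j\le 17$, but $b$ can be far larger. When $i\ge 6$, Lemma~\ref{lem:distancevector} only forces a previous corner $(i',j')$ to have $i'<(i-3)/2$; it places no ceiling on $j'$. For instance $D$ could have last corner $(10,12)$ and first corner $(1,n-2)$, satisfying every hypothesis of the lemma, with $b=n-2>n-5$ so that Lemma~\ref{lem:4EmptyRows} does not fire, and certainly $b>17$. Your proof gives no reason such a $D$ is not optimal. The paper closes this case with a direct local move: remove $(i,j)$ and add one cell per level above $j$ in a column $\ge 2$; each added cell costs at most half the previous and has space $\ge 2$, so the move saves cost and gains enough space provided those levels are already populated, forcing any optimal $D$ to have $j'<j+i/2$. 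This bounds the whole downset inside $B_{26}$, after which the extension machinery (Lemma~\ref{lem:extension}, Corollary~\ref{cor:ExtensionOfOptimals}) delivers $n\ge 30$. You need some argument of this flavor, or an alternative that caps $b$; Lemma~\ref{lem:4EmptyRows} alone does not do it.

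Two smaller points: Lemma~\ref{lem:iSmall} is stated for $i<5$, so it does not literally cover $i=5$; fortunately $i=5$ forces no previous corner by Lemma~\ref{lem:distancevector} (since $i'<1$ is impossible), so $b=j\le 7$ and the extension argument finishes that case directly. And for $i\le 15$ the hypothesis $j-i<\floor{\lg i}$ actually gives $j-i\le 2$, not $\le 3$ as you wrote at one point, though this only helps you.
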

\begin{proof}
We know such a $D$ is not optimal for $i\geq 16$.  Thus, for any optimal $D$ $i\leq 15$ so $j-i\leq 3$ and $j\leq 18$. If there is no previous corner then $D$ fits inside $B_{19}$ so isn't optimal for $n\geq 23$.  If there is a previous corner then we can replace $(i,j)$ with a cell at every level not in the first column so a previous corner $(i',j')$ has to have $j'<j+i/2\leq 18+(15/2)=25.5$. So $D$ fits inside $B_{26}$ and is not optimal for $n\geq 30$.

\begin{comment}
Since $i<20$ and $j-i<\floor{\lg i}$ we know that $j-i\leq 4$. Thus, $i\leq 19$ and $j\leq 23$.  If there is no previous corner then $D$ fits inside $B_{27}$ and hence $D = [2,1]$ or is not optimal for $n\geq 31$ by Lemma \ref{lem:extension} and Corollary \ref{cor:ExtensionOfOptimals}.  If there is a previous corner, note that we can replace $(i,j)$ with one cell at each height and still save on cost.  Thus, any previous corner $(i',j')$ must have that $j'<j+\frac{i}{2}$ so $j'\leq 32$.  So any optimal $D$ fits inside $B_{33}$.  Thus, by Lemma \ref{lem:extension} and Corollary \ref{cor:ExtensionOfOptimals}, $D$ is only optimal if $n\leq 36$.
\end{comment}
\end{proof}

\begin{lem}\label{lem:StairsN-2}
Suppose $D$ is a downset that ends with one short stair with the top stair being $(i,j)$.  Additionally assume $j-i<\floor{\lg i}+1$, $j=n-2$.  If $n\geq 32$ then $D$ is not optimal.
\end{lem}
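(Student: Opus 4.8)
The plan is to mirror the proof of Lemma~\ref{lem:FitsInside1}: use the trapezoid move of Lemma~\ref{lem:trapezoidstairs} to dispose of the ``wide'' case $i\ge 23$, and for the remaining ``narrow'' case $i\le 22$ bound $n$ from above and finish with the extension machinery of Section~\ref{sec:Extensions}.

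First I would split on the width $i$ of the top stair. If $i\ge 23$, then $D$ satisfies exactly the hypotheses of Lemma~\ref{lem:trapezoidstairs} (it ends in one short stair, its last two corners $(i',j')$ and $(i,j)$ have $i=i'-1$ with $j-i<\floor{\lg i}+1$, $i\ge 23$, and $j=n-2$), so $D$ is not optimal and there is nothing more to do.

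Next I would treat $i\le 22$. Here $\floor{\lg i}\le\floor{\lg 22}=4$, so the assumption $j-i<\floor{\lg i}+1$ forces $j-i\le 4$, hence $j\le 26$; since $j=n-2$ this gives $n\le 28$, i.e.\ $D$ fits inside $B_{28}$. If $D$ is not optimal in $B_{28}$, then iterating Lemma~\ref{lem:extension} shows the extension of $D$ is not optimal in $B_n$ for every $n\ge 28$, in particular for $n\ge 32$. If instead $D$ is optimal in $B_{28}$, then $D\neq[2,1]$ (the downset $[2,1]$ has top stair $(1,3)$, for which $j-i=2\not<1=\floor{\lg 1}+1$, so $[2,1]$ does not satisfy our hypothesis), so Corollary~\ref{cor:ExtensionOfOptimals} gives that $D$ is not optimal in $B_{32}$, and one more (iterated) application of Lemma~\ref{lem:extension} shows $D$ is not optimal in $B_n$ for all $n\ge 32$. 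Note that after extension $D$ need no longer end in one short stair nor satisfy $j=n-2$, but this is irrelevant since we only need to conclude non-optimality.

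I expect no serious obstacle here: all of the substantive work, namely constructing and analysing the relevant column/trapezoid move, has already been carried out in Lemma~\ref{lem:trapezoidstairs}, and the only points requiring care are matching the hypotheses of that lemma verbatim in the case $i\ge 23$, getting the elementary estimate $\floor{\lg 22}=4$ right, and verifying that the $[2,1]$ exclusion built into Corollary~\ref{cor:ExtensionOfOptimals} does not interfere.
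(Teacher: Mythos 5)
Your proposal is correct and follows essentially the same route as the paper: Lemma~\ref{lem:trapezoidstairs} handles $i\ge 23$, the bound $\floor{\lg i}\le 4$ forces $n\le 28$ when $i\le 22$, and Lemma~\ref{lem:extension} together with Corollary~\ref{cor:ExtensionOfOptimals} finishes. Your extra check that $D\neq[2,1]$ is a reasonable detail the paper leaves implicit (indeed $[2,1]$ has only one corner, so it cannot end in a short stair at all).
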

\begin{proof}
By Lemma \ref{lem:trapezoidstairs} we know such a $D$ is not optimal if $i\ge 23$.  Suppose $i\le 22$.  Since $j-i<\left\lfloor \lg i \right\rfloor + 1$, $j- i \leq 4$.  So $j\leq 26$ and since $j=n-2$, $n\leq 28$.  If $D$ is not optimal in $B_{28}$ then $D$ is not optimal in $B_n$ for any $n \ge 28$. If $D$ is optimal in $B_{28}$ then $D$ is not optimal in $B_n$ for $n\geq 32$.
\end{proof}

\begin{lem}\label{lem:StairsN-3}
Suppose $D$ is a downset that ends with one short stair with the top stair being $(i,j)$.  Additionally, assume $j-i<\floor{\lg i} + 1$ and $j\leq n-3$.  If $n\geq 30$ then $D$ is not optimal.
\end{lem}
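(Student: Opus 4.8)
The plan is to mirror the proofs of Lemmas~\ref{lem:FitsInside3} and~\ref{lem:StairsN-2}: first use the trapezoidal move of Lemma~\ref{lem:trapezoidstairs2} to cut the problem down to a bounded value of $i$, then show that an optimal downset of this kind must sit inside a fixed poset $B_\ell$, and finally let the extension results of Section~\ref{sec:Extensions} dispose of all large $n$ at once. First observe that $D$ cannot be $(2,3,1)$-lex style: reading off the definition (see Figure~\ref{fig:231lex}), a $(2,3,1)$-lex style downset that ends in exactly one short stair is an initial segment of the lex order on cells, or such a segment with the top cell of its second-to-last column removed, and in either case its top stair sits in row $n-1$ --- contradicting $j\le n-3$. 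Hence Lemma~\ref{lem:trapezoidstairs2} applies and shows that $D$ is not optimal whenever $i\ge 16$; so from now on $i\le 15$, and then $j-i<\floor{\lg i}+1\le\floor{\lg 15}+1=4$, i.e.\ $j\le i+3\le 18$. If $D$ has no corner before $(i,j)$, its only corners are $(i,j)$ and the last corner, both lying in rows $\le j\le 18$, so $D$ fits inside $B_{26}$ and we pass to the last paragraph.

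So suppose $D$ has a corner before $(i,j)$. By Lemmas~\ref{lem:distancevector}, \ref{lem:stairs} and~\ref{lem:DropSizes} an optimal such $D$ is highly constrained: it has only boundedly many corners, all of which --- apart from $(i,j)$ and the last corner --- lie in columns at most $5$, and consecutive corners in the ``close'' part of the staircase drop by at most one row. The engine for the column bound is the following move. Let $(k,m)$ be the corner immediately preceding $(i,j)$; since $D$ ends in exactly one short stair, $k\le i-3$, and hence column $k+1$ of $D$ reaches exactly row $j$. If $m\ge j+\lceil i/2\rceil$, set
\[
  D' = D-\{(i,j)\}+\bigl\{(k+1,\,j+t):1\le t\le\lceil i/2\rceil\bigr\}.
\]
This is a downset (columns $1,\dots,k$ reach row $m\ge j+\lceil i/2\rceil$, and deleting the maximal cell $(i,j)$ does not touch column $k+1<i$), it has space at least $\lceil i/2\rceil(k+1)\ge i$, its cost is $\sum_{t=1}^{\lceil i/2\rceil}2^{n-j-1-t}<2^{n-j-1}$, which by Lemma~\ref{lem:cost} is the cost of $(i,j)$ (here $i\ge 4$), and $D'<_L D$; hence $D$ is not optimal. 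Consequently an optimal $D$ has $m\le j+\lceil i/2\rceil-1\le 25$; combining this with the one-row drop bounds on the remaining (``close'') corners coming from Lemma~\ref{lem:DropSizes} shows that the top row of $D$ is at most $25$, so $D$ fits inside $B_{26}$.

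With $D$ inside $B_{26}$ the end is routine. If $D$ is not optimal in $B_{26}$, iterating Lemma~\ref{lem:extension} shows it is not optimal in $B_n$ for all $n\ge 26$, in particular for $n\ge 30$. If $D$ is optimal in $B_{26}$, then since $D$ ends in a short stair it has at least two corners, so $D\neq[2,1]$, and Corollary~\ref{cor:ExtensionOfOptimals} gives that $D$ is not optimal in $B_{30}$; one more application of Lemma~\ref{lem:extension} finishes the job for all $n\ge 30$.

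The delicate point is the second paragraph --- pinning down the vertical extent of an optimal $D$ that has several corners. Lemma~\ref{lem:trapezoidstairs2} together with the trading move above handles the top-stair transition cleanly, but one has to combine these with the drop-size lemma of Section~\ref{sec:moves} (and, where needed, the ``four empty rows'' argument of Lemma~\ref{lem:4EmptyRows}) to control all of the earlier corners, and one must keep the arithmetic tight enough to land inside $B_{26}$ rather than a larger poset, since that is exactly what pushes the threshold down to $n\ge 30$.
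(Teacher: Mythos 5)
Your overall strategy is the paper's: Lemma~\ref{lem:trapezoidstairs2} reduces to $i\le 15$, hence $j\le 18$; a trading move bounds the height of the downset so that it fits inside $B_{26}$; and Lemma~\ref{lem:extension} together with Corollary~\ref{cor:ExtensionOfOptimals} finishes for $n\ge 30$. The first and third steps are fine (and your observation that $j\le n-3$ already rules out $(2,3,1)$-lex style, so that Lemma~\ref{lem:trapezoidstairs2} applies, is a hypothesis check the paper leaves implicit). The gap is in the middle step. Your move bounds only $m$, the height of the corner \emph{immediately} preceding $(i,j)$; the top row of $D$ is the height of the \emph{first} (leftmost) corner, which may be much larger. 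You appeal to Lemma~\ref{lem:DropSizes} to control the earlier drops, but that lemma only applies to corners occupying the specific patterns $(a,b),(a+1,c),(a+2,d)$ or $(a,b),(a+2,c)$; a downset with corners, say, $(1,b''),(2,m),(i,j),(i+1,j'')$ with $m\le 25$ but $b''$ huge is touched by none of Lemmas~\ref{lem:distancevector}, \ref{lem:stairs}, \ref{lem:DropSizes}, nor by your move. Lemma~\ref{lem:4EmptyRows} does not close the gap either: it forces the first corner of an optimal downset to sit \emph{near} row $n-1$, which is the opposite of the bound you need.

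The fix is small and is what the paper does: run the trade against an arbitrary previous corner, in particular the leftmost one. If some previous corner has height at least $j+8$, then every height $h\in\{j+1,\dots,j+8\}$ is reached by $D$, and one may delete $(i,j)$ and add, at each such $h$, the single cell immediately to the right of the rightmost cell of $D$ at height $h$. These eight cells keep $D$ a downset, each lies in a column between $2$ and $i-2$, so their total space is at least $16>i$, their total cost is $\sum_{t=1}^{8}2^{n-j-1-t}<2^{n-j-1}=C\bigl((i,j)\bigr)$, and the resulting downset is earlier in lex order. Hence every corner of an optimal $D$ has height at most $j+7\le 25$, so $D\subseteq B_{26}$, after which your final paragraph goes through verbatim.
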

\begin{proof}
By Lemma \ref{lem:trapezoidstairs2} we know such a downset is not optimal if $i\geq 16$. Since $j < \floor{\lg i} + i + 1$ and $i\le15$ we know  $j\le 18$.  If there is no previous corner then $D$ fits inside $B_{19}$.  Suppose there is a previous corner.  Note that we can replace $(i,j)$ with one cell at each height and still save on cost.  Since $i<16$ if we have a previous corner $(i',j')$ with $j'\geq j+8$ then there is a downset $D$ with lower cost, at least as much space, and is earlier in lex order.  Thus, $j'< 18+8 = 26$ and $D$ fits inside $B_{26}$. By Lemmas \ref{lem:extension} and \ref{cor:ExtensionOfOptimals} we know $D$ is not optimal if $n\geq 30$. 
\end{proof}

\section{Proof of Theorem}
\label{sec:proof}
 \begin{prop}\label{prop:stairs}
Suppose $\cH \in \cH(n,e)$ is not $(2,3,1)$-lex style.  Let $D = D(\cH)$ and suppose $D$ ends in stairs.  If $\cH$ is optimal then $n< 32$.
\end{prop}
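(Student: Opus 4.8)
The plan is to run the downset $D=D(\cH)$ through the local-move lemmas of Sections~\ref{sec:moves} and \ref{sec:narrow} and show that every non-$(2,3,1)$-lex-style shape compatible with ``ending in stairs'' is excluded once $n\ge 32$. Since $i_2(\cH)$ is determined by $D$ (via the cost function of Lemma~\ref{lem:cost}), optimality of $\cH$ means $D$ is an optimal downset, so each lemma of the form ``if $D$ has [such a shape] then $D$ is not optimal'' directly rules out that shape. The first step is to identify how $D$ ends: as $D$ ends in stairs, the last entry of its horizontal distance vector is $1$ or $2$, so by Lemmas~\ref{lem:distancevector} and \ref{lem:stairs} an optimal $D$ ending in stairs must end in two short stairs, one short stair, or one long stair; and since $D$ is not $(2,3,1)$-lex style, Lemma~\ref{lem:EndsWithStairs} kills the first and the last, leaving $D$ ending in exactly one short stair.

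Write $(i,j)$ for the top stair (second-to-last corner) of $D$. Next I would extract two facts. First, Lemma~\ref{lem:OneShortiSmall} forces $i\ge 6$, since otherwise $D$ is not optimal. Second, $j\le n-2$: if $j=n-1$ then, because $n-1$ is the largest second coordinate in $B_n$, the downset property makes every column of index at most $i$ complete, and the last corner $(i+1,j')$ caps the picture, so $D=\{(a,b)\in B_n: a\le i\}\cup\{(i+1,b)\in B_n: b\le j'\}$ is an initial segment of the lex order on $B_n$; then $D$ would be $(2,3,1)$-lex style, contradicting the hypothesis.

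The remainder is a dichotomy on the height $j-i$ of the top stair's column. In the tall case $j-i\ge\lfloor\log_2 i\rfloor+1$: since $i\ge 6$, $D$ ends in one short stair, and $D$ is not $(2,3,1)$-lex style, Corollary~\ref{cor:TallStairs} says $D$ is not optimal, a contradiction, so this case simply cannot occur. In the short case $j-i<\lfloor\log_2 i\rfloor+1$, I would split on where the stair sits: if $j=n-2$, Lemma~\ref{lem:StairsN-2} shows $D$ is not optimal whenever $n\ge 32$, forcing $n<32$; if $j\le n-3$, Lemma~\ref{lem:StairsN-3} shows $D$ is not optimal whenever $n\ge 30$, which is stronger still. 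Because $j\le n-2$ has already been established these subcases are exhaustive, and in every surviving case $n<32$.

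I do not expect a serious obstacle here: all the real work lives in the cited lemmas, and the proof of the proposition is case assembly. The only place a genuine (if very short) argument is needed is the observation that $j=n-1$ would make $D$ an initial segment of the lex order and hence $(2,3,1)$-lex style --- this is exactly what prevents the $j=n-1$ possibility from slipping between Lemma~\ref{lem:StairsN-2} ($j=n-2$) and Lemma~\ref{lem:StairsN-3} ($j\le n-3$). Beyond that, the thing to be careful about is matching the numeric thresholds ($i\ge 6$ for Corollary~\ref{cor:TallStairs}; $n\ge 32$ versus $n\ge 30$ for the two stair lemmas) so that the conclusion $n<32$ comes out uniformly.
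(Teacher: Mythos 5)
Your proposal is correct and follows essentially the same case decomposition as the paper's own proof: reduce via Lemmas~\ref{lem:stairs} and \ref{lem:EndsWithStairs} to the one-short-stair case, force $i\ge 6$ via Lemma~\ref{lem:OneShortiSmall}, dispose of the tall column via Corollary~\ref{cor:TallStairs}, and then split $j\in\{n-1\}$ (gives $(2,3,1)$-lex style), $j=n-2$ (Lemma~\ref{lem:StairsN-2}), and $j\le n-3$ (Lemma~\ref{lem:StairsN-3}). The only cosmetic difference is that you establish $j\le n-2$ up front and spell out why $j=n-1$ forces a lex-initial downset, a detail the paper leaves implicit; that added justification is sound.
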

\begin{proof}
The cases for this proof are outlined in Figure \ref{fig:stairs}. By Lemma \ref{lem:stairs}, $D$ ends in one short stair, two short stairs, or one long stair.  By Lemma \ref{lem:EndsWithStairs}, no optimal $D$ ends in two short stairs or one long stair.  

Suppose that $D$ ends in one short stair and let $(i,j)$ be the second to last corner.  If $i<6$ then $D$ is not optimal by Lemma \ref{lem:OneShortiSmall}.  Suppose that $i\geq 6$.  If $j-i\ge \floor{\lg i} + 1$ then $D$ is not optimal by Corollary \ref{cor:TallStairs}.  

So, suppose $j-i<\floor{\log_2(i)} + 1$.  If $j=n-1$ then $D$ is $(2,3,1)$-lex style.  If $j=n-2$  then $n< 32$ by Lemma \ref{lem:StairsN-2}.   Finally,  if $j\le n-3$ then $n<30$ by Lemma \ref{lem:StairsN-3}.  Therefore, if such an $\cH$ is optimal then $n<32$.
\end{proof}

%\vspace{1in}
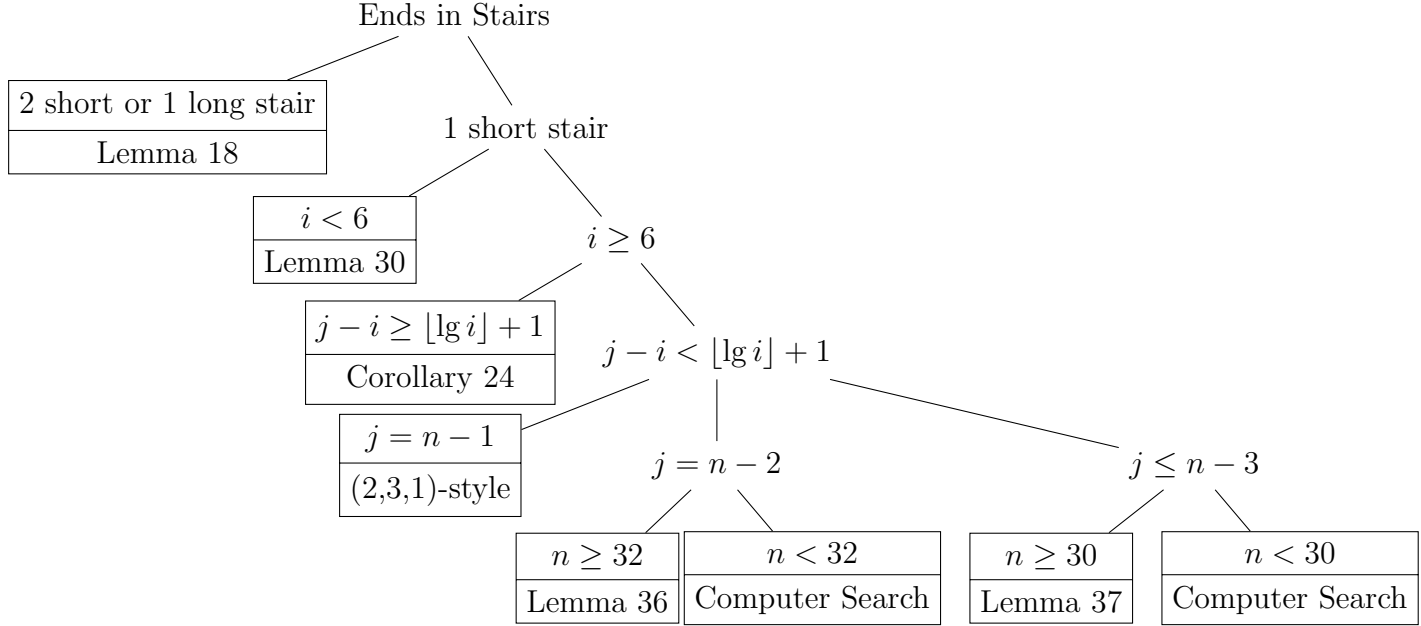
\begin{figure}
	\begin{center}
		\begin{tikzpicture}[
				level 1/.style={sibling distance=3in},
				level 2/.style={sibling distance=2in},
				level 3/.style={sibling distance=1in},
				level 4/.style={sibling distance=0.5in},
				level 5/.style={sibling distance=1.25in}
			]
			\node {Ends in Stairs}
			[
			 leaf/.style={rectangle split, rectangle split parts=2, rectangle split horizontal=false,draw}
			] % Applies to all children
			child {
				node[leaf] {$2$ short or $1$ long stair\nodepart{two}Lemma \ref{lem:EndsWithStairs}}
			}
			child[sibling distance=.75in] {
				node {$1$ short stair}
				child { node[leaf] {$i<6$\nodepart{two}Lemma \ref{lem:OneShortiSmall} }
					  }
				child[sibling distance=1in] { node {$i\ge 6$}
						child[sibling distance=2in] { node[leaf] {$j-i\ge \floor{\lg i} + 1$\nodepart{two}Corollary \ref{cor:TallStairs} } }
						child { node {$j-i<\floor{\lg i} + 1$} 
								child[sibling distance=1.5in] { node[leaf] {$j=n-1$\nodepart{two}(2,3,1)-style} }
								child[sibling distance = 2in] { node {$j=n-2$} 
									child { node[leaf] {$n\geq 32$\nodepart{two}Lemma \ref{lem:StairsN-2}}}
									child[sibling distance=1in]{ node[leaf] {$n<32$\nodepart{two}Computer Search}
								}
								}
								child[sibling distance = 2.5in] {node {$j\le n-3$} 
									child[sibling distance = 1.5in] {node[leaf] {$n\geq 30$\nodepart{two}Lemma \ref{lem:StairsN-3}}}
									child[sibling distance = 1in] {node[leaf] {$n<30$\nodepart{two}Computer Search}}
									}}
							  }
			};
		\end{tikzpicture}
	\end{center}
	\caption{The cases in the proof of Proposition \ref{prop:stairs}. Here $(i,j)$ is the top stair.}
	\end{figure}

 \begin{prop}\label{prop:nostairs}
Suppose $\cH \in \cH(n,e)$ is not $(2,3,1)$-lex style.  Let $D = D(\cH)$ and suppose $D$ does not end in stairs.  If $\cH$ is optimal then $D \in \mathcal{P}_n$ or $n<32$.
\end{prop}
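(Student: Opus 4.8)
The plan is to run a case analysis on the last corner $(i,j)$ of $D$, mirroring the structure of the proof of Proposition~\ref{prop:stairs}, and to appeal to the local-move results of Sections~\ref{sec:moves}--\ref{sec:UBonN} to eliminate every configuration except the members of $\mathcal{C}_n$ (equivalently, the hypergraphs with $\partial_2(\cH)\in\mathcal{P}_n$) and those that force $n<32$; an accompanying case tree, analogous to the figure for Proposition~\ref{prop:stairs}, would record the branches.

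First I would handle the narrow case $i<5$. If $n<10$ there is nothing to prove since $n<32$. If $n\ge 10$, then $D$ does not end in stairs, is not $(2,3,1)$-lex style, and has last corner with first coordinate $<5$, so Lemma~\ref{lem:iSmall} applies: $D$ is not an optimal downset unless $D\in\mathcal{C}_n$. Since $\cH$ is optimal, $D\in\mathcal{C}_n$, hence $\partial_2(\cH)\in\mathcal{P}_n$, and we are done.

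Now assume $i\ge 5$ and split on the height $j-i$ of the last column relative to $\floor{\lg i}$. If $j-i\ge\floor{\lg i}$, then Corollary~\ref{cor:Tall} (which covers both the no-previous-corner and previous-corner subcases via Lemmas~\ref{lem:ColumnMoveNoOC} and~\ref{lem:ColumnMoveEarlyOC}) says $D$ is not optimal, contradicting optimality; so this branch is empty. Hence $j-i<\floor{\lg i}$, and I split on $j$. If $j=n-1$: in a downset the column tops are non-increasing in the column index, so column $i$ reaching the maximal height $n-1$ forces columns $1,\dots,i$ to be full, and since $(i,n-1)$ is the last corner there can be no nonempty column beyond column $i$ (else the top of the last such column would be a maximal element with first coordinate exceeding $i$); thus $D$ consists of the first $i$ full columns, i.e. $D$ is an initial segment of the lex order on cells, so $D$ is $(2,3,1)$-lex style, contrary to hypothesis. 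If $j=n-2$: Lemma~\ref{lem:FitsInside1} shows $D$ is not optimal once $n\ge 32$, so optimality of $\cH$ forces $n<32$. If $j\le n-3$: Lemma~\ref{lem:FitsInside3} shows $D$ is not optimal once $n\ge 30$ unless $D=[2,1]$, and $[2,1]\in\mathcal{C}_n$, so $\partial_2(\cH)\in\mathcal{P}_n$; this completes the branch. These cases are exhaustive.

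Since all the substantive work is already done in the local-move lemmas and the ``fits inside'' lemmas, the proposition is essentially an assembly step and I do not expect a real obstacle. The only points needing care are: verifying that the $j=n-1$ subcase genuinely yields a lex-initial segment (done above from monotonicity of column tops in a downset); keeping the split $i<5$ versus $i\ge 5$ consistent with the hypothesis $i\ge 5$ of Corollary~\ref{cor:Tall} and the hypothesis $n\ge 10$ of Lemma~\ref{lem:iSmall}; and confirming that the only non-$(2,3,1)$-lex-style exceptions produced---those from Lemma~\ref{lem:iSmall} and the single value $D=[2,1]$ from Lemma~\ref{lem:FitsInside3}---all lie in $\mathcal{C}_n$, so that $\partial_2(\cH)\in\mathcal{P}_n$ in every surviving case.
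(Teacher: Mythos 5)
Your proposal is correct and follows essentially the same route as the paper: the same case analysis on the last corner $(i,j)$, invoking Lemma~\ref{lem:iSmall} for $i<5$, Corollary~\ref{cor:Tall} for $j-i\ge\floor{\lg i}$, and Lemmas~\ref{lem:FitsInside1} and~\ref{lem:FitsInside3} for the remaining heights of $j$. Your only departures are cosmetic --- pulling the $i<5$ branch out front rather than nesting it under $j-i\ge\floor{\lg i}$, and spelling out why $j=n-1$ forces a lex initial segment, which the paper merely asserts.
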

\begin{proof}
Let $(i,j)$ be the last corner of $D$.  Suppose that $j-i\ge\floor{\lg i}$.  When $i\geq 5$, Corollary \ref{cor:Tall} tells us that $D$ is not optimal.  If $i<5$ then, by Lemma \ref{lem:iSmall}, $D\in \mathcal{P}_n$, $D$ is not optimal, or $n<10$.

Now suppose that $j-i<\floor{\lg i}$. If $j = n-1$ then $D$ must be $(2,3,1)$-lex style.  If $j=n-2$ then by Lemma \ref{lem:FitsInside1}, $n<32$. Finally, if $j\leq n-3$ then $n\le 30$ or $D\in\mathcal{P}_n$ by Lemma \ref{lem:FitsInside3}.
\end{proof}

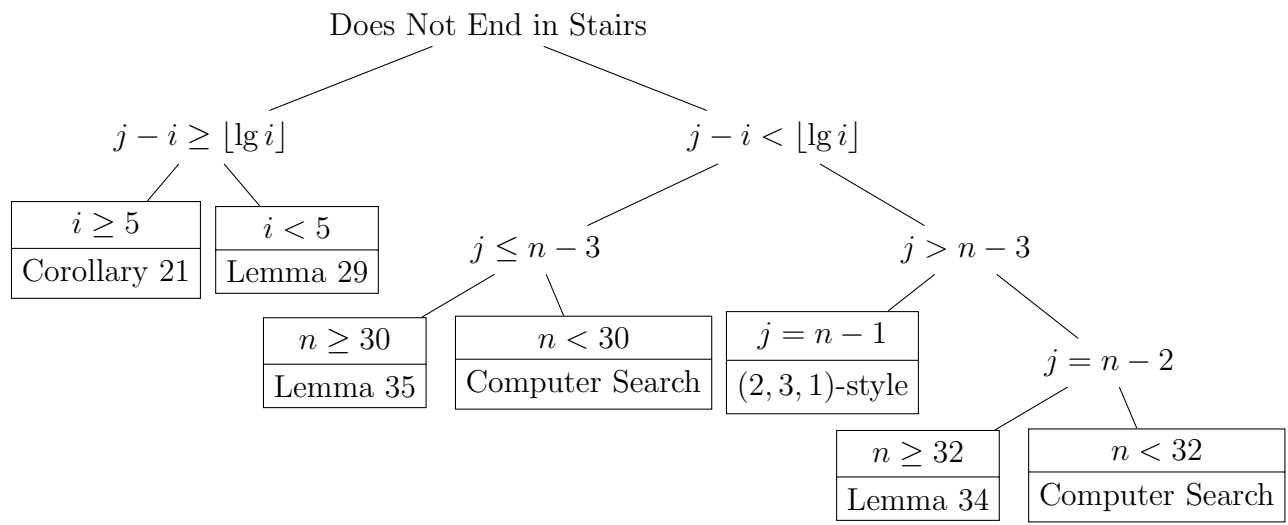
\begin{figure}	
\begin{center}
		\begin{tikzpicture}[
				level 1/.style={sibling distance=3in},
				level 2/.style={sibling distance=2in},
				level 3/.style={sibling distance=.5in},
				level 4/.style={sibling distance=0.5in},
			]
			\node {Does Not End in Stairs}
			[
			 leaf/.style={rectangle split, rectangle split parts=2, rectangle split horizontal=false,draw}
			] % Applies to all children
			child {
				node {$j-i  \ge \floor{\lg i}$}
				child[sibling distance=1in] { node[leaf] {$i\ge 5$\nodepart{two}Corollary \ref{cor:Tall}} }
				child[sibling distance=1in] { node[leaf] {$i<5$\nodepart{two}Lemma \ref{lem:iSmall}} }											   
					  						}
			child[sibling distance=3in] {
				node {$j-i<\floor{\lg i}$}
				child[sibling distance = 2.5in] { node {$j\le n-3$}
							child[sibling distance = 2in]{ node[leaf] {$n\geq 30$\nodepart{two} Lemma \ref{lem:FitsInside3}}}
							child{ node[leaf] {$n<30$\nodepart{two} Computer Search}}
						}
					%  }
				child[sibling distance=2in] { node {$j>n-3$}
						child[sibling distance = 1.5in] { node[leaf] {$j=n-1$\nodepart{two}$(2,3,1)$-style} }
						child[sibling distance = 1.5in] { node {$j=n-2$} 
								child[sibling distance=2in] { node[leaf] {$n\geq 32$\nodepart{two}Lemma \ref{lem:FitsInside1}} }
								child { node[leaf] {$n<32$\nodepart{two} Computer Search } }
							  }
					  }
			};
		\end{tikzpicture}
	\end{center}
\caption{The cases in the proof of Proposition \ref{prop:nostairs}. Here $(i,j)$ is the last corner.}
\end{figure}

%There are exactly 5 persistent counterexamples ($3$-graphs that appear for every $n$ as maximizers of $i_{2}$, but are not $(2,3,1)$-lex style). They have the following shadow graphs:
%\begin{multicols}{2}
%	\begin{itemize}
%	\item $(K_3\vee E_1) \cup E_{n-4} = K_4$
%	\item $(K_2\vee E_{n-5})\cup E_3$
%	\item $(K_2\vee E_{n-4})\cup E_2$
%	\item $(K_3\vee E_{n-4})\cup E_1$
%	\item $(K_4\vee E_{n-5}) \cup E_1$
%	\end{itemize}
%\end{multicols}

\noindent\emph{Proof of Theorem \ref{thm:2indLARGE}.} When $n\geq 32$ we find the only optimal $3$-graphs are $(2,3,1)$-lex style or are in $\mathcal{P}_n$ by Propositions \ref{prop:stairs} and \ref{prop:nostairs}. When $n<32$ we find all hypergraphs that maximize $2$-independent sets using a computer search which leads us to $(2,3,1)$-lex style graphs or those with shadow graphs shown in Table \ref{table:exceptions}.\hfill $\square$

\section{Conclusion}

We have found the maximum number of $s$-independent sets in $n$ vertex $3$-uniform hypergraphs with $e$ edges for all possible $n,e$ and $s$. While the answer is straightforward for $s=1$ and $s=3$, the answer for $s=2$  requires a generalization of lex and colex graphs to $\pi$-lex graphs. Sadly the result is not as straightforward as saying that the optimal hypergraphs are $(2,3,1)$-lex initial segments. Even the generalization to $(2,3,1)$-lex style doesn't cover all the cases. There are both transient and persistent exceptions that are not $(2,3,1)$-lex style. 

It still seems to us possible that asymptotically we can give a good characterization of the $r$-graph on $n$ vertices having $e$-edges having the fewest $s$-independent sets. The following conjecture is a strengthened version of the main theorem (Theorem 5) of \cite{HypergraphIndSets}.

\begin{conj}
    Fix $1\le s\le r$ and $\eta>0$. Let $\cH$ be a hypergraph on $n$ vertices with $e$ edges (where $e < (1-\eta) \binom{n}r$) having the maximum number of $s$-independent sets. Let $\cP(e)$ be the initial segment of $\binom{[n]}r$ in the $(r-s+1,r-s+1,\dots,r,1,2,\dots,s)$-lex order. Then
    \[
        i_s(\cH) \le (1+o(1)) i_s(\cP(e)).
    \]
\end{conj}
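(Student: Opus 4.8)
The plan is to reduce the conjecture to a statement about $s$-uniform shadows and then carry out, at an asymptotic level, a coarsening of the strategy behind Theorem~\ref{thm:2indLARGE}.

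\emph{Reformulation.} For any hypergraph $\cH$ and any $I$, the set $I$ fails to be $s$-independent exactly when some $s$-subset of $I$ lies in an edge of $\cH$; so, writing $\partial_s\cH=\setof{S\in\binom{[n]}{s}}{S\of F\text{ for some }F\in\cH}$ for the $s$-uniform shadow, we have $i_s(\cH)=i_s(\partial_s\cH)$, where the right side counts the sets containing no edge of the $s$-graph $\partial_s\cH$. Each edge $F\in\cH$ contributes to $\partial_s\cH$ a copy of the complete $s$-graph $K^{(s)}_r$ on $F$, and conversely, if an $s$-graph $\cF$ contains the vertex sets of $e$ copies of $K^{(s)}_r$ then $\cF\supseteq\partial_s\cH$ for the $e$-edge $r$-graph $\cH$ formed by those vertex sets. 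Exactly as in the lemma opening Section~\ref{sub:our_problem}, this yields
\[
 \max_{\cH\in\cH_r(n,e)} i_s(\cH)\;=\;\max\setof[\big]{i_s(\cF)}{\cF\text{ an }s\text{-graph on }[n]\text{ with at least }e\text{ copies of }K^{(s)}_r},
\]
so the conjecture is equivalent to: among $s$-graphs on $[n]$ carrying at least $e$ copies of $K^{(s)}_r$, the maximum number of edge-free sets is $(1+o(1))\,i_s(\partial_s\cP(e))$.

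\emph{Shifting and the $\pi$-lex profile.} The shift of Lemma~\ref{lem:shift} and Corollary~\ref{cor: shifted} work for all $r$ and $s$, so we may assume $\cH$, hence $\partial_s\cH$, is shifted; then the edge-free count is controlled, via the $(r,s)$ analogue of Lemma~\ref{lem:Sindcondition}, by which $\comple$-minimal $s$-sets are edges of $\partial_s\cH$, and the problem is again an optimisation over a downset-like object. The first genuinely new ingredient is a structural lemma: the $s$-shadow of an initial segment of the $(r-s+1,\dots,r,1,\dots,r-s)$-lex order grows, up to $o(\abs{\partial_s\cP(e)})$ many edges, as an initial segment of lex on $\binom{[n]}{s}$. (This generalises the observation that the $2$-shadow of a $(2,3,1)$-lex initial segment is an initial segment of lex on pairs.) Granting it, $i_s(\cP(e))=i_s(\partial_s\cP(e))=(1+o(1))\,i_s\bigl(\cL_s(n,f^{*})\bigr)$ by Theorem~\ref{thm:Sind}, where $f^{*}=\abs{\partial_s\cP(e)}$ is an explicit function of $n$ and $e$; and $i_s(\cL_s(n,f))$ has a clean closed form in the relevant range (for instance $2^{n}-2^{\,n-s+1}(1-2^{-f})$ when $f\le n-s+1$), which is the quantity one must now meet from above.

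\emph{The matching upper bound.} The naive chain $i_s(\cH)=i_s(\partial_s\cH)\le i_s\bigl(\cL_s(n,\abs{\partial_s\cH})\bigr)\le i_s\bigl(\cL_s(n,f_{\min})\bigr)$, with $f_{\min}=\abs{\partial_s\cC_r(n,e)}$ from Kruskal--Katona, is \emph{not} sharp: the $s$-shadow of an $r$-graph can never be as lex-like as a small lex $s$-graph (when $s<r$ a single $r$-edge already forces into the shadow an $s$-set not containing the vertex $0$), so this bound badly over-counts precisely for the colex-like $\cH$ that minimise $\abs{\partial_s\cH}$. What must be quantified is the tension between a small shadow (good for the count) and a shadow that is spread out rather than lex-like (bad for the count), and one must show the optimal trade-off is realised, to within $1+o(1)$, at $f^{*}$. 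For $e=\Theta\!\bigl(\binom{n}{r}\bigr)$ I would follow \cite{HypergraphIndSets}: apply hypergraph regularity to $\cH$, pass to a bounded weighted ``reduced'' $r$-graph, express $i_s(\cH)$ up to $1+o(1)$ through it, and solve the resulting finite optimisation, verifying its optimiser is the reduced profile of $\cP(e)$. For $e=o\!\bigl(\binom{n}{r}\bigr)$ a direct argument should do: track the shadow chain $\partial_s\cH=\partial_s\partial_{s+1}\cdots\partial_r\cH$, combining Kruskal--Katona with a spreadness estimate at each level to bound how lex-like $\partial_s\cH$ can be in terms of $\abs{\partial_s\cH}$, then optimise over that size using the closed form above.

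\emph{Main obstacle.} The hard part is the upper bound: obtaining a bound on $i_s(\cH)$ sensitive to the \emph{shape} of $\partial_s\cH$, not merely its size. Generalising the local-move/downset analysis of Sections~\ref{sec:moves}--\ref{sec:proof} to arbitrary $r,s$ turns into a high-dimensional bookkeeping problem and, already for $(r,s)=(3,2)$, does not yield a clean extremal family --- which is exactly why the conjecture only asks for $1+o(1)$. The regularity route instead forces one to reconcile the multiplicative errors of the regularity lemma with a genuine $(1+o(1))$ \emph{uniformly} for all $e<(1-\eta)\binom{n}{r}$; this is delicate both when $e$ is small (the reduced structure is near-trivial and the regularity error dominates) and when $e\to(1-\eta)\binom{n}{r}$ (where $i_s$ itself is small). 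Bridging these two regimes, and proving the structural lemma about $\partial_s\cP(e)$ for general $r$ and $s$, are the points I expect to require the most work.
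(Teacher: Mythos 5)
This statement is labelled a \emph{Conjecture} in the paper, and the authors do not prove it; they explicitly remark that only the cases $(r,s)=(3,2)$, $s=r$, and $s=1$ are settled (the first by their main theorem, the second by Theorem~\ref{thm:Sind}, the third by Lemma~\ref{lem:IsolatesAre1ind}), while the general case is left open. So there is no paper proof to compare against, and no proof sketch, however plausible, should be judged as matching or diverging from one.

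That said, your write-up is candid that it is a proposal rather than a proof, and on its own terms it is reasonable: the reformulation $i_s(\cH)=i_s(\partial_s\cH)$ and the reduction to ``$s$-graphs on $[n]$ carrying at least $e$ copies of $K^{(s)}_r$'' is a correct and faithful generalisation of the paper's $\partial_2$/triangle-count lemma from Section~\ref{sub:our_problem}, and invoking shiftedness (Lemma~\ref{lem:shift}, Corollary~\ref{cor: shifted}) to pass to a downset-like structure is exactly the paper's machinery. Where this stops being a proof is precisely where you say it does. The ``structural lemma'' that $\partial_s\cP(e)$ is lex-like up to lower-order terms is asserted but neither proved nor even precisely formulated for general $(r,s)$; for $(3,2)$ it encodes the ``lexish'' phenomenon, but there is no argument here that the analogous discrepancy is $o(\abs{\partial_s\cP(e)})$ for all $r,s$ and all $e<(1-\eta)\binom{n}{r}$. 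The upper bound is the genuine content of the conjecture, and your proposal correctly identifies that the naive chain through Kruskal--Katona and $\cL_s$ is not sharp, but it replaces the missing argument with a wish-list: ``quantify the tension'' between small shadow and lex-like shadow, run a regularity argument for $e=\Theta(\binom{n}{r})$, run an ad hoc shadow-chaining argument for $e=o(\binom{n}{r})$, and glue the regimes. None of these steps is carried out, and the last one (a \emph{uniform} $1+o(1)$ across all $e<(1-\eta)\binom{n}{r}$, reconciling regularity errors with the shrinking size of $i_s$ near the density threshold) is, as you note, exactly the part that would make this a theorem rather than a conjecture. One smaller point worth flagging: the permutation in the conjecture as printed in the paper, $(r{-}s{+}1,r{-}s{+}1,\dots,r,1,2,\dots,s)$, is garbled; you silently corrected it to $(r{-}s{+}1,\dots,r,1,\dots,r{-}s)$, which is the version consistent with $(2,3,1)$ at $(r,s)=(3,2)$, and a complete write-up should make that correction explicit.
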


The case $r=3, s=2$ is a consequence of our main theorem. For all $r$ the cases $s=r$ and $s=1$ are also proved. The case $s=r$ is a special case of Theorem~\ref{thm:Sind}. The case $s=1$ is true because the argument of Lemma~\ref{lem:IsolatesAre1ind} applies equally well to the initial segments in $(r,1,2,\dots,r-1)$-lex.

One open problem to consider, which is probably very hard, is the level sets problem. For instance, one could try to determine which $3$-uniform hypergraph with $n$ vertices and $m$ edges maximizes the number of $2$-independent sets of size $t$. Our result doesn't answer this question. As an example, we know that for $12$ vertices and $10$ edges, the $(2,3,1)$-lex hypergraph maximizes the number of $2$-independent sets in total, however, this graph does not maximize the $2$-independent sets of size $2$ (at the very least the colex graph does better).

% For any $r$, maximizing $1$-independent sets and $r$-independent sets in $r$-uniform hypergraphs is straightforward. The ``middle" cases are more interesting. As shown in the introduction, the case of $r=3$ and $s=2$, is equivalent to maximizing independent sets in an $n$ vertex graph that has at least $m$ triangles. We show this by considering the downset of any hypergraph $\cH$. Phrased differently, we are minimizing the size of the total(?) upshadow of the downshadow of $\cH$. As a similar Kruskal-Katona type question, one could ask, which family of $r$-sets has the smallest $i^{th}$ upshadow of the $j^{th}$ downshadow. Not sure how to phrase this....

\bibliographystyle{amsplain}
\bibliography{Max2Indsets}

\end{document}